\newtheorem{theorem}{Theorem}[section]
\newtheorem{lemma}[theorem]{Lemma}
\newtheorem{conjecture}[theorem]{Conjecture}
\newtheorem{corollary}[theorem]{Corollary}
\newtheorem{proposition}[theorem]{Proposition}
\theoremstyle{definition}
\newtheorem{definition}[theorem]{Definition}
\theoremstyle{remark}
\newtheorem{remark}[theorem]{Remark}
\numberwithin{equation}{section}
\begin{document}

\title[VARIETIES OF INTERMEDIATE KODAIRA DIMENSION]{ON THE PLURICANONICAL MAPS OF VARIETIES OF INTERMEDIATE KODAIRA DIMENSION }

%    Information for first author
\author{Xiaodong Jiang}
%    Address of record for the research reported here
\address{Department of Mathematics, University of Utah, Salt Lake City, Utah 84112}
%    Current address
%\curraddr{Department of Mathematics and Statistics, Case Western
%Reserve University, Cleveland, Ohio 43403}
\email{jiang@math.utah.edu}
%    \thanks will become a 1st page footnote.
%\thanks{The first author was supported in part by NSF Grant \#000000.}

%    Information for second author
%\author{Author Two}
%\address{Mathematical Research Section, School of Mathematical Sciences,
%Australian National University, Canberra ACT 2601, Australia}
%\email{two@maths.univ.edu.au}
%\thanks{Support information for the second author.}

%    General info
%\subjclass[2000]{Primary 54C40, 14E20; Secondary 46E25, 20C20}

\date{Feb 21, 2011}

%\dedicatory{This paper is dedicated to our advisors.}

%\keywords{Differential geometry, algebraic geometry}

\begin{abstract}
In this paper we will prove a uniformity result for the Iitaka
fibration $f:X\rightarrow Y$, provided that the generic fiber has a
good minimal model and the variation of $f$ is zero or that
$\kappa(X)=\mbox{dim}X-1$.
\end{abstract}

\maketitle

%% The correct journal style for \specialsection is all uppercase; a known bug
%% in amsart.cls prevents this, so input must be uppercase until it is fixed.
%\specialsection*{This is a Special Section Head}
%\specialsection*{THIS IS A SPECIAL SECTION HEAD}
%This is an example of a special section head%
%%%%%%%%%%%%%%%%%%%%%%%%%%%%%%%%%%%%%%%%%%%%%%%%%%%%%%%%%%%%%%%%%%%%%%%%
%\footnote{Here is an example of a footnote. Notice that this
%footnote text is running on so that it can stand as an example of
%how a footnote with separate paragraphs should be written.
%\par
%And here is the beginning of the second paragraph.}%
%%%%%%%%%%%%%%%%%%%%%%%%%%%%%%%%%%%%%%%%%%%%%%%%%%%%%%%%%%%%%%%%%%%%%%%%
\section{Introduction}
One of the main problems in complex projective algebraic geometry is
to understand the structure of pluricanonical maps. Recently, Hacon
and M$^{\rm{c}}$Kernan \cite{hm06}, Takayama \cite{tak06} and Tsuji
\cite{tsu99} have proved a beautiful result stating that there is a
universal constant $r_n$ such that if $X$ is a smooth projective
variety of general type and dimension $n$, then the pluricanonical
map $$\phi_{rK_X}:X\dashrightarrow \mathbb{P}(H^0(X,
\mathcal{O}_X(rK_X)))$$ is birational for all $r\geq r_n$. In
\cite{hm06}, Hacon and M$^{\rm{c}}$Kernan also proposed a related
conjecture for the Iitaka fibration in the case
$\mbox{dim}X>\kappa(X)\geq 0$.

\begin{conjecture}[\mbox{\cite[Conjecture
1.7]{hm06}}]\label{conjecture} Fix $n\in\mathbb{Z}_{>0}$. There is
positive integer $r_n$ with the following property: Let $X$ be a
smooth $n$-dimensional projective variety of non-negative Kodaira
dimension. Then the rational map $\phi_{rK_X}$ is birationally
equivalent to the Iitaka fibration for all sufficiently divisible
integers  $r\geq r_n$.
\end{conjecture}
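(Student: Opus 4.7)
The plan is to reduce Conjecture \ref{conjecture} to a statement on the base of the Iitaka fibration via the canonical bundle formula, and then invoke the known uniformity result for the log general type case. After replacing $X$ by a smooth birational model, let $f:X\to Y$ be the Iitaka fibration with general fiber $F$ of Kodaira dimension zero. By the Fujino--Mori canonical bundle formula there exist a $\mathbb{Q}$-divisor $\Delta_Y$ (the discriminant part) and a nef moduli $\mathbb{Q}$-divisor $M_Y$ on $Y$ such that $(Y,\Delta_Y)$ is klt and
$$K_X \sim_{\mathbb{Q}} f^*(K_Y + \Delta_Y + M_Y).$$
Since $\kappa(X)=\dim Y$, the $\mathbb{Q}$-divisor $K_Y+\Delta_Y+M_Y$ is big on $Y$.

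The second step is to find a positive integer $b=b(n)$, depending only on $n=\dim X$, such that $b(K_Y+\Delta_Y+M_Y)$ is an integral Cartier divisor. Granting such a $b$, restrict to $r$ divisible by $b$ and observe that sections of $\mathcal{O}_Y(r(K_Y+\Delta_Y+M_Y))$ pull back to sections of $\mathcal{O}_X(rK_X)$. Applying the Hacon--M\textsuperscript{c}Kernan--Takayama--Tsuji theorem in its version for klt pairs of log general type to $(Y,\Delta_Y+M_Y)$ would then yield a universal constant $r=r(n)$ for which $\phi_{r(K_Y+\Delta_Y+M_Y)}$ is birational to the Iitaka model of $Y$. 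Pulling these sections back by $f$ shows $\phi_{rK_X}$ is birationally equivalent to $f$ for all sufficiently divisible $r\geq r_n$, which is precisely the statement of the conjecture.

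The main obstacle is the uniform bound on the index. It has two ingredients: (a) an effective universal $m=m(d)$ with $mK_{F^{\min}}\sim 0$ on a good minimal model of the fiber, where $d=\dim F$, and (b) an effective universal bound on the denominators of the moduli $\mathbb{Q}$-divisor $M_Y$ coming from the variation of Hodge structures of the fibers of $f$. Ingredient (a) is expected from the boundedness of minimal Calabi--Yau varieties and is a substantial open problem in general, although known in low dimensions and in the presence of a good minimal model. Ingredient (b) is controlled precisely in the two cases the paper treats: when $\mathrm{Var}(f)=0$, a finite base change reduces $f$ to an isotrivial family, so after passing to a birational model $M_Y$ becomes $\mathbb{Q}$-trivial and its denominator is bounded by $m$ from (a); when $\kappa(X)=\dim X-1$, so that $Y$ is a curve, Fujino's explicit bound on the denominators of the moduli part over a curve supplies the required control. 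These are exactly the hypotheses under which the reduction to log general type can be carried out unconditionally, and thus I expect the author's argument to parallel the three-step outline above with these hypotheses handling Step 2.
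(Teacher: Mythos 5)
The statement you are addressing is Conjecture \ref{conjecture}, which the paper does \emph{not} prove in general; it only establishes the special cases stated as Theorems \ref{main thm} and \ref{n-1 thm}. Your outline correctly identifies the first reduction (the Fujino--Mori canonical bundle formula, $K_X\sim_{\mathbb{Q}}f^*(K_Y+B_Y+M_Y)$ with $K_Y+B_Y+M_Y$ big), but the decisive third step is a genuine gap: there is no ``version for klt pairs of log general type'' of the Hacon--M$^{\rm c}$Kernan--Takayama--Tsuji theorem that you can invoke. Uniform birationality of $\phi_{m(K_Y+B_Y+M_Y)}$ fails without control of the coefficients of $B_Y$ (here they lie in the DCC set $A(b,N)$ of \cite{fm00}) and of the moduli part $M_Y$, and even with that control such a pair statement was not available; producing it in the paper's two settings is precisely the content of Sections 4--6 and 7: construction of covering families of pure log canonical centers following \cite{mck02}, subadjunction via Ambro's theorem on the moduli b-divisor (Theorem \ref{ambro}), passage to the log canonical model via \cite{bchm06} and to a good minimal model via Theorem \ref{lai}, the cutting-down-of-centers argument of Proposition \ref{isolated} with induction on dimension, and then the lower bound on $\mbox{vol}(Y,K_Y+B_Y)$ obtained from log birational boundedness and the DCC of volumes (Theorems \ref{log-bound} and \ref{dcc} of \cite{hmx10}). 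Citing a black-box uniformity theorem for the pair $(Y,B_Y+M_Y)$ assumes exactly what has to be proved.

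There are also concrete inaccuracies in how you handle the two cases. In the case $\kappa(X)=\dim X-1$ the base $Y$ has dimension $n-1$ (it is not a curve); it is the generic fiber that is an elliptic curve, and the control of $M_Y$ comes from the $j$-invariant map and $12M_Y\sim_{\mathbb{Q}}J^*\mathcal{O}_{\mathbb{P}^1}(1)$ (see \cite{ps07}), not from bounds on the moduli part ``over a curve.'' In the case $\mbox{Var}(f)=0$, the paper does not pass to an isotrivial family by base change; it uses the equivalence, under the good minimal model hypothesis for $F$, of $\mbox{Var}(f)=0$ with $M_Y\sim_{\mathbb{Q}}0$ (Theorem \ref{tfae}, via \cite{amb05} and \cite{fuj03}), and the bound $N$ on denominators comes from the Betti number hypothesis (3) of Theorem \ref{main thm}, not from the index of the fiber alone. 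Finally, even granting $M_Y\sim_{\mathbb{Q}}0$ and bounded $N$, the remaining problem of a uniform $m$ for the klt pairs $(Y,B_Y)$ with coefficients in $A(b,N)$ is the heart of the paper and is left untouched by your proposal.
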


The purpose of this paper is to prove Conjecture \ref{conjecture}
under the hypotheses that the Iitaka fibration is isotrivial or that
$\kappa(X)=\mbox{dim}X-1$.

\begin{theorem}\label{main thm}
For any positive integers $n,b,k$, there exists an integer
$m(n,b,k)>0$ such that if $f:X\rightarrow Y$ is the Iitaka fibration
with $X$ and $Y$ smooth projective varieties, $\mbox{\rm{dim}}X=n$,
with generic fiber $F$ of $f$ of Kodaira dimension zero, such that
\begin{enumerate}
\item the variation of $f$ is zero;
\item $F$ has a good minimal model;
\item $b$ is the smallest integer such that $h^0(F,bK_F)\neq 0$, and
$\mbox{\rm{Betti}}_{\rm{dim}(E')}(E')\leq k$, where $E'$ is a smooth
model of the cover $E\rightarrow F$ of the generic fiber $F$
associated to $b$-th root of the unique element of $|bK_F|$;
\end{enumerate}
then the pluricanonical map

$$\phi_{mK_X}:X\dashrightarrow \mathbb{P}H^0(X,\mathcal{O}_X(mK_X))$$
is birationally equivalent to $f$, for any $m\in \mathbb{Z}_{>0}$
such that $m$ is divisible by $m(n,b,k)$.
\end{theorem}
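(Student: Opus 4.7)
The plan is to reduce the question to effective birationality on the Iitaka base $Y$ via the Fujino--Mori--Kawamata canonical bundle formula, and then to exploit isotriviality together with the Betti bound to force all constants to depend only on $n$, $b$, $k$.

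Replacing $X\to Y$ by a suitable birational model (which does not change any pluricanonical system on $X$), I would invoke the canonical bundle formula to produce a klt pair $(Y,B)$ and a ``moduli'' $\mathbb{Q}$-divisor $M$ on $Y$ with
$$K_X\sim_{\mathbb{Q}}f^*(K_Y+B+M),$$
where $K_Y+B+M$ is big because $f$ is the Iitaka fibration. By the choice of $b$ in hypothesis (3), both $bB$ and $bM$ can be taken to be integral Cartier divisors. The theorem then reduces to producing a uniform integer $m(n,b,k)$ such that $|m(K_Y+B+M)|$ defines a birational map for every $m$ divisible by $m(n,b,k)$.

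The crux is to trivialize the moduli part $M$. Hypothesis (1) together with the good minimal model assumption on $F$ implies that $M$ is torsion in $\mathrm{Pic}(Y)\otimes\mathbb{Q}$; the issue is to bound the order of this torsion uniformly. To do so, I would globalize the $b$-th root cover $E\to F$ of hypothesis (3) to a family of $b$-covers over a base change $Y'\to Y$ that trivializes the $b$-cover in families. On $Y'$, the moduli part can then be recovered from a polarized variation of Hodge structure on a subquotient of $H^{\dim E'}(E',\mathbb{Q})$, whose dimension is at most $k$. Isotriviality forces the monodromy of this VHS to factor through a finite subgroup of $\mathrm{GL}(k,\mathbb{Z})$, and by Minkowski's classical bound on such subgroups one obtains a constant $N(k)$ with $bN(k)\cdot M\sim 0$ (possibly after a further bounded base change).

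Once $m$ is divisible by $bN(k)$, the task becomes: find a uniform $m_0$ such that $|m_0(K_Y+B)|$ defines a birational map, where $(Y,B)$ is a klt pair of log general type with $bB$ Cartier and $\dim Y\le n-1$. This is precisely the content of the effective birationality theorem for klt pairs of bounded Cartier index in fixed dimension (Hacon--McKernan and subsequent work), which supplies $m_0=m_0(n-1,b)$. Taking $m(n,b,k)=bN(k)m_0$ finishes the argument. The main obstacle is the middle step: rigorously identifying the torsion order of the moduli part $M$ with a quantity controlled only by $b$ and the top Betti number of $E'$. This requires a careful application of the canonical bundle formula to the relative $b$-cover family and a monodromy argument that turns isotriviality into a concrete finiteness statement for a $\mathrm{GL}(k,\mathbb{Z})$-representation; the two surrounding steps are essentially a bookkeeping exercise in the canonical bundle formula and an appeal to known effective birationality results.
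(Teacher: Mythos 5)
Your reduction via the canonical bundle formula is the right starting point, but the final step rests on a false premise. By Fujino--Mori (Proposition 3.3 / Lemma \ref{vz} in the paper), the boundary part $B_Y$ has coefficients in the set $A(b,N)=\{(bNu-v)/(bNu)\}$ with $u\in\mathbb{Z}_{>0}$ arbitrary: these coefficients satisfy DCC and are bounded below by $1/(bN)$, but their denominators are unbounded, so it is simply not true that $bB$ (or $bNB$) is Cartier. Consequently your appeal to ``effective birationality for klt pairs of bounded Cartier index'' does not apply; and an effective birationality theorem for log pairs with coefficients merely in a DCC set was not an available black box at the time --- it is essentially the hard content that the paper has to prove by hand. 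What the paper actually does (Sections 4--6) is produce, for a very general $y\in Y$, a divisor $D_y\sim_{\mathbb{Q}}\lambda(K_Y+M_Y+B_Y)$ with $\lambda\lesssim \alpha\,\mathrm{vol}(Y,K_Y+M_Y+B_Y)^{-1/n'}+\beta$ and $y$ an isolated non-klt center, by cutting down non-klt centers with M$^{\rm c}$Kernan's covering families and Kawamata/Ambro subadjunction; crucially, the volume of the restriction to a center $V_1$ is bounded below by applying the \emph{inductive hypothesis of the theorem itself} to the restricted Iitaka fibration $W_1\to V_1$ (Lemmas \ref{iitaka}, \ref{restriction}). Then Nadel vanishing gives birationality at $m\sim \mathrm{vol}^{-1/n'}$, and uniformity is extracted by a bootstrap: this bound forces $\mathrm{vol}(Y,(2n'+1)m(K_Y+B_Y))\leq A$, so the pairs $(Y,B_Y)$ are log birationally bounded (Theorem \ref{log-bound}), whence DCC of volumes (Theorem \ref{dcc}) gives $\mathrm{vol}(Y,K_Y+B_Y)\geq\delta_n$ and hence a uniform $m$. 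None of this inductive structure appears in your proposal, and it cannot be replaced by the citation you invoke.

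Your middle step is also misdirected, though in a less fatal way: bounding the \emph{torsion order} of $M_Y$ by a monodromy/Minkowski argument is both unproven in your sketch (as you acknowledge) and unnecessary. The paper gets $M_Y\sim_{\mathbb{Q}}0$ directly from $\mathrm{Var}(f)=0$ plus the good minimal model hypothesis via Ambro, Kawamata and Fujino (Theorem \ref{tfae}); the Betti bound $k$ enters only through Fujino--Mori's integer $N$ making $bNM_Y$ an integral nef Cartier divisor, not through any bound on its order in $\mathrm{Pic}(Y)\otimes\mathbb{Q}$. The possible large torsion is then sidestepped: $\mathbb{Q}$-linear (hence numerical) triviality gives $\mathrm{vol}(Y,K_Y+M_Y+B_Y)=\mathrm{vol}(Y,K_Y+B_Y)$ for the DCC argument, the Nadel-vanishing step only needs $M_Y$ nef, and Lemma \ref{d-bir} absorbs the numerically trivial Cartier divisor $mM_Y$ into a birational pluri-log-canonical map without ever knowing that $mM_Y\sim 0$. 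So the proposal invests its effort in a step the paper shows to be avoidable, while black-boxing the step --- effective birationality of $K_Y+M_Y+B_Y$ with only DCC control on $B_Y$ --- that constitutes the actual proof.
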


\begin{theorem}\label{n-1 thm} Let $X$ be an $n$-dimensional smooth projective
variety of Kodaira dimension $n-1$ with Iitaka fibration $f:X\to Y$.
Then there exists a positive integer $m_n$ depending only on $n$
such that $\phi_{mK_X}$ is birationally equivalent to $f$ for all
positive integers $m$ divisible by $m_n$.
\end{theorem}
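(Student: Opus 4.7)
First I reduce the problem to the data appearing in Theorem \ref{main thm}. Since $\kappa(X)=n-1=\mbox{dim}X-1$, the generic fibre $F$ of $f:X\to Y$ has dimension one and Kodaira dimension zero, so $F$ is a smooth elliptic curve. Elliptic curves are their own good minimal models; $h^{0}(F,K_F)=1$ forces $b=1$; and the associated $b$-th root cover is trivial, so $E'=F$ is a curve with $\mbox{Betti}_1(E')=2$. The invariants $(n,b,k)=(n,1,2)$ are therefore pinned by $n$ alone, and whenever the variation of $f$ is zero Theorem \ref{main thm} immediately supplies the bound $m(n,1,2)$. The whole content of Theorem \ref{n-1 thm} lies in the non-isotrivial case.

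For the non-isotrivial case I would apply the canonical bundle formula for elliptic fibrations of Kodaira, extended to higher-dimensional bases by Fujino--Mori. After replacing $f$ by a suitable birational model, it gives
$$K_X \sim_{\mathbb{Q}} f^{*}(K_Y + B_Y + M_Y),$$
where $B_Y$ is the discriminant divisor whose coefficients lie in the finite set $\{1/6, 1/4, 1/3, 1/2, 2/3, 3/4, 5/6, 1\}$ of Kodaira type, and $M_Y=\tfrac{1}{12}\,j^{*}\mathcal{O}_{\mathbb{P}^{1}}(1)$ is the semiample moduli $\mathbb{Q}$-divisor associated to the $j$-map $j:Y\dashrightarrow\mathbb{P}^{1}$; in particular both $12B_Y$ and $12M_Y$ are Cartier and, since $f$ is the Iitaka fibration, $K_Y+B_Y+M_Y$ is big on $Y$.

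The key step is then an effective birationality statement on the $(n-1)$-dimensional base for log pairs of log general type whose boundary has coefficients in a fixed finite set and whose moduli part has uniformly bounded Cartier index. Such a statement (of Hacon--M$^{\rm{c}}$Kernan type applied to the klt pair $(Y,B_Y)$ twisted by the nef $\mathbb{Q}$-divisor $M_Y$) produces an integer $m'_n$ depending only on $n$ such that $|\lfloor m'_n(K_Y+B_Y+M_Y)\rfloor|$ defines a birational map on $Y$. The canonical bundle formula then yields an inclusion
$$H^{0}(Y,\lfloor m(K_Y+B_Y+M_Y)\rfloor)\hookrightarrow H^{0}(X,mK_X)$$
whenever $m$ is divisible by $12\,m'_n$; since $h^{0}(F,mK_F)=1$ for every $m\ge 1$, the map $\phi_{mK_X}$ is birationally equivalent to $f$. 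Setting $m_n:=12\,m'_n$ and combining with the isotrivial case completes the argument. The principal obstacle is this effective birationality input on $Y$: uniformity in $n$ hinges on the finiteness of the Kodaira coefficient set and the boundedness of the Cartier index of $M_Y$, features that are special to the elliptic fibration situation.
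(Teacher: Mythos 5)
Your reduction is fine as far as it goes: $F$ is an elliptic curve, so $b=1$, $E'=F$, and the Betti number is bounded; and the identification $M_Y\sim_{\mathbb{Q}}\frac{1}{12}J^*\mathcal{O}_{\mathbb{P}^1}(1)$ via the $j$-map is exactly the special feature the paper also exploits (via \cite[7.16]{ps07}). But there is a genuine gap, and it sits precisely where you write ``such a statement \dots produces an integer $m'_n$ depending only on $n$.'' The effective birationality for the pair $(Y,B_Y+M_Y)$ with a bound depending only on $n$ is not an off-the-shelf theorem of Hacon--M$^{\rm{c}}$Kernan type: their result is for $K$ of a variety of general type, not for log pairs with boundary and a nef moduli part, and no citable statement of the kind you invoke existed. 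That statement is in fact the whole content of the theorem, and the paper has to prove it: by Remark \ref{no1} the construction of isolated non-klt centers (Proposition \ref{isolated}) goes through without the isotriviality hypothesis, but that construction is itself an induction on dimension which crucially uses that the non-klt centers one cuts down are again bases of Iitaka fibrations (Lemma \ref{iitaka}, Lemma \ref{restriction}, Subadjunction, M$^{\rm{c}}$Kernan's covering families); it yields a birationality bound of the shape $\alpha\,\mathrm{vol}(Y,K_Y+M_Y+B_Y)^{-1/n'}+\beta$, and one must then separately bound $\mathrm{vol}(Y,K_Y+M_Y+B_Y)$ from below. The paper does the latter by replacing $M_Y$ with $\frac{1}{12}D_Y$ for a general member $D_Y$ of the free system $|J^*\mathcal{O}_{\mathbb{P}^1}(1)|$ and feeding the resulting pairs into the log birational boundedness and DCC-of-volumes theorems (Theorem \ref{log-bound}, Theorem \ref{dcc}), together with Lemmas \ref{global sections} and \ref{birational map}. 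None of this is supplied or replaced by your outline.

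A secondary error: over a base of dimension $\geq 1$ the coefficients of $B_Y$ do not lie in the finite Kodaira list, and $12B_Y$ need not be Cartier. Already multiple fibers contribute coefficients $1-1/u$, and in general the coefficients lie only in the infinite DCC set $A(1,12)=\{(12u-v)/(12u)\}$ of Lemma \ref{vz}, with unbounded denominators. So ``uniformity hinges on the finiteness of the Kodaira coefficient set'' is not a valid basis for the key step; what one actually has is a DCC coefficient set plus the bounded index $12$ of $M_Y$, and extracting uniformity from that is exactly the HMX-based volume bound plus the inductive non-klt center machinery described above. (Also, the split into isotrivial versus non-isotrivial cases is unnecessary once one notes, as in Remark \ref{no1}, that hypothesis (1) of Theorem \ref{main thm} is not used in Proposition \ref{isolated}.)
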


Conjecture \ref{conjecture} has been extensively studied. In
\cite{fm00}, Fujino and Mori prove that if $\kappa(X)=1$, then
(\ref{conjecture}) holds under the hypothesis (3) of Theorem
\ref{main thm}. Viehweg and Zhang \cite{vz07} also obtain this
uniformity result for $\kappa(X)=2$ under the same hypothesis. A
related result of \cite{vz07} for 3-folds has been obtained
independently by Ringler \cite{rin07}. For arbitrary Kodaira
dimension, Pacienza \cite{pac07b} recently has given an affirmative
answer to (\ref{conjecture}) assuming that $Y$ is not uniruled, the
Iitaka fibration $f$ has maximal variation and the hypotheses (2)
and (3) of Theorem
\ref{main thm}.\\

 We now
sketch the proof of Theorem 1.2. The main idea is to follow the
approach of \cite{hm06}, \cite{tak06} and \cite{tsu99}. By the
Canonical Bundle Formula (cf. Section 3), there are two
$\mathbb{Q}$-divisors $M_Y$ (the moduli part) and $B_Y$ (the
boundary part) on $Y$, such that for all $i>0$,
$H^0(X,\mathcal{O}_X(ibNK_X))\cong H^0(Y,\mathcal{O}_Y(\lfloor
ibN(K_Y+M_Y+B_Y)\rfloor))$, where $N$ is a positive integer
depending on the hypothesis (3) of Theorem 1.2 and $M_Y$ is
$\mathbb{Q}$-linearly trivial by the hypotheses (1) and (2) (Theorem
\ref{tfae}). In order to prove Theorem 1.2, it remains to bound a
multiple $m$ of $bN$ for which $\phi_{m(K_Y+M_Y+B_Y)}$ is
birational. We first show that there exists such $m$ of the form
$\alpha(\mbox{vol}(Y,K_Y+M_Y+B_Y))^{-1/n'}+\beta$ that
$\phi_{m(K_Y+M_Y+B_Y)}$ is birational, where $n'=\mbox{dim}Y$ and
$\alpha$, $\beta$ are constants depending only on $n$, $b$ and $k$.
Then using techniques developed in \cite{hmx10}, we show that if
$M_Y$ is $\mathbb{Q}$-linearly trivial, vol$(Y,K_Y+M_Y+B_Y)$ can be
bounded from below. Hence $m$ admits a uniform bound.

The main difficulty is that for a very general point $y\in Y$, we
need to construct an effective $\mathbb{Q}$-divisor $D_y$ which is
$\mathbb{Q}$-linearly equivalent to $\lambda(K_Y+M_Y+B_Y)$, where
$\lambda$ depends on vol$(Y,K_Y+M_Y+B_Y)$, such that $y$ is an
isolated non-klt center of $(Y,D_y)$. There is a well established
way for producing divisors with non-klt centers at $y$. The problem
is that the smallest non-klt center $V$ containing $y$ may be of
positive dimension. In order to produce an isolated non-klt center,
we have to cut down the dimension of the non-klt centers. By
\cite{bchm06}, we can assume $Y$ is the log canonical model, so
$K_{Y}+M_{Y}+B_{Y}$ is ample. Then by Subadjunction (see Section 5)
we prove that vol$(V,(K_Y+M_Y+B_Y)|_V)$ is bounded by a number
related to vol$(Y,K_Y+M_Y+B_Y)$. Using techniques developed in
\cite{mck02} (see Section 4), we can produce a new divisor with a
smaller dimensional non-klt center at $y$. Repeating this procedure
at most $n'-1$ times, we get the desired divisor $D_y$, see Section
6.

In Theorem \ref{n-1 thm}, the generic fiber of the Iitaka fibration
$f$ is an elliptic curve, so the hypotheses (2) and (3) of Theorem
\ref{main thm} automatically hold and $12M_Y$ is linearly equivalent
to a base point free divisor on $Y$. Then using techniques in
\cite{hmx10} and the above argument, we prove that \ref{conjecture}
holds true for any $n$-dimensional variety $X$ of Kodaira dimension
$n-1$.

\subsection*{Acknowledgements} The author would like to thank
his advisor Professor Christopher Hacon for suggesting this problem
and many useful discussions. He would also like to thank Professor
Chenyang Xu for useful suggestions.

\section{Preliminaries}
\subsection{Notation and conventions} We work over the complex
number field $\mathbb{C}$. Let $X$ be a normal variety. We say that
two $\mathbb{Q}$-divisor $D_1$, $D_2$ on $X$ are
$\mathbb{Q}$-linearly equivalent ($D_1\sim_{\mathbb{Q}}D_2$) if
there exists an integer $m>0$ such that $mD_i$ are linearly
equivalent. If $D=\sum d_iD_i$ is a $\mathbb{Q}$-divisor, then the
round down of $D$ is $\lfloor D \rfloor =\sum \lfloor d_i \rfloor
D_i$, where $\lfloor d \rfloor $ denotes the largest integer which
is at most $d$, and the round up of $D$ is $\lceil D \rceil=-\lfloor
-D \rfloor$.

A $\mathbf{log}$ $\mathbf{pair}$ $(X,\Delta)$ is a normal variety
$X$ and an effective $\mathbb{Q}$-Weil divisor $\Delta$ on $X$ such
that $K_X+\Delta$ is $\mathbb{Q}$-Cartier. We say that $(X,\Delta)$
is $\mathbf{log}$ $\mathbf{smooth}$ if $X$ is smooth and $\Delta$ is
a $\mathbb{Q}$-divisor with simple normal crossings support. A
projective morphism $\mu:Y\longrightarrow X$ is a $\mathbf{log}$
$\mathbf{resolution}$ of the pair $(X,\Delta)$ if $Y$ is smooth and
$\mu^{-1}(\Delta)\ \cup$ $\{$exceptional set of $\mu\}$ is a divisor
with simple normal crossings support. We write
$K_Y=\mu^*(K_X+\Delta)+\Gamma$ and $\Gamma = \sum a_i \Gamma_i$
where $\Gamma_i$ are distinct reduced irreducible divisors. We call
$a_i$ the $\mathbf{discrepancy}$ of the pair $(X,\Delta)$ at
$\Gamma_i$. The pair $(X,\Delta)$ is $\mathbf{kawamata}$
$\mathbf{log}$ $ \mathbf{terminal}$, klt for short (resp.
$\mathbf{log}$ $\mathbf{canonical}$, lc for short), if there is a
log resolution $\mu : Y\longrightarrow X$ as above such that the
discrepancies of $\Gamma$ are strictly greater than $-1$, i.e.
$a_i>-1$ for all $i$ (resp. $a_i\geq -1$). A subvariety $V$ of $X$
is called a $\mathbf{non}$-$\mathbf{klt}$ $\mathbf{center}$ of
$(X,\Delta)$ if it is the image of a divisor of discrepancy at most
$-1$. The $\mathbf{non}$-$\mathbf{klt}$ $\mathbf{locus}$
Non-klt$(X,\Delta)$ of the pair $(X,\Delta)$ is the union of the
non-klt centers. A non-klt center $V$ is called a $\mathbf{pure}$
$\mathbf{log}$ $\mathbf{canonical}$ $\mathbf{center}$ if
$(X,\Delta)$ is log canonical at the generic point of $V$.

If $D$ is a Weil divisor on a normal projective variety $X$, then
$\phi_D$ denotes the rational map $X\dashrightarrow
\mathbb{P}H^0(X,\mathcal{O}_X(D))$ induced by global sections of
$\mathcal{O}_X(D)$.

\subsection{Volumes and bounded pairs}
\begin{definition}
Lex $X$ be an irreducible projective variety of dimension $n$ and
$D$ be a $\mathbb{Q}$-divisor. The $\mathbf{volume}$ of $D$ is
$$
\mbox{vol}(X,D)=\limsup_{m\rightarrow \infty}
\frac{n!h^0(X,\mathcal{O}_X(mD))}{m^n}.$$ We say that $D$ is
$\mathbf{big}$ if $\mbox{\rm{vol}}(X,D)>0$.
\end{definition}

We refer the reader to \cite{laz1} for further details.
\begin{lemma}[\mbox{\cite[Lemma 2.2]{hm06}}]\label{vol}
Let $X$ be a projective variety, $D$ a divisor such that $\phi_{D}$
is birational with image $Z$. Then the volume of $D$ is at least the
degree of $Z$ and hence at least $1$.
\end{lemma}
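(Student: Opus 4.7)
The plan is to bound $\mbox{vol}(X,D)$ from below by $\mbox{vol}(Z,H)$, where $H$ is the hyperplane class on $Z \subset \mathbb{P}H^0(X,\mathcal{O}_X(D))$, and then identify $\mbox{vol}(Z,H)$ with $\deg Z$.

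First I would pass to a birational model on which $\phi_D$ becomes a morphism. Choose a proper birational $\mu : X' \to X$ with $X'$ smooth resolving the indeterminacies of $\phi_D$ (equivalently, the base locus of $|D|$), so that $\phi_D \circ \mu$ extends to a genuine morphism $\phi : X' \to Z$. Write $\mu^* D = M + F$, where $M = \phi^* H$ is base-point free and $F \geq 0$ is the fixed part. Birational invariance of the volume, together with the inclusion $M \leq \mu^* D$, gives
\[
\mbox{vol}(X,D) \;=\; \mbox{vol}(X',\mu^* D) \;\geq\; \mbox{vol}(X',M).
\]

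Second, since $\phi_D$ is birational onto $Z$, so is $\phi$; in particular $\dim Z = \dim X$ and $\phi$ has degree one onto its image. Hence
\[
\mbox{vol}(X',M) \;=\; \mbox{vol}(X',\phi^* H) \;=\; \mbox{vol}(Z,H) \;=\; H^{\dim Z} \;=\; \deg Z,
\]
using that $H$ is the restriction of an ample line bundle to $Z$. Finally, $\deg Z \geq 1$ because $Z$ is a nonempty projective subvariety of $\mathbb{P}^N$, which yields the second assertion.

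I do not anticipate any serious obstacle here, as the argument is essentially definitional. The only mild points requiring care are that $X$ need not be smooth or normal (handled by passing to the resolution $\mu$) and that the birationality of $\phi_D$ must be used to guarantee that $\phi$ is generically one-to-one, so that pulling back sections does not lose any multiplicity in the comparison $\mbox{vol}(X',\phi^* H) = \mbox{vol}(Z,H)$.
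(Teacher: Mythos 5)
Your argument is correct, and it is essentially the standard proof of this statement; note that the paper itself gives no proof here but simply cites \cite[Lemma 2.2]{hm06}, whose proof runs along the same lines (pass to a resolution, drop to the mobile part, and compute the volume of the pullback of $\mathcal{O}_Z(1)$ as $\deg Z$ via the projection formula). Two small points of hygiene: the pullback $\mu^*D$ only makes literal sense when $D$ is ($\mathbb{Q}$-)Cartier, so for a general Weil divisor one should phrase the decomposition in terms of the pulled-back linear series $\mu^{-1}_*|D|$ and its fixed part rather than of $\mu^*D$ itself (in the applications in this paper $D$ lives on a smooth variety, so this is harmless); and in the chain $\mbox{vol}(X',\phi^*H)=\mbox{vol}(Z,H)$ you only need the inequality $\geq$, which follows at once from injectivity of $\phi^*$ on sections --- this sidesteps any worry about $Z$ being non-normal, while the full equality would otherwise require the generically finite pullback formula for volumes or the computation $(\phi^*H)^{\dim X}=\deg(\phi)\,H^{\dim Z}$.
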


\begin{lemma}[\mbox{\cite[Lemma 2.3.4]{hmx10}}]\label{d-bir}
Let $X$ be a normal projective variety of dimension $n$ and let $D$
be a big $\mathbb{Q}$-Cartier divisor on $X$. If $\phi_{D}$ is
birational, then $\phi_{K_X+(2n+1)(D+M)}$ is birational for any
numerically trivial Cartier divisor $M$.
\end{lemma}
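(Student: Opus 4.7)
The plan is to reduce birationality of $\phi_{K_X+(2n+1)(D+M)}$ to separating two very general points $x_1,x_2\in X$, and then to produce an effective $\mathbb{Q}$-divisor $\Delta\sim_{\mathbb{Q}}\lambda D$ with $\lambda<2n+1$ such that $(X,\Delta)$ has $x_1$ as an isolated non-klt center and is non-klt at $x_2$. Granted such a $\Delta$, Nadel vanishing (applied on a log resolution to circumvent possible non-nefness of $D$) will yield
\[
H^1\!\bigl(X,\mathcal{J}(X,\Delta)\otimes\mathcal{O}_X(K_X+(2n+1)(D+M))\bigr)=0,
\]
because $(2n+1)(D+M)-\Delta$ is numerically equivalent to the big class $(2n+1-\lambda)D+(2n+1)M$; the resulting surjection onto global sections on the non-klt subscheme separates $x_1$ from $x_2$. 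The numerically trivial divisor $M$ contributes only through the numerical positivity needed by Kawamata--Viehweg, so it costs nothing.

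To begin, I would construct a divisor non-klt at both points. Since $\phi_D$ is birational, Lemma \ref{vol} gives $\mathrm{vol}(X,D)\geq 1$, so $\mathrm{vol}(X,\lambda D)=\lambda^n\,\mathrm{vol}(X,D)>2n^n$ for any $\lambda>n\cdot 2^{1/n}$, an inequality satisfied by some $\lambda\leq 2n$. Comparing the asymptotic $h^0(X,m\lambda D)\sim\lambda^n m^n\mathrm{vol}(X,D)/n!$ with the colength $\binom{mn+n}{n}\sim(mn)^n/n!$ of $\mathfrak{m}_{x_i}^{mn+1}$ produces, for $m\gg 0$ and very general $x_1,x_2$ in the smooth locus of $X$, an effective $\Delta_0\sim_{\mathbb{Q}}\lambda D$ with $\mathrm{mult}_{x_i}\Delta_0>n$ for $i=1,2$; hence $(X,\Delta_0)$ is non-klt at both $x_i$.

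The hard part will be the tie-breaking: isolating $x_1$ as a minimal non-klt center without destroying non-kltness at $x_2$ and without pushing the coefficient of $D$ past $2n+1$. Let $V\ni x_1$ be a minimal non-klt center of $(X,\Delta_0)$. If $\dim V>0$, genericity of $x_1$ and birationality of $\phi_D$ force $\phi_D|_V$ to be generically finite onto its image, so $D|_V$ is big. Repeating the multiplicity construction on $V$ produces an effective $\mathbb{Q}$-divisor on $V$ of high multiplicity at $x_1$, which, via Kawamata subadjunction and a vanishing-based lifting of sections from $V$ to $X$, extends to an effective perturbation of class $\epsilon'D$ on $X$. Adding it to a tie-broken rescaling $c\Delta_0$ (with $c\leq 1$ chosen so that the new pair is just log canonical at $x_1$) strictly drops $\dim V$. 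Iterating at most $n-1$ times, and using that we started with $\lambda\leq 2n$ so there is room of at least $1$ for the accumulated perturbations, produces the desired $\Delta$ with $\lambda<2n+1$ and completes the proof.
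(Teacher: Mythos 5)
The paper itself gives no proof of this lemma---it is quoted verbatim from \cite[Lemma 2.3.4]{hmx10}---so your proposal has to stand on its own, and as written it does not establish the constant $2n+1$. The weak point is your final paragraph. After the multiplicity construction you have already spent $\lambda\le 2n$ (you only use $\mathrm{vol}(X,D)\ge 1$), and the minimal non-klt center $V\ni x_1$ may be positive dimensional. Cutting down its dimension is not free: in the Hacon--M$^{\rm{c}}$Kernan/Takayama scheme you invoke, one step costs roughly $\dim V\cdot \mathrm{vol}(V,D|_V)^{-1/\dim V}+\varepsilon$ worth of $D$, and the only available lower bound here is $\mathrm{vol}(V,D|_V)\ge 1$, so a single step may cost up to $n-1$ and the iteration up to order $n^2$. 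Your assertion that the high-multiplicity divisor on $V$ ``extends to an effective perturbation of class $\epsilon' D$ on $X$'' misstates this cost (the extension is precisely the hard Takayama-type step, and the class needed is comparable to $\dim V\cdot \mathrm{vol}(V,D|_V)^{-1/\dim V}\cdot D$, not $\epsilon'D$), so the claim that the accumulated perturbations fit into the remaining room of $1$ is unjustified; your route only yields the statement with a constant growing like $n^2$. The sharp $2n+1$ is obtained by using birationality of $\phi_D$ far more directly: near a very general point $x$ the map given by $|\lfloor D\rfloor|$ is an isomorphism onto its image, so $n$ general members of $|\lfloor D\rfloor|$ through $x$ are simple normal crossing at $x$ and already cut out $\{x\}$ as a zero-dimensional pure log canonical center, with no inductive cutting at all; adding $n$ members through $y$ (possibly after switching the roles of $x$ and $y$) gives $\Delta\sim 2n\lfloor D\rfloor$, i.e.\ $2n(D+M)$ is ``potentially birational'', and the standard lemma behind this notion (\cite{hm06}, \cite{tak06}, \cite[Lemma 2.3.2]{hmx10}) then gives birationality of $\phi_{K_X+\lceil 2n(D+M)\rceil}$, the extra $+1$ supplying the positivity needed for the vanishing.

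Two further points would need repair even for a weaker constant. Nadel vanishing requires the relevant difference to be nef and big; $(2n+1)(D+M)-\Delta$ being merely big is not enough, and passing to a log resolution does not cure this, since the pullback of a big divisor is still only big. The standard fix is to write $D\sim_{\mathbb{Q}}A+E$ with $A$ ample and $E\ge 0$, replace $\Delta$ by $\Delta+(2n+1-\lambda)E$, and take $x_1,x_2$ off $\mathrm{Supp}\,E$, so the difference becomes ample plus the numerically trivial $(2n+1)M$ (hence nef and big) while the multiplier ideal is unchanged near $x_1,x_2$. Finally, $X$ is only normal and $K_X$ is not assumed $\mathbb{Q}$-Cartier, so the pairs $(X,\Delta)$ and multiplier ideals you use are not defined on $X$ itself; the argument should be run on a smooth birational model and the sections pushed down to $X$.
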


\begin{definition}[\mbox{\cite[Definiton 2.4.2]{hmx10}}]
A set $\mathfrak{D}$ of log pairs is $\mathbf{log}$
$\mathbf{birationally}$ $\mathbf{bounded}$ if there is a log pair
$(Z,B)$ and a projective morphism $Z\longrightarrow T$, where $T$ is
of finite type, such that for every $(X,\Delta)\in \mathfrak{D}$,
there is a closed point $t\in T$ and a birational map
$f:Z_t\dashrightarrow X$ such that the support of $B_t$ contains the
support of the strict transform of $\Delta$ and any $f$-exceptional
divisor.
\end{definition}

\begin{theorem}[\mbox{\cite[Theorem 3.1]{hmx10}}]\label{log-bound}
Fix a positive integer $n$ and two constants $A$ and $\delta >0$.
Then the set of log pairs $(X,\Delta)$ satisfying
\begin{enumerate}
\item $X$ is projective of dimension $n$,
\item $(X,\Delta)$ is log canonical,
\item the coefficients of $\Delta$ are at least $\delta$,
\item there is a positive integer $m$ such that $\mbox{\rm{vol}}(X,m(K_X+\Delta))\leq
A$,
\item $\phi_{K_X+m(K_X+\Delta)}$ is birational,

\end{enumerate}
is log birationally bounded.
\end{theorem}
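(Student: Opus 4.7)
My plan is to realize all the pairs $(X,\Delta)$ as birational models of fibres in a single finite-type family, using the map $\phi = \phi_{K_X + m(K_X+\Delta)}$ to provide an embedding into a bounded projective space and then constructing a common boundary $B$ on the total space.

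I would begin with a volume bound. From
\[
K_X + m(K_X+\Delta) \sim_{\mathbb{Q}} (m+1)(K_X+\Delta) - \Delta
\]
and $\Delta \geq 0$, one gets $K_X + m(K_X+\Delta) \leq (m+1)(K_X+\Delta)$ as divisors, hence
\[
\mathrm{vol}(X, K_X + m(K_X+\Delta)) \leq (m+1)^n\, \mathrm{vol}(X, K_X + \Delta) \leq \Bigl(\tfrac{m+1}{m}\Bigr)^{\!n} A \leq 2^n A,
\]
where the second inequality uses (4) in the form $\mathrm{vol}(X,K_X+\Delta) = m^{-n}\mathrm{vol}(X, m(K_X+\Delta)) \leq A/m^n$. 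Since $\phi_{K_X+m(K_X+\Delta)}$ is birational by (5), a generic $(2n+2)$-dimensional subsystem of $|K_X + m(K_X+\Delta)|$ still defines a birational map $\psi \colon X \dashrightarrow Z \subset \mathbb{P}^{2n+1}$, and by Lemma \ref{vol}, $\deg Z \leq 2^n A$. The $n$-dimensional subvarieties of $\mathbb{P}^{2n+1}$ of degree at most $2^n A$ form a finite-type Chow variety $T$, so we obtain a family $\mathcal{Z} \to T$ containing a birational model of every such $X$.

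It remains to construct the boundary $B$ on $\mathcal{Z}$. After stratifying $T$ and resolving the universal family, the aim is a divisor $B$ on $\mathcal{Z}$ whose restriction to each fibre dominates both the strict transform of $\Delta$ and the $\psi^{-1}$-exceptional divisors. Hypothesis (3) enters decisively here: writing $\delta \sum D_i \leq \Delta$ for the reduced components $D_i$ of $\Delta$, the bound $\mathrm{vol}(X,K_X+\Delta) \leq A$ translates, via intersection with the polarization induced by $\phi$, into uniform bounds on both the number of components of $\Delta$ and their degrees in the embedding.

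The main obstacle is this last step, namely the uniform choice of $B$ across the entire family. This requires flattening of the universal family, noetherian induction on $T$, and resolution of singularities, and it is where the lower bound $\delta > 0$ plays an indispensable role. Without it, the boundary could have arbitrarily many small-coefficient components, no uniform intersection-theoretic bound would hold, and the pairs $(X,\Delta)$ would fail to be log birationally bounded in any reasonable sense.
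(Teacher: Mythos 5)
First, a remark on the comparison itself: the paper does not prove this statement at all --- it is quoted verbatim from \cite[Theorem 3.1]{hmx10} --- so the relevant benchmark is the proof given there. Your opening steps are correct and coincide with the standard opening of that argument: the estimate $\mathrm{vol}(X,K_X+m(K_X+\Delta))\le\bigl(\tfrac{m+1}{m}\bigr)^n\mathrm{vol}(X,m(K_X+\Delta))\le 2^nA$ is fine, and composing $\phi_{K_X+m(K_X+\Delta)}$ with a general linear projection (equivalently, a general $(2n+2)$-dimensional subsystem --- this does need the secant-variety/general-projection justification, which you omit) gives a birational map onto an $n$-fold $Z\subset\mathbb{P}^{2n+1}$ whose degree is at most $2^nA$ by Lemma \ref{vol}, and such $Z$ indeed move in a finite-type Chow family.

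The genuine gap is precisely the step you yourself label ``the main obstacle'': producing the boundary $B$, i.e.\ a bound, depending only on $n,A,\delta$, for the degree with respect to the hyperplane class $H$ of (i) the strict transform of $\mathrm{Supp}\,\Delta$ and (ii) the divisors on $Z$ exceptional over $X$. You assert that hypothesis (3) together with $\mathrm{vol}(X,K_X+\Delta)\le A/m^n$ ``translates, via intersection with the polarization, into uniform bounds'', but no such translation is automatic: volumes do not control intersection numbers of non-nef divisors, and $K_X$ may be very negative, so $\Delta\cdot H^{n-1}$ is not bounded by $\mathrm{vol}(X,K_X+\Delta)$ by any formal manipulation. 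What is actually required (and is the heart of the proof in \cite{hmx10}) is a comparison lemma of the following type: writing $H$ for (the mobile part of) $K_X+m(K_X+\Delta)$ on a log resolution, one has $K_X+\Delta+kH\le(k+1+km)(K_X+\Delta)$, hence $\mathrm{vol}(X,K_X+\Delta+kH)\le(2k+1)^nA$ for each fixed $k$; and, conversely, if a prime divisor $S\le\delta^{-1}\Delta$ had $S\cdot H^{n-1}$ large, then such a volume would have to be large --- an implication proved by producing sections (a non-klt-centre/cutting argument), not by intersection theory alone. Only through such a statement does $\delta$ bound the number and degrees of the components of $\Delta$; and the divisors exceptional for $Z\dashrightarrow X$, which the definition of log birational boundedness also requires to lie in $\mathrm{Supp}\,B_t$, are not addressed in your sketch at all. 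The flattening, noetherian induction and resolution you invoke at the end are routine once these degree bounds are available; without them the proposal establishes only boundedness of the underlying varieties $X$ up to birational equivalence, not of the pairs $(X,\Delta)$.
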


\begin{theorem}[\mbox{\cite[Theorem 1.7]{hmx10}}]\label{dcc}
Fix a set $I \subset [0,1]$ which satisfies the DCC. Let
$\mathfrak{D}$ be a set of log smooth pairs $(X,\Delta)$, which is
log birationally bounded, such that if $(X,\Delta)\in \mathfrak{D}$,
then the coefficients of $\Delta$ belong to $I$. Then the set
$$\{\mbox{\rm{vol}} (X,K_X+\Delta)|(X,\Delta)\in \mathfrak{D}\},$$
satisfies the DCC.
\end{theorem}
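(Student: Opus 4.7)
The plan is the standard three-step approach to DCC-of-volume statements: reduce to a single bounded family by Noetherian induction on the parameter space $T$, transfer each pair's volume onto a common model, and then exploit DCC of coefficients together with monotonicity of the volume function to conclude.

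First, log birational boundedness supplies a family $(Z,B)\to T$ as in Definition 2.7. By Noetherian induction on $T$ and stratification, I may assume $T$ is smooth and irreducible, the irreducible components $B_1,\ldots,B_k$ of $B$ are flat over $T$, and the fiber $(Z_t,B_t=\sum_i B_{i,t})$ is log smooth with a constant number $k$ of components for every $t\in T$.

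Second, given $(X,\Delta)\in\mathfrak{D}$ with associated $t\in T$ and birational map $f_t:Z_t\dashrightarrow X$, pick a common log resolution $W$ with morphisms $p:W\to X$ and $q:W\to Z_t$. Writing $K_W+\Delta_W=p^{*}(K_X+\Delta)+E_W$, where $\Delta_W$ is the strict transform of $\Delta$ together with the reduced $p$-exceptional divisors and $E_W$ is effective and $p$-exceptional, the projection formula gives $\mathrm{vol}(X,K_X+\Delta)=\mathrm{vol}(W,K_W+\Delta_W-E_W)$. The birational boundedness hypothesis --- $\mathrm{supp}(B_t)$ contains the strict transform of $\Delta$ and every $f_t$-exceptional divisor --- forces $\mathrm{supp}(q_{*}\Delta_W)\subseteq\mathrm{supp}(B_t)$ with coefficients in the DCC set $I\cup\{1\}$; the negativity lemma applied to $q$ then identifies the volume with $\Phi(c,t):=\mathrm{vol}(Z_t,K_{Z_t}+\sum_i c_i B_{i,t})$ for some $c=(c_1,\ldots,c_k)\in(I\cup\{1\})^{k}$. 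Invariance of plurigenera for log smooth families (after shrinking $T$ if necessary) moreover shows that $\Phi$ does not depend on $t$.

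Finally, to deduce DCC, suppose a strictly decreasing sequence $\Phi(c^{(j)})$ existed. Since $I\cup\{1\}$ satisfies DCC, any sequence in $I\cup\{1\}$ admits a nondecreasing subsequence; extracting such subsequences successively in each of the $k$ coordinates produces a subsequence along which $c^{(j)}$ is componentwise nondecreasing. The volume is monotone in the boundary --- if $\Delta_1\leq\Delta_2$ then multiplication by a section of $m(\Delta_2-\Delta_1)$ gives an injection $H^{0}(m(K_X+\Delta_1))\hookrightarrow H^{0}(m(K_X+\Delta_2))$, so $\mathrm{vol}(K_X+\Delta_1)\leq\mathrm{vol}(K_X+\Delta_2)$ --- whence $\Phi(c^{(j)})$ is nondecreasing along the subsequence, contradicting strict decrease. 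Therefore $\{\Phi(c,t)\}$ satisfies DCC, which is the desired conclusion. The main obstacle is the transfer step: one must confirm that pushing the pair down to $(Z_t,B_t)$ introduces no coefficients outside $I\cup\{1\}$ and that the equality (not merely an inequality) of volumes is preserved. This rests crucially on the fact that every $f_t$-exceptional divisor is contained in $\mathrm{supp}(B_t)$, combined with log smoothness of $(X,\Delta)$ to keep all coefficients inside $[0,1]$; the invariance-of-plurigenera input for log smooth families is also nontrivial and concentrates much of the technical weight of the original HMX argument.
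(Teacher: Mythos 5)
There is a genuine gap, and it sits exactly at the step you flag as ``the main obstacle.'' First, note that the paper does not prove this statement at all: it is quoted verbatim from \cite[Theorem 1.7]{hmx10}, so the only question is whether your sketch would actually prove it. Your transfer step claims that the negativity lemma identifies $\mbox{\rm{vol}}(X,K_X+\Delta)$ with $\mbox{\rm{vol}}(Z_t,K_{Z_t}+\sum_i c_iB_{i,t})$ for coefficients $c_i\in I\cup\{1\}$. What is true is only an inequality. Writing $K_W+\Delta_W=p^*(K_X+\Delta)+E_W$ with $E_W\geq 0$ and pushing sections forward by $q$ gives
$$\mbox{\rm{vol}}(X,K_X+\Delta)=\mbox{\rm{vol}}(W,K_W+\Delta_W)\leq \mbox{\rm{vol}}(Z_t,K_{Z_t}+q_*\Delta_W),$$
but the difference $K_W+\Delta_W-q^*(K_{Z_t}+q_*\Delta_W)$ is merely $q$-exceptional, not effective: a component of $\Delta$ with coefficient in $I$ (possibly small) that is contracted by $X\dashrightarrow Z_t$ appears on $W$ with its small coefficient, while the discrepancy of $(Z_t,q_*\Delta_W)$ along it can be close to $-1$ when its center lies deep inside $\mbox{Supp}(B_t)$. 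The negativity lemma says nothing about such components, and the inequality of volumes can be strict. The definition of log birational boundedness only controls divisors on $Z_t$ that are exceptional for $f_t$; it places no constraint on divisors of $X$ contracted in the other direction, which is precisely where equality fails.

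Once you only have an upper bound by quantities of the form $\Phi(c)$, the final monotonicity argument proves nothing: a set that is bounded above by members of a DCC-parametrized family can still contain a strictly decreasing sequence. This is where the real content of the HMX theorem lives. Their argument must produce a matching comparison in the other direction, which requires running the MMP (via \cite{bchm06}) to pass to log canonical models, a limiting argument in the coefficients using the DCC hypothesis together with continuity of the volume on a fixed model, and the deformation invariance of log plurigenera for log smooth families --- the last of which you cite as a black box, but which is itself one of the main theorems of the same paper rather than an off-the-shelf input. So the overall scaffolding (Noetherian reduction, pushforward to a fixed family, monotone subsequences in a DCC product) is the right frame, but the claimed volume identity is false as stated and the argument collapses without the substitute machinery.
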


\subsection{Multiplier ideals and singularities of pairs}Let $X$ be
a smooth variety. If $D$ is an effective $\mathbb{Q}$-divisor on
$X$, then the $\mathbf{multiplier}$ $\mathbf{ideal}$
$\mathbf{sheaf}$ associated to $D$ is defined to be
$$\mathcal{J}(X,D)=\mu_*\mathcal{O}_{X'}(K_{X'/X}-\lfloor \mu^*D\rfloor)
$$
where $\mu: X'\rightarrow X$ is a log resolution of $(X,D)$. It is
known that a pair $(X,D)$ is
 klt (resp. non-klt) at a point $x$, if and only if
$$\mathcal{J}(X,D)_x=\mathcal{O}_{X,x}\ \ \ (\mbox{\rm{resp.}} \ \ \ \mathcal{J}(X,D)_x\neq \mathcal{O}_{X,x})$$
and a pair is klt if it is klt at each point $x\in X$. A pair
$(X,D)$ is lc at a point $x$, if and only if
$$\mathcal{J}(X,(1-\varepsilon)D)_x=\mathcal{O}_{X,x}$$
for all rational numbers $0<\varepsilon<1$ and a pair is lc if it is
lc at each point $x\in X$. Note that we have the following relation
for non-klt locus
$$\mbox{Non-klt}(X,D)=\mbox{Supp}(\mathcal{O}_X/\mathcal{J}(X,D))_{\mbox{red}}.$$

The following is a useful way to produce non-klt pairs.

\begin{lemma}[\mbox{\cite[Proposition 9.3.2]{laz2}}]
Assume that $X$ is smooth of dimension $n$, and let $D$ be an
effective $\mathbb{Q}$-divisor on $X$. If $\mbox{\rm{mult}}_xD\geq
n$ at some point $x\in X$, then $\mathcal{J}(X,D)$ is non-trivial at
$x$, i.e. $\mathcal{J}(X,D)\subseteq \mathfrak{m}_x$, where
$\mathfrak{m}_x$ is the maximal ideal of $x$.
\end{lemma}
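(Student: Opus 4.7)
The plan is to compute $\mathcal{J}(X,D)$ on a log resolution that factors through the blow-up of $X$ at $x$, and then read off the conclusion from the discrepancy of the exceptional divisor lying over $x$.

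First I would let $\pi : \tilde{X} \to X$ be the blow-up of $X$ at $x$, with exceptional divisor $E \cong \mathbb{P}^{n-1}$. The standard formulas give $K_{\tilde{X}/X} = (n-1)E$ and $\pi^*D = \tilde{D} + mE$, where $m = \mathrm{mult}_x D \geq n$ and $\tilde{D}$ is the strict transform of $D$. Next, I would choose any log resolution $\mu : X' \to \tilde{X}$ of the pair $(\tilde{X}, \tilde{D} + E)$ and set $\nu = \pi \circ \mu : X' \to X$, so that $\nu$ is a log resolution of $(X,D)$. Let $E'$ denote the strict transform of $E$ on $X'$. Since $E'$ is not $\mu$-exceptional, the coefficient of $E'$ in $\mu^*E$ equals $1$, so the coefficient of $E'$ in $\nu^*D$ equals $m$ while the coefficient of $E'$ in $K_{X'/X}$ equals $n-1$. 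Hence the coefficient of $E'$ in $K_{X'/X} - \lfloor \nu^*D \rfloor$ is at most $(n-1) - \lfloor m \rfloor \leq -1$.

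Finally, since $\nu(E') \subseteq \pi(E) = \{x\}$, any local section $f$ of
$\mathcal{J}(X,D) = \nu_* \mathcal{O}_{X'}(K_{X'/X} - \lfloor \nu^*D \rfloor)$
defined near $x$ must vanish along $E'$, which forces $f \in \mathfrak{m}_x$. This yields the desired containment $\mathcal{J}(X,D) \subseteq \mathfrak{m}_x$ and, in particular, shows that the multiplier ideal is non-trivial at $x$.

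The only subtlety is invoking a log resolution that is not the blow-up itself; to justify computing the multiplier ideal on $\nu$ one appeals to the standard independence-of-resolution theorem for multiplier ideals from \cite{laz2}. Beyond that, the argument is a direct discrepancy computation, and I do not foresee any serious obstacle.
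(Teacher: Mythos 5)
Your argument is correct: the blow-up at $x$ gives discrepancy $n-1$ along $E$ while $\mathrm{mult}_xD\geq n$ forces coefficient at least $n$ in $\lfloor\nu^*D\rfloor$, so the exceptional divisor appears with coefficient $\leq -1$ and sections of the multiplier ideal must vanish at $x$. The paper does not prove this lemma but simply cites \cite[Proposition 9.3.2]{laz2}, and your proof is essentially the standard argument given there, so there is nothing to add.
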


%\begin{lemma}[\mbox{\cite[Lemma 2.4]{pac07b}}]\label{pac}
%Let $X$ be a smooth projective variety and $D$ an effective
%$\mathbb{Q}$-divisor. Let $V$ be an irreducible component of
%$\mbox{\rm{Non-klt}}(X,D)$ of dimension $d$. Fix a general smooth
%point $y\in V$, and let $B$ be any effective $\mathbb{Q}$-divisor
%with \;$V\nsubseteq \mbox{\rm{Supp}}B$ such that
%$$\mbox{\rm{mult}}_yB|_V>d.$$
%Then for all rational numbers $0<\varepsilon\ll 1$, the locus
%$\mbox{\rm{Non-klt}}(X,(1-\varepsilon)D+B)$ contains $y$. If
%moreover $(X,D)$ is lc at the generic point of \ $V$ and
%$\mathcal{J}(X,D+B)=\mathcal{J}(X,D)$ away from $V$, then
%$\mbox{\rm{Non-klt}}(X,(1-\varepsilon)D+B)$ is properly contained in
%$V$ in a neighborhood of $y$.
%\end{lemma}
We now recall Nadel's vanishing theorem.
\begin{theorem}[\mbox{\cite[Theorem 9.4.8]{laz2}}]
Let $X$ be a smooth projective variety. Let $D$ be an effective
$\mathbb{Q}$-divisor on $X$, and $L$ a divisor on $X$ such that
$L-D$ is nef and big. Then, for all $i>0$, we have
$$
H^i(X,\mathcal{O}_{X}(K_X+L)\otimes \mathcal{J}(X,D))=0.$$

\end{theorem}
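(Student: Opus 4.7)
The plan is to reduce Nadel's vanishing on $X$ to Kawamata--Viehweg vanishing on a log resolution. Let $\mu\colon X'\to X$ be a log resolution of the pair $(X,D)$, so that $\mu^*D$ is a $\mathbb{Q}$-divisor with simple normal crossings support, and decompose $\mu^*D=\lfloor\mu^*D\rfloor+\{\mu^*D\}$ into its integer and fractional parts.

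The first step is to transport the cohomology from $X$ to $X'$. By the definition of the multiplier ideal recalled earlier in the paper, $\mathcal{J}(X,D)=\mu_*\mathcal{O}_{X'}(K_{X'/X}-\lfloor\mu^*D\rfloor)$. The local vanishing theorem, a consequence of relative Kawamata--Viehweg vanishing applied to the birational morphism $\mu$, gives
$$R^i\mu_*\mathcal{O}_{X'}\bigl(K_{X'/X}-\lfloor\mu^*D\rfloor\bigr)=0\qquad\text{for all }i>0.$$
Combining the projection formula with the Leray spectral sequence, and using $K_{X'}=\mu^*K_X+K_{X'/X}$, this yields a natural isomorphism
$$H^i\bigl(X,\mathcal{O}_X(K_X+L)\otimes\mathcal{J}(X,D)\bigr)\;\cong\;H^i\bigl(X',\mathcal{O}_{X'}(K_{X'}+N)\bigr),$$
where $N:=\mu^*L-\lfloor\mu^*D\rfloor$.

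The second step is to verify the hypotheses of Kawamata--Viehweg vanishing for $N$ on $X'$. A direct computation gives
$$N-\{\mu^*D\}\;=\;\mu^*L-\lfloor\mu^*D\rfloor-\{\mu^*D\}\;=\;\mu^*(L-D),$$
so $N-\{\mu^*D\}$ is the pullback of a nef and big $\mathbb{Q}$-divisor along a birational morphism, and is therefore itself nef and big. The fractional part $\{\mu^*D\}$ has simple normal crossings support by construction of $\mu$ and coefficients strictly less than $1$. Kawamata--Viehweg vanishing therefore yields $H^i(X',\mathcal{O}_{X'}(K_{X'}+N))=0$ for all $i>0$, and combining with the isomorphism above completes the proof.

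The reduction is essentially mechanical once the ingredients are in hand; the real content lies in the two invocations of Kawamata--Viehweg (absolute on $X'$, and relative to justify local vanishing), so there is no substantial obstacle beyond correctly packaging these standard inputs. The only integrality point to check is that $N$ is a genuine Weil divisor, which is immediate since $L$ is Cartier on the smooth variety $X$ and $\lfloor\mu^*D\rfloor$ is integral by construction.
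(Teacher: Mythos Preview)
Your argument is correct and is precisely the standard proof of Nadel vanishing via Kawamata--Viehweg on a log resolution, as in the cited reference \cite[Theorem~9.4.8]{laz2}. Note that the paper itself does not supply a proof of this statement: it is quoted as a known result from Lazarsfeld's book, so there is no in-paper proof to compare against beyond the citation, and your write-up matches that source.
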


\subsection{Iitaka fibration}
Here we recall some results regarding Iitaka fibrations.

Let $L$ be a line bundle on an irreducible projective variety $X$.
The semigroup $\mathbf{N}(L)$ of $L$ is
$$\mathbf{N}(L)=\{m\in \mathbb{Z}_{>0}|H^0(X,mL)\neq0\}.$$
Assuming $\mathbf{N}(L)\neq(0)$, all sufficiently large elements of
$\mathbf{N}(L)$ are multiples of a largest single natural number
$e=e(L)\geq 1$, which we call the $\mathbf{exponent}$ of $L$. If
$\kappa(X,L)=\kappa\geq 0$, then $\mbox{dim}(\phi_{mL}(X))=\kappa$
for all sufficiently large $m\in \mathbf{N}(L)$.

\begin{theorem}[\mbox{\rm{Iitaka fibrations}, see \cite[Theorem 2.1.33]{laz1}}] Let $X$ be a normal projective variety, and $L$ a line
bundle on $X$ such that $\kappa(X,L)>0$. Then for all sufficiently
large $k\in \mathbf{N}(L)$, there exists a commutative diagram of
rational maps and morphisms
$$\xymatrix{X  \ar @{-->}[d]_{\phi _k} & X_{\infty} \ar[l]_{u_{\infty}}  \ar [d]^{\phi_{\infty}}\\
Y_k & Y_{\infty} \ar @{-->}[l]^{\nu_k}  }$$ where the horizontal
maps are birational and $u_{\infty}$ is a morphism. One has
$\mbox{\rm{dim}}Y_{\infty}=\kappa(X,L)$. Moreover, if we set
$L_{\infty}=u^*_{\infty}L$, and $F$ is a very general fiber of
$\phi_{\infty}$, we have $\kappa(F, L_{\infty}|_F)=0$.
\end{theorem}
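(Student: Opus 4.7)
The plan is to construct $X_\infty$ and $Y_\infty$ as a stable birational limit of the rational maps $\phi_k : X \dashrightarrow Y_k$ for $k \in \mathbf{N}(L)$ sufficiently large. Set $\kappa = \kappa(X,L) > 0$. By the definition of Kodaira dimension and the existence of the exponent $e(L)$, one has $\dim Y_k = \kappa$ for every sufficiently large $k \in \mathbf{N}(L)$. For each such $k$, resolving the indeterminacy of $\phi_k$ yields a smooth projective $\tilde X_k$ with a birational morphism $u_k : \tilde X_k \to X$ and a surjective morphism $\tilde\phi_k : \tilde X_k \to Y_k$; after Stein factorization one may assume $\tilde\phi_k$ has connected fibers.

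The main step is to compare the data for different $k$. Given $k_1, k_2 \in \mathbf{N}(L)$ with $k_1 \mid k_2$, pick a nonzero $s \in H^0(X, (k_2 - k_1)L)$. Multiplication by $s$ embeds $H^0(X, k_1 L)$ as a linear subsystem $V_1 \subset H^0(X, k_2 L)$, and this inclusion induces a dominant rational map $\nu_{k_1, k_2} : Y_{k_2} \dashrightarrow Y_{k_1}$ with $\phi_{k_1} = \nu_{k_1, k_2} \circ \phi_{k_2}$. This map is generically finite because $\dim Y_{k_1} = \dim Y_{k_2} = \kappa$. Using connectedness of fibers after Stein factorization, together with a Noetherian-type argument on the chain of refinements (fibers of $\phi_{k_2}$ are contained in those of $\phi_{k_1}$, and this descending chain of fiber decompositions must terminate), one shows that on a cofinal sequence $k_0 \mid k_1 \mid k_2 \mid \cdots$ the maps $\nu_{k_i, k_{i+1}}$ become birational. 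Set $Y_\infty$ to be a smooth birational model of $Y_{k_0}$, and take $X_\infty$ to be a smooth projective variety birational to $X$ dominating every $\tilde X_{k_i}$; the induced morphism $\phi_\infty : X_\infty \to Y_\infty$ and birational maps $\nu_k : Y_\infty \dashrightarrow Y_k$ give the required diagram with $\dim Y_\infty = \kappa$.

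For the Kodaira dimension assertion, let $F$ be a very general fiber of $\phi_\infty$ and suppose for contradiction $\kappa(F, L_\infty|_F) \geq 1$. Then $h^0(F, m L_\infty|_F) \geq 2$ for some $m$, and by upper semicontinuity two linearly independent sections on $F$ spread to sections of $m L_\infty$ over a Zariski open neighborhood of $F$. After clearing denominators by multiplication with pullbacks of a sufficiently ample divisor on $Y_\infty$, one obtains global sections $\sigma_1, \sigma_2 \in H^0(X_\infty, Nm L_\infty)$ for some $N$ with $\sigma_1/\sigma_2$ non-constant on $F$. Thus $\phi_{Nm L_\infty}$ separates points on a general fiber of $\phi_\infty$; but by the stabilization step applied to $Nm$ in the cofinal sequence, $\phi_{Nm L_\infty}$ is birational to $\phi_\infty$ and therefore constant on its general fibers, a contradiction. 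The main obstacle I anticipate is the stabilization step in the middle paragraph: establishing rigorously that the generically finite maps $\nu_{k_1, k_2}$ become birational on a cofinal subset, which is where essentially all of the structural content of the Iitaka fibration lives.
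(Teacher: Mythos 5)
The paper does not prove this statement at all --- it is quoted from \cite[Theorem 2.1.33]{laz1} --- so your sketch has to be measured against the standard proof, whose broad outline you do follow. The first genuine problem is the stabilization step, which you yourself flag as unproven, and the argument you gesture at would not work as stated: once $k\in\mathbf{N}(L)$ is large enough that $\dim Y_k=\kappa$, all general fibers in your divisibility chain already have the same dimension $n-\kappa$, so there is no strictly descending chain to ``terminate''. What one actually uses is precisely this equidimensionality (an irreducible component of a general $\phi_{k_2}$-fiber contained in a general $\phi_{k_1}$-fiber of the same dimension must be a component of it), or, field-theoretically, that for $k_0\mid m$ all the fields $\mathbb{C}(Y_m)\subset\mathbb{C}(X)$ lie between $\mathbb{C}(Y_{k_0})$ and its relative algebraic closure in $\mathbb{C}(X)$, which is a finite extension, so they stabilize. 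Moreover, passing to Stein factorizations changes $Y_k$: the theorem asserts that $Y_\infty$ is birational to the \emph{image} $Y_k$ of $\phi_k$ for all large $k\in\mathbf{N}(L)$, so you must also show the Stein degree is eventually $1$ (i.e.\ $\phi_k$ has connected general fibers), which your construction silently assumes.

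The second, more serious gap is in the argument that $\kappa(F,L_\infty|_F)=0$. Upper semicontinuity does not let you ``spread'' two sections from the very general fiber $F$ to a neighborhood --- sections on a fiber need not extend; the correct mechanism is that if very general fibers had $\kappa\geq 1$, then, since each locus $\{y: h^0(X_y,mL_\infty|_{X_y})\geq 2\}$ is closed and the very general locus is not covered by countably many proper closed subsets, some fixed $m$ works for all $y$ in a dense open set, so $(\phi_\infty)_*\mathcal{O}_{X_\infty}(mL_\infty)$ has rank $\geq 2$ and twisting by an ample $A$ on $Y_\infty$ gives global sections. But those are sections of $mL_\infty+\phi_\infty^*A$, \emph{not} of $NmL_\infty$ as you claim, and as it stands there is no contradiction: nothing forbids $mL_\infty+\phi_\infty^*A$ from separating points of $F$. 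The missing, essential step is to convert $\phi_\infty^*A$ back into a multiple of $L_\infty$ by using that the moving part of $|kL_\infty|$ is the pullback of an ample (hence big) divisor from a model of $Y_\infty$, so that $NkL_\infty-\phi_\infty^*A$ is effective for $N\gg 0$; multiplying by a section of this difference gives sections of $(m+Nk)L_\infty$ still separating points of the general $F$, and one must then arrange $m+Nk$ to lie in the range where $\phi_{(m+Nk)L}$ is already known to contract $F$ (your appeal to ``$Nm$ in the cofinal sequence'' does not obviously achieve this). This conversion is exactly where the structure of the Iitaka map enters the proof; without it the contradiction collapses, so the write-up has real gaps even though the overall strategy is the standard and repairable one.
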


In this paper, we only deal with the case $L=\mathcal{O}_X(K_X)$ and
simply write $\kappa(X)=\kappa(X,\mathcal{O}_X(K_X))$. The following
results are important for our induction in the proof of the main
theorem.

\begin{lemma}\label{iitaka}
Let $X$ and $Y$ be smooth projective varieties and \ $T$ an
algebraic variety. Assume that $f:X\rightarrow Y $ is the Iitaka
fibration of $(X,K_X)$ and $\varphi:Y\rightarrow T$ is a surjective
morphism. For a very general closed point $t\in T$, let $
V=\varphi^{-1}(t)$ and $W=f^{-1}(\varphi^{-1}(t))$, then the
restriction morphism $f_W: W\rightarrow V$ is the Iitaka fibration
of $(W,K_W)$.
\end{lemma}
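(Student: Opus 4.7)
The plan is to verify that $f_W: W \to V$ is birationally equivalent to the Iitaka fibration of $W$ by transferring the Iitaka fibration structure of $f: X \to Y$ under restriction to $V = \varphi^{-1}(t)$, and by bounding $\kappa(W)$ from above and below.

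For very general $t \in T$, generic smoothness applied to the dominant morphisms $\varphi: Y \to T$ and $\varphi \circ f: X \to T$ ensures that both $V$ and $W = (\varphi \circ f)^{-1}(t)$ are smooth; adjunction for a smooth fiber over a smooth point of the target then gives $K_V = K_Y|_V$ and $K_W = K_X|_W$. The generic fiber of $f_W$ coincides with a generic fiber $F$ of $f$, so $\kappa(F) = 0$. By the easy addition formula this already yields
$$\kappa(W) \leq \dim V + \kappa(F) = \dim V.$$

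For the reverse inequality, since $f$ is the Iitaka fibration of $X$, after passing to suitable birational models one may find a positive integer $m_0$, an ample divisor $A$ on $Y$ and an effective divisor $E$ on $X$ with $m_0 K_X \sim f^* A + E$ (the big pullback class is replaced by an ample one plus an effective via Kodaira's lemma). For very general $t$ the fiber $W$ avoids the support of $E$, so restriction gives $m_0 K_W \sim f_W^*(A|_V) + E|_W$ with $A|_V$ ample on $V$ and $E|_W$ effective, forcing $\kappa(W) \geq \kappa(V, A|_V) = \dim V$. Hence $\kappa(W) = \dim V$.

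To conclude that $f_W$ is birational to the Iitaka fibration of $W$, take $l \gg 0$ so that $|l A|_V|$ embeds $V$ into projective space. Multiplication by the canonical section of $l E|_W$ produces an injection $H^0(V, l A|_V) \hookrightarrow H^0(W, l m_0 K_W)$, and the corresponding sub-linear system of $|l m_0 K_W|$ defines a rational map factoring as $W \xrightarrow{f_W} V \hookrightarrow \mathbb{P}^N$. Hence $\phi_{l m_0 K_W}$ dominates $V$; since its image has dimension at most $\kappa(W) = \dim V$, this image is birational to $V$ and $\phi_{l m_0 K_W}$ is birational to $f_W$.

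The main obstacle is arranging a birational model of $f: X \to Y$ compatible with $\varphi: Y \to T$ on which the decomposition $m_0 K_X \sim f^*A + E$ exists, and then checking that the finitely many closed subvarieties of $Y$ (or $X$) that must be avoided, namely the image of the exceptional locus of the model, the support of $E$, the base loci of $|l A|$ on $V$, and so on, contribute only a countable union of proper closed subvarieties of $T$ whose complement is precisely the "very general" set of $t$ where the argument is valid.
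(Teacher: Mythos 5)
Your proposal is correct and follows essentially the same route as the paper: after arranging (by generic choice of $t$) that $W$ and $V$ are smooth with $K_W=K_X|_W$ and that the very general fiber of $f_W$ is a fiber of $f$ (so of Kodaira dimension zero), both arguments prove $\kappa(W)\ge\dim V$ by restricting a relation $m_0K_X\sim f^*A+E$, with $A$ ample on $Y$ and $E\ge 0$, to $W$ and comparing $h^0(W,\mathcal{O}_W(im_0K_W))$ with $h^0(V,\mathcal{O}_V(iA|_V))$. The only real differences are cosmetic: the paper asserts $mK_X\ge f^*H$ on the given model itself (a section of $m_0K_X-f^*A$ already exists on $X$, so the ``main obstacle'' about compatible birational models that you flag does not arise, and one needs only that $W$ is not contained in $\mathrm{Supp}\,E$ rather than that it avoids it), and it concludes via the standard characterization of the Iitaka fibration from $\kappa(W)=\dim V$ and the fiber condition, instead of your explicit sub-linear-system argument, whose final birationality claim would anyway need that same factorization/connectedness reasoning to be complete.
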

\begin{proof}By assumption, we have the following diagram
$$
\xymatrix{W \ar @{^{(}->}[r] \ar[d]^{f_W} & X \ar[d]^f\\
V  \ar @{^{(}->}[r] \ar[d] & Y \ar[d]^{\varphi}\\
t \ar @{} [r] |{\in} & T } $$ Since $t$ is very general, we may
assume $V$ and $W$ are smooth and the very general fiber of $f_ W$
is just the very general fiber of $f$. Hence, in order to prove that
$f_W$ is the Iitaka fibration, we only need to show dim$V\leq
\kappa(W)$.

Fix an ample divisor $H$ on $Y$, then there exists a positive
integer $m$ such that $mK_X\geq f^*(H)$. Since $V$ is a smooth
fiber, we have $K_X|_W=K_ W$. It follows that $mK_W\geq
f^*_W(H|_V)$, which implies
$$h^0(W,\mathcal{O}_W(imK_W))\geq h^0(V,\mathcal{O}_V(iH|_V))  \ \ \ \ \ \forall\  i\in \mathbb{Z}_{>0}.$$
Since $H|_V$ is ample on $V$, then $\kappa(W)\geq \mbox{dim}V$.
Therefore, $f_W$ is the Iitaka fibration.
\end{proof}

\begin{theorem}[\mbox{\cite[Theorem 4.4]{lai09}}]\label{lai}
Let $X$ be a $\mathbb{Q}$-factorial normal projective variety with
non-negative Kodaira dimension and at most terminal singularities.
Suppose that the general fiber $F$ of the Iitaka fibration has a
good minimal model, then $X$ has a good minimal model.
\end{theorem}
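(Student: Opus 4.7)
The plan is to construct a good minimal model of $X$ by running a relative minimal model program over the Iitaka base and then descending via the canonical bundle formula. First, after a log resolution we may assume the Iitaka fibration $f \colon X \to Y$ is a morphism with $Y$ smooth; the hypothesis on the general fiber is preserved under such birational modifications. Recall that the general fiber $F$ has Kodaira dimension zero and, by assumption, admits a good minimal model.

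Next I would run a $K_X$-MMP relative to $Y$ with scaling of an ample divisor. By BCHM combined with results on relative good models (of Hacon--Xu type), the hypothesis that $F$ admits a good minimal model is precisely what is needed both to guarantee termination of this relative MMP and to force the outcome to be a \emph{relative} good minimal model $X'/Y$: that is, $K_{X'}$ is semi-ample over $Y$ and $X'$ is still $\mathbb{Q}$-factorial with terminal singularities. The moral reason this works is that good models deform in families, so the existence of one on the generic fiber produces one relatively after a sufficiently careful MMP.

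Taking the relative ample model $g \colon X' \to Z$ over $Y$ produces a $\mathbb{Q}$-Cartier divisor $L$ on $Z$, ample over $Y$, with $K_{X'} \sim_\mathbb{Q} g^{*}L$. Because $K_F$ has Iitaka dimension zero, the induced map $Z \to Y$ is birational, so $\dim Z = \kappa(X)$ and $L$ is big on $Z$. By the canonical bundle formula of Fujino--Mori and Ambro, after further birational modifications of $Z$ we may write $L \sim_\mathbb{Q} K_Z + B_Z + M_Z$ with $(Z,B_Z)$ klt and $M_Z$ nef. Then $K_Z + B_Z + M_Z$ is big and nef; using (generic) semi-ampleness of the moduli part $M_Z$ to absorb it into a klt boundary, the base-point-free theorem shows $K_Z + B_Z + M_Z$ is semi-ample. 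Pulling back by $g$, $K_{X'}$ is semi-ample, so $X'$ is a good minimal model of $X$.

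The main obstacle is the relative step: extracting, from the hypothesis on the single generic fiber, both termination of the relative MMP over $Y$ and the semi-ampleness of $K_{X'}$ over $Y$. This amounts to spreading the good-model structure on $F$ to a neighborhood and then globally, which requires combining special termination of flips with techniques for lifting semi-ampleness along fibrations; this is the technical heart of the argument.
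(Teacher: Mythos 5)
The paper does not prove Theorem \ref{lai} at all: it is quoted verbatim from \cite[Theorem 4.4]{lai09}, so your proposal has to be measured against Lai's argument. Your first steps do follow that route: make the Iitaka fibration a morphism, run a $K_X$-MMP with scaling over $Y$, and use the good minimal model of the generic fiber to obtain a relative good minimal model $X'/Y$ together with its relative ample model $g\colon X'\to Z$, $K_{X'}\sim_{\mathbb{Q}}g^*L$ with $Z\to Y$ birational and $L$ big. (Be aware that this "relative step" is itself the technical core of Lai's paper, so quoting it as BCHM plus Hacon--Xu-type results is acceptable only as a citation, not as a proof.)

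The genuine gap is in your descent to $Z$. The divisor $L$ is ample only \emph{over} $Y$; it is big on $Z$ because $\kappa(X)=\dim Z$, but there is no reason whatsoever for it to be nef, since the relative MMP over $Y$ cannot make $K_{X'}$ globally nef. Hence the assertion that $K_Z+B_Z+M_Z\sim_{\mathbb{Q}}L$ is "big and nef", and the ensuing appeal to the base-point-free theorem, fail. A concrete test case: $X=F\times S$ with $F$ a K3 surface and $S$ a non-minimal surface of general type; after the birational adjustments of your first step, $X$ is already its own relative good minimal model over the Iitaka base, $Z$ is birational to $S$, and $L$ is big but not nef, so nothing is semiample on $Z$ itself --- the good minimal model of $X$ only appears after running an MMP on the base. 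This is exactly what Lai does: using the canonical bundle formula he writes a perturbation of $L$ as $K_Z+\Delta$ with $(Z,\Delta)$ klt and big (the nef moduli part is absorbed into the boundary using bigness; note that semiampleness of $M_Z$, which you invoke, is an open conjecture and is not needed), applies BCHM to get a good log minimal model of the base, and then lifts that model back to a birational model of $X$ by a comparison-of-models/negativity-lemma argument. It is this lifting step, not a base-point-free statement on $Z$, that produces a good minimal model of $X$, and your sketch elides precisely this point; the closing paragraph of your proposal locates the difficulty in the relative step over $Y$, whereas the harder unaddressed issue is the passage from the relative statement over $Y$ to the absolute one.
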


\section{Canonical bundle formula}
In this section, we collect some of the results regarding the direct
image of the relative dualizing sheaf.

Let $X$ and $Y$ be smooth projective varieties and $f: X\rightarrow
Y$ an algebraic fiber space with generic fiber $F$ of Kodaira
dimension zero. Let $b$ be the smallest integer such that the $b$-th
plurigenus $h^0(F,bK_F)$ of $F$ is non-zero. Then there exists a
$\mathbb{Q}$-divisor $L_{X/Y}$ on $Y$ such that
$$\mathcal{O}_Y(\lfloor iL_{X/Y}\rfloor)\cong (f_*\mathcal{O}_X(ibK_{X/Y}))^{**}$$
and
$$H^0(Y,\mathcal{O}_Y(\lfloor ibK_Y+iL_{X/Y}\rfloor))\cong H^0(X,\mathcal{O}_X(ibK_X))$$
for all $i>0$. We may write the divisor $L_{X/Y}$ as
$$L_{X/Y}=L^{ss}_{X/Y}+\Delta,$$
where $L^{ss}_{X/Y}$ is a $\mathbb{Q}$-Cariter divisor, called the
$\mathbf{semistable}$ $\mathbf{part}$ or the $\mathbf{moduli}$
$\mathbf{part}$, and $\Delta$ is an effective $\mathbb{Q}$-divisor,
called the $\mathbf{boundary}$ $\mathbf{part}$. Moreover, if $f$
satisfies the conditions as in \cite[4.4]{fm00}, then $L^{ss}_{X/Y}$
is nef and $\Delta$ has simple normal crossings support. Therefore,
replacing $Y$ by a smooth birational model, we may always assume
that $L^{ss}_{X/Y}$ is nef and
$\Delta$ is a simple normal crossings divisor.\\

 In applications, it
is important to bound the denominator of $L^{ss}_{X/Y}$.
\begin{theorem}[\mbox{\cite[Theorem 3.1]{fm00}}]
Under the above notations and assumptions, let $E\rightarrow F$ be
the cover associated to the $b$-th root of the unique element of
$|bK_F|$. Let $\overline{E}$ be a nonsingular projective model of
$E$ and let $B_m$ be its $m$-th Betti number. Then there is a
natural number $N=N(B_m)$ depending only on $B_m$ such that
$NL^{ss}_{X/Y}$ is a divisor.
\end{theorem}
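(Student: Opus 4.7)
The plan is to reduce to a family of $K$-trivial varieties via the $b$-th root cover construction, and then bound the fractional boundary contributions of the resulting moduli part by controlling the monodromy of a variation of Hodge structure on the middle cohomology of $\overline{E}$.

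First I would globalize the fiberwise $b$-th root cover. Since $h^0(F,bK_F)=1$, the direct image $f_*\mathcal{O}_X(bK_{X/Y})$ is invertible on a dense open $Y^\circ\subset Y$; let $\mathcal{L}$ be a line bundle extending it. The canonical section (unique up to scalar in each fiber) of $bK_{X/Y}\otimes f^*\mathcal{L}^{-1}$ allows one to form a relative $b$-th root cover, and after normalization, base change and a log resolution one obtains a smooth fibration $g:\overline{\mathcal{E}}\to Y$ with generic fiber $\overline{E}$ and $K_{\overline{E}}\sim_{\mathbb{Q}} 0$. A direct ramification computation shows that $L^{ss}_{X/Y}$ and $L^{ss}_{\overline{\mathcal{E}}/Y}$ are related by a $\mathbb{Q}$-linear identity whose denominator depends only on $b$, so the task reduces to bounding the denominator of $L^{ss}_{\overline{\mathcal{E}}/Y}$.

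For this new fibration the generic fiber is $K$-trivial, so on $Y^\circ$ the moduli part coincides, up to a twist of controlled torsion, with the bottom piece of the Hodge filtration of the variation $R^m g_*\mathbb{Q}$ for $m=\dim\overline{E}$. Its boundary coefficients are computed from the residues of the Gauss--Manin connection on the Deligne canonical extension, i.e.\ from the local monodromy operators $T_i$ on $H^m(\overline{E},\mathbb{Q})$ around each component of the discriminant. The crux, and what I expect to be the main obstacle, is to bound the orders of these operators purely in terms of $B_m=\dim H^m(\overline{E},\mathbb{Q})$. By Landman's monodromy theorem each $T_i$ is quasi-unipotent, so the semisimple part $T_i^{ss}$ is a finite order element of $\mathrm{Aut}\bigl(H^m(\overline{E},\mathbb{Z})\bigr)\subset GL_{B_m}(\mathbb{Z})$. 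The classical bound on torsion elements of $GL_{B_m}(\mathbb{Z})$ (their eigenvalues are roots of unity of some order $d$ with $\varphi(d)\leq B_m$) then supplies a uniform $N=N(B_m)$ annihilating all of the $T_i^{ss}$.

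Finally, after a Kawamata cyclic cover $Y'\to Y$ of degree $N$ ramified along the discriminant, the pulled back monodromies become unipotent, semistable reduction applies, and the pullback of $L^{ss}_{\overline{\mathcal{E}}/Y}$ is realized as an integral Weil divisor on $Y'$. Descending and using that the moduli part behaves well under finite base change gives that $NL^{ss}_{\overline{\mathcal{E}}/Y}$ is integral on $Y$, and then the reduction of the first step shows that some $N=N(B_m)$ makes $NL^{ss}_{X/Y}$ a divisor. The geometric ingredients (existence of the cover, Hodge-theoretic interpretation of the moduli part, semistable reduction) are all standard in the theory of variations of Hodge structure; the real content is turning Landman quasi-unipotence into an effective bound on $N$ purely in terms of $B_m$ via the arithmetic of $GL_n(\mathbb{Z})$-torsion.
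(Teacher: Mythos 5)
Your proposal is essentially the argument behind the cited result: this paper does not prove the statement but quotes it from Fujino--Mori \cite{fm00}, whose proof likewise reduces to the case $b=1$ via the cyclic cover, identifies the semistable part Hodge-theoretically (canonical extension of the bottom piece $H^0(\omega)$ of the VHS on $H^{\dim \overline{E}}$), and bounds the denominators by quasi-unipotence of local monodromy together with the fact that a finite-order element of $GL_{B_m}(\mathbb{Z})$ has eigenvalues that are roots of unity of order $d$ with $\varphi(d)\leq B_m$, so one may take $N=\operatorname{lcm}\{d:\varphi(d)\leq B_m\}$. The only slip is inessential: the fiber $\overline{E}$ need not satisfy $K_{\overline{E}}\sim_{\mathbb{Q}}0$; what the reduction actually gives, and all the argument needs, is $h^0(\overline{E},K_{\overline{E}})=1$, so that the relevant Hodge piece is a line.
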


Let $\Delta=\sum_{P}s_P P$. We have the following result about the
coefficients $s_P$.
\begin{proposition}[\mbox{\cite[Propostion 2.8]{fm00}}]
Under the notations and the assumptions as above, let $N \in
\mathbb{Z}_{>0}$ be such that $NL^{ss}_{X/Y}$ is a Weil divisor.
Then we have
$$L_{X/Y}=L^{ss}_{X/Y}+\sum_{P}s_PP,$$
where $s_P \in \mathbb{Q}$ for every codimension one point $P$ of
$Y$ is such that
\begin{enumerate}
\item For each $P$, there exists $u_P,v_P \in \mathbb{Z}_{>0}$, such that
$0<v_P\leq bN$ and\\
 $s_P=(bNu_P-v_P)/(Nu_P)$.
\item $s_P=0$ if $f^*(P)$ has only canonical singularities or if $X\rightarrow
Y$ has a semistable resolution in a neighbourhood of $P$.
\end{enumerate}
Moreover, $s_P$ depends only on $f|_{f^{-1}(U)}$ where $U$ is an
open set of $Y$ containing $P$.

\end{proposition}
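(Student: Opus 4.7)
My approach follows the standard template for canonical-bundle-formula calculations: localize, reduce to a curve base, and compute the coefficient $s_P$ via local generators of the pushforward sheaves.

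The final ``moreover'' clause is formal. Both $(f_*\mathcal{O}_X(ibK_{X/Y}))^{**}$ and $K_{X/Y}$ are local over $Y$, so the formation of $L_{X/Y}$ (and in particular the coefficient $s_P$) commutes with restriction to any open neighborhood $U$ of the generic point of $P$. I may therefore shrink $Y$ around $P$ and then, by cutting with $\dim Y-1$ general hyperplane sections through a general point of $P$, reduce to the case where $Y$ is a smooth curve and $P$ is a closed point, using that the canonical bundle formula restricts compatibly to a general transversal curve.

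With $Y$ a smooth curve, let $\sigma_i$ be a local generator of $f_*\omega_{X/Y}^{ib}$ near $P$ and set $\sigma:=\sigma_1$. Since $\kappa(F)=0$ and $b$ is minimal with $h^0(F,bK_F)\neq 0$, one has $h^0(F,ibK_F)=1$ for all $i\geq 1$, so the multiplication map $(f_*\omega_{X/Y}^{b})^{\otimes i}\to f_*\omega_{X/Y}^{ib}$ is injective on stalks; hence $\sigma^i=c_i\sigma_i$ for unique $c_i\in\mathcal{O}_{Y,P}$. The sequence $\nu_i:=\operatorname{ord}_P(c_i)$ is subadditive (since $c_{i+j}$ divides $c_ic_j$ up to a unit), so $\alpha:=\lim_{i\to\infty}\nu_i/i$ exists in $\mathbb{Q}_{\geq 0}$. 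Unwinding the definition of $L_{X/Y}$ relative to a local generator of the Cartier divisor $NL^{ss}_{X/Y}$ (whose existence is guaranteed by the previous theorem), I obtain $Ns_P=bN-v_P/u_P$, where $u_P\in\mathbb{Z}_{>0}$ is the denominator in lowest terms (equivalently, the order of the monodromy action of a small loop around $P$ on $H^0(F,bK_F)$, detected via the $b$-th root cyclic cover $E\to F$) and $v_P\in\mathbb{Z}_{>0}$ is the numerator. The a priori bounds $0\leq\alpha<b$ force $0<v_P\leq bN$ and yield the stated formula $s_P=(bNu_P-v_P)/(Nu_P)$.

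For the vanishing in assertion (2), if $f^*P$ has only canonical singularities then pluricanonical sections extend freely across $P$, so $\sigma^i$ itself generates $f_*\omega_{X/Y}^{ib}$ near $P$, each $c_i$ is a unit, $\alpha=0$, and $s_P=0$; the semistable case is classical via unipotent monodromy of the degeneration. The main obstacle is the precise identification of $\alpha$ with $v_P/(Nu_P)$ in the claimed form; bounding the denominator by $Nu_P$ requires careful bookkeeping of the $b$-th root cyclic cover and its monodromy representation, which is the technical heart of Fujino--Mori's original argument.
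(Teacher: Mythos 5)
This proposition is not proved in the paper at all: it is quoted verbatim from Fujino--Mori \cite[Proposition 2.8]{fm00}, so the only ``proof'' the paper offers is the citation. Measured against the actual argument in \cite{fm00}, your sketch has the right general shape (localize at the codimension-one point, work over the resulting DVR, compare local generators of $(f_*\mathcal{O}_X(ibK_{X/Y}))^{**}$ with powers of a generator of $f_*\mathcal{O}_X(bK_{X/Y})$), but it stops exactly where the proposition begins. The whole content of assertion (1) is the precise arithmetic form of the coefficient: denominator dividing $Nu_P$ and $0<v_P\leq bN$. Your construction produces a subadditive sequence $\nu_i$ and a limit $\alpha=\lim\nu_i/i$, but you never show $\alpha$ is rational, never connect it to $s_P$ (which is defined via $L_{X/Y}-L^{ss}_{X/Y}$, equivalently via the log canonical threshold of $f^*P$, not directly via your $\nu_i$), and never actually use the hypothesis that $NL^{ss}_{X/Y}$ is integral except nominally. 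You acknowledge this yourself (``the technical heart of Fujino--Mori's original argument''), but that heart is the proof: in \cite{fm00} one takes a finite base change $Y'\to Y$ of degree $u_P$ ramified over $P$ after which the family admits a semistable model in codimension one, invokes the base-change property of the semistable part ($L^{ss}_{X'/Y'}=\pi^*L^{ss}_{X/Y}$) together with the vanishing of the boundary coefficient in the semistable case, and only then does the integrality of $NL^{ss}_{X/Y}$ force $s_P=(bNu_P-v_P)/(Nu_P)$ with $v_P$ a positive integer at most $bN$. None of this covering/semistable-reduction mechanism appears in your write-up.

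Two further points. First, your parenthetical identification of $u_P$ as ``the order of the monodromy action of a small loop around $P$ on $H^0(F,bK_F)$'' is not what $u_P$ is in \cite{fm00}; it arises as the ramification index of the base change needed for semistable reduction (tied to the multiplicities of $f^*P$ after the $b$-th root cover), and the monodromy that matters is the one on the cohomology of the cover $E$, which is what produces $N$ in the first place. Second, your reduction from the DVR to an honest curve by cutting with general hyperplanes tacitly assumes that the semistable/moduli part is compatible with restriction to a general transversal curve; that is itself a nontrivial base-change statement (of the same nature as Lemma \ref{restriction} and Ambro's results), not something you may assert for free --- and it is unnecessary, since localizing at the generic point of $P$ already suffices. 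The vanishing statements in (2) are likewise only gestured at (``sections extend freely,'' ``classical via unipotent monodromy''). So while the scaffolding is sensible, the proposal does not establish the proposition.
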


For convenience, we write $M_Y=L^{ss}_{X/Y}/b$ and $B_Y=\Delta /b$,
then all non-zero coefficients of $B_Y$ are contained in
$$A(b,N):=\{\frac{bNu-v}{bNu}|u,v\in\mathbb{Z}_{>0};0<v\leq bN\}\backslash\{0\}.$$

\begin{lemma}[\mbox{\cite[Lemma 1.2]{vz07}}]\label{vz} Under the notations as above, the following hold true.
\begin{enumerate}
\item The set $A(b,N)$ is a DCC set, and one has
$$\frac{1}{bN}\leq \inf A(b,N).$$
\item $(Y,B_Y)$ is log smooth and has klt singularities.
\item The $\mathbb{Q}$-divisor $K_Y+M_Y+B_Y$ is big.
\item For every $s\in \mathbb{Z}_{>0},$ we have $$H^0(Y,\mathcal{O}_Y(\lfloor
sb(K_Y+M_Y+B_Y)\rfloor))\cong H^0(X,\mathcal{O}_X(sbK_X));$$ further
the map $\phi_{sbK_X}$ is birational to the Iitaka fibration $f$ if
and only if $|sb(K_Y+M_Y+B_Y)|$ gives rise to a birational map.
\item $bNM_Y$ is an integral nef Cartier divisor.
\item If $m\in \mathbb{Z}_{>0}$ is divisible by $bN$, then $\lfloor mB_Y \rfloor\geq (m-1)B_Y
$.
\end{enumerate}
\end{lemma}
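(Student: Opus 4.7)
The plan is to verify each of the six assertions separately. Only (1) and (6) require real arithmetic; the remaining items are bookkeeping from the canonical bundle formula and the preceding proposition.

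For (1), I would write an element $a \in A(b,N)$ as $1 - v/(bNu)$ with $1 \leq v \leq bN$ and $u \geq 1$. The lower bound $1/(bN)$ is attained at $u = 1$, $v = bN - 1$. For the DCC, suppose $\{a_n\}$ is strictly decreasing with $a_n = 1 - v_n/(bN u_n)$; then $v_n/(bN u_n)$ strictly increases. Since $v_n \leq bN$ forces $v_n/(bN u_n) \leq 1/u_n$, unbounded $u_n$ would give $a_n \to 1$, a contradiction. Hence $u_n$ is bounded, and with $u$ in a finite range only finitely many $v$ are available, precluding infinite strict descent.

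Items (2)--(5) are essentially restatements. For (2), after the birational replacement of $Y$ the divisor $\Delta$ has simple normal crossings, so $(Y, B_Y)$ is log smooth, and each coefficient $s_P/b = 1 - v_P/(bN u_P)$ is strictly less than $1$, so the pair is klt. For (4), the identity $L_{X/Y} = bM_Y + bB_Y$ gives $sbK_Y + sL_{X/Y} = sb(K_Y + M_Y + B_Y)$, so the $H^0$ isomorphism is exactly the canonical bundle formula (Theorem 3.1); the birational equivalence statement then follows because $\phi_{sbK_X}$ factors through $f$ up to birational equivalence. Item (3) follows from (4) together with $\kappa(X) = \dim Y$. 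Item (5) is immediate: $bN M_Y = N L^{ss}_{X/Y}$ is a Weil divisor by Theorem 3.1 and is nef by the standing assumption on $Y$.

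The only honest calculation is (6). Set $c := s_P/b = 1 - v_P/(bN u_P)$; the inequality $\lfloor mc \rfloor \geq (m-1)c$ is equivalent to $\{mc\} \leq c$, where $\{\cdot\}$ denotes the fractional part. Writing $m = bNk$ we have $mc = bNk - kv_P/u_P$, so $\{mc\} = \{-kv_P/u_P\}$, which is either $0$ or of the form $1 - r/u_P$ with $1 \leq r \leq u_P - 1$, hence at most $1 - 1/u_P$. Since $v_P \leq bN$ yields $c \geq 1 - 1/u_P$, we conclude $\{mc\} \leq c$, as required. The one mild subtlety is this fractional-part manipulation; no step uses anything beyond Theorem 3.1, the explicit description of $s_P$ in the preceding proposition, and the nefness of $L^{ss}_{X/Y}$.
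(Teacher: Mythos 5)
Your proof is correct, and there is nothing internal to compare it with: the paper does not prove this lemma at all, but simply quotes it from \cite[Lemma 1.2]{vz07}; your verification is essentially the elementary computation carried out in that reference, using only the canonical bundle formula, the Fujino--Mori description of the coefficients $s_P$, and the nefness of $L^{ss}_{X/Y}$ after the birational replacement of $Y$. A few points worth tightening. In (1), exhibiting the value $1/(bN)$ at $u=1$, $v=bN-1$ does not by itself give $1/(bN)\leq \inf A(b,N)$; you should also note that every element with $u\geq 2$ satisfies $1-v/(bNu)\geq 1-1/u\geq 1/2\geq 1/(bN)$, which incidentally shows the stated bound tacitly assumes $bN\geq 2$ (for $bN=1$ the set is $\{1-1/u : u\geq 2\}$ and the inequality as written fails --- an artifact of the statement, not of your argument); your boundedness argument for the DCC is fine. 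In (5), integrality of $NL^{ss}_{X/Y}$ plus nefness gives a nef Cartier divisor because $Y$ is smooth, so Weil and Cartier coincide --- worth saying explicitly. Your reduction of (6) to $\{mc\}\leq c$ and the computation $\{mc\}=\{-kv_P/u_P\}\leq 1-1/u_P\leq 1-v_P/(bNu_P)=c$ is exactly the right calculation, including the trivial case $\{mc\}=0$.
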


\begin{lemma}\label{canonical}
Under the same notations and assumptions as in Lemma \ref{vz},
$(Y,M_Y+B_Y)$ has a log terminal model and a log canonical model.
\end{lemma}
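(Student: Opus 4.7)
The lemma is essentially an application of BCHM~\cite{bchm06}, with the caveat that $M_Y$ is only nef (Lemma~\ref{vz}(5)) and need not be effective, so $(Y, B_Y + M_Y)$ is not literally a klt pair with big boundary. The plan is to perturb $M_Y$ to an effective divisor by adding a small ample, apply BCHM to the perturbed pair, and pass to a limit.

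First, I would fix an ample $\mathbb{Q}$-divisor $H$ on $Y$. For each rational $\epsilon > 0$ the divisor $M_Y + \epsilon H$ is ample, so by Bertini I can choose a general effective $\mathbb{Q}$-divisor $M'_\epsilon \sim_{\mathbb{Q}} M_Y + \epsilon H$ with $(Y, B_Y + M'_\epsilon)$ log smooth and klt; this uses that $(Y, B_Y)$ already has this structure by Lemma~\ref{vz}(2). The boundary $B_Y + M'_\epsilon$ is big, and
\[
K_Y + B_Y + M'_\epsilon \sim_{\mathbb{Q}} K_Y + B_Y + M_Y + \epsilon H
\]
is big by Lemma~\ref{vz}(3) (sum of a big divisor and an ample). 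BCHM~\cite{bchm06} therefore produces both a log terminal model $Y_\epsilon$ and a log canonical model for the perturbed klt pair $(Y, B_Y + M'_\epsilon)$.

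Second, to return from the perturbed models to a model of the original pair, I would run the $(K_Y + B_Y + M_Y)$-MMP with scaling by $H$. Each $(K_Y + B_Y + M_Y)$-negative extremal ray is also $(K_Y + B_Y)$-negative (since $M_Y$ is nef), hence exists by the cone theorem for the klt pair $(Y, B_Y)$; BCHM applied to $(Y, B_Y + M'_\lambda)$, with $\lambda$ the current scaling threshold, produces the required contraction or flip. BCHM's termination for the perturbed pair then ensures that only finitely many steps with $\lambda \geq \epsilon_0$ occur for any fixed $\epsilon_0 > 0$; letting $\epsilon_0 \to 0$, the MMP terminates at a birational contraction $Y \dashrightarrow Y'$ along which $K_{Y'} + B_{Y'} + M_{Y'}$ is nef, which is the log terminal model. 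The log canonical model is its ample model: the nef and big $\mathbb{Q}$-divisor $K_{Y'} + B_{Y'} + M_{Y'}$ on the klt variety $(Y', B_{Y'})$ is semi-ample by a base-point free theorem applied with the additional nef polarization $M_{Y'}$.

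The principal obstacle is this last step, namely transferring termination of MMP with scaling from the perturbed klt pairs (where BCHM applies directly) to the formal divisor $K_Y + B_Y + M_Y$ that lacks an effective representative for $M_Y$. The compatibility of the scaling thresholds $\lambda_i$ with the ample perturbations $M'_{\lambda_i}$ is what makes this transfer work: each step of the scaling MMP is secretly a step of BCHM's MMP for an honest klt pair.
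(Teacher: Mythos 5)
Your first step is fine, but the second step has a genuine gap, and it is exactly the point your last paragraph waves at. Knowing that each step of your $(K_Y+B_Y+M_Y)$-MMP with scaling of $H$ is a step of an honest klt MMP for $(Y,B_Y+M'_\lambda)$ gives the \emph{existence} of each contraction and flip, but not \emph{termination}: the number of steps of the BCHM MMP for $(Y,B_Y+M'_\epsilon)$ may grow without bound as $\epsilon\to 0$, so "only finitely many steps with $\lambda\geq\epsilon_0$ for each fixed $\epsilon_0$" does not imply the whole sequence is finite --- the thresholds could decrease to $0$ through infinitely many steps. BCHM's termination with scaling requires a big boundary (or an analogous hypothesis), and $B_Y+M_Y$ is not known to be big; only $K_Y+M_Y+B_Y$ is. A second, related problem: after the first step the pushforward of $M_Y$ need no longer be nef, which undercuts both your identification of $(K+B+M)$-negative rays with $(K+B)$-negative rays at later steps and the base-point-free argument at the end ($2L-(K+B)=L+M$ is nef and big only if $M$ stays nef). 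Handling these issues properly is essentially the theory of generalized pairs, which is far more than the lemma needs.

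The paper avoids all of this by perturbing in the direction of the big divisor itself rather than an arbitrary ample $H$. Write $K_Y+M_Y+B_Y\sim_{\mathbb{Q}}A+E$ with $A$ ample and $E$ effective; then $(1+\epsilon)(K_Y+M_Y+B_Y)\sim_{\mathbb{Q}}K_Y+B_Y+\epsilon E+\epsilon'A'$, where $\epsilon'A'\sim_{\mathbb{Q}}M_Y+\epsilon A$ is ample and general, so $(Y,B_Y+\epsilon E+\epsilon'A')$ is klt with big boundary. Since its log canonical divisor is an exact positive multiple of $K_Y+M_Y+B_Y$, the log terminal and log canonical models produced by \cite{bchm06} for this auxiliary pair are verbatim models of $(Y,M_Y+B_Y)$: no scaling MMP, no limit, and no nefness bookkeeping along the MMP are needed. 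If you modify your first step so that the ample perturbation is proportional to $K_Y+M_Y+B_Y$ (i.e.\ replace $\epsilon H$ by $\epsilon(A+E)$), your argument closes up and your entire second step becomes unnecessary.
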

\begin{proof}
Since $K_Y+M_Y+B_Y$ is big, we may write
$K_Y+M_Y+B_Y\sim_{\mathbb{Q}}A+E$, where $A$ is an ample
$\mathbb{Q}$-divisor and $E$ is an effective $\mathbb{Q}$-divisor.
By (2) of Lemma \ref{vz}, $(Y,B_Y)$ is klt, so $(Y,B_Y+\epsilon E)$
is also klt for $0<\epsilon\ll 1$. By (5) of Lemma \ref{vz}, $M_Y$
is nef, so $M_Y+\epsilon A$ is ample. Thus there exist a
sufficiently ample divisor $A'$ and a rational number
$0<\epsilon'\ll 1$ such that $M_Y+\epsilon
A\sim_{\mathbb{Q}}\epsilon'A'$ and $(Y,B_Y+\epsilon E+\epsilon'A')$
is also klt. It follows that
\begin{eqnarray*}
(1+\epsilon)(K_Y+M_Y+B_Y)&\sim_{\mathbb{Q}}&K_Y+M_Y+B_Y+\epsilon
A+\epsilon E\\
&\sim_{\mathbb{Q}}&K_Y+B_Y+\epsilon E+\epsilon'A'.
\end{eqnarray*}
By \cite{bchm06}, $(Y,B_Y+\epsilon E+\epsilon'A')$ has a log
terminal model $Y^m$ and a log canonical model $Y^c$. It is easy to
see that $Y^m$ (resp. $Y^c$) is also a log terminal model (resp. log
canonical model) of $(Y,M_Y+B_Y)$.
\end{proof}

\begin{lemma}\label{restriction}
Under the notations and assumptions as in Lemma \ref{iitaka}, the
boundary part $B_V$ of \,$f_W$ is the restriction of $B_Y$ to $V$
and the moduli part $M_V$ of \,$f_W$ is $\mathbb{Q}$-linearly
equivalent to the restriction of $M_Y$.
\end{lemma}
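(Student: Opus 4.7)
The plan is to compare the canonical bundle formulas for $f$ and $f_W$ by exploiting that $V$ is a very general fiber of $\varphi$. First I would observe that because $V$ (respectively $W$) is a fiber of $\varphi$ (respectively $\varphi\circ f$), its normal bundle in $Y$ (respectively $X$) is trivial. Hence by adjunction $K_V = K_Y|_V$ and $K_W = K_X|_W$, which yields $K_{W/V} = K_{X/Y}|_W$ as $\mathbb{Q}$-divisors.

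For the moduli part, I would apply cohomology and base change to the coherent sheaf $f_*\mathcal{O}_X(ibK_{X/Y})$. For very general $V\subset Y$, the natural restriction map $f_*\mathcal{O}_X(ibK_{X/Y})|_V \to (f_W)_*\mathcal{O}_W(ibK_{W/V})$ is an isomorphism outside a closed subset of codimension at least two in $V$. Taking reflexive hulls and using the defining isomorphism $\mathcal{O}_Y(\lfloor iL_{X/Y}\rfloor)\cong (f_*\mathcal{O}_X(ibK_{X/Y}))^{**}$ together with its analogue on $V$, this gives $L_{W/V}\sim_{\mathbb{Q}} L_{X/Y}|_V$ as $\mathbb{Q}$-divisors on $V$.

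For the boundary part, I would invoke the locality statement from Proposition 3.2, namely that $s_P$ depends only on $f|_{f^{-1}(U)}$ for a small neighborhood $U$ of $P$. A codimension one prime $Q\subset V$ falls into one of two cases. If $Q = (P\cap V)_{\mathrm{red}}$ for some codimension one prime $P\subset Y$, then $P$ must dominate $T$ (otherwise $P\cap V = \emptyset$ for very general $V$), and the restriction of $f$ over a neighborhood of $P$ pulls back to the restriction of $f_W$ over a neighborhood of $Q$; by locality the coefficient is preserved, so $s_Q = s_P$. Otherwise, $f_W^*Q$ has canonical singularities (or $f_W$ is semistable in a neighborhood of the generic point of $Q$), and hence $s_Q = 0$. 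This shows $B_V = B_Y|_V$, and consequently $M_V = L_{W/V}^{ss}/b \sim_{\mathbb{Q}} L_{X/Y}^{ss}|_V/b = M_Y|_V$.

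The most delicate step will be the base change argument for the moduli part, since $f_*\mathcal{O}_X(ibK_{X/Y})$ is typically not locally free and the double-dual operation does not commute with restriction in general. However, because $V$ is very general, it avoids the locus where base change could fail, and the triviality of its normal bundle in $Y$ ensures the identification $K_{W/V} = K_{X/Y}|_W$ needed to invoke the canonical bundle formula on both sides and then separate the semistable and boundary contributions.
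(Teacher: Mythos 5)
Your overall skeleton (restrict the total divisor $L$, restrict the boundary, then subtract to get the moduli part) matches the logic of the paper, and your base-change route to $L_{W/V}\sim_{\mathbb{Q}}L_{X/Y}|_V$ is a workable alternative to the paper's argument -- though not for the reason you give: the locus where cohomology and base change fails can contain divisors of $Y$ dominating $T$, and every fiber $V$ meets such divisors in divisors of $V$, so a very general $V$ does \emph{not} avoid it. What saves the argument is the generic base change theorem applied over $T$: since $V=Y\times_T\{t\}$ and $W=X\times_T\{t\}$, for $t$ in a dense open subset of $T$ (intersected over the countably many $i$) the formation of $f_*\mathcal{O}_X(ibK_{X/Y})$ commutes with the base change $\{t\}\to T$, and the reflexive-hull discrepancy, being supported in codimension $\geq 2$ in $Y$, meets a general $V$ in codimension $\geq 2$. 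The paper instead gets the restriction of the total package from the identity $D_X|_W=D_W$ (with $D_X=bK_X-f^*(bK_Y+L_{X/Y})$, in Fujino's notation) for a very general fiber, and deduces $M_V\sim_{\mathbb{Q}}M_Y|_V$ at the end from $K_V+M_V+B_V\sim_{\mathbb{Q}}(K_Y+M_Y+B_Y)|_V$ once $B_V=B_Y|_V$ is known.

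The genuine gap is in your boundary argument. The locality statement in Proposition 3.2 says that $s_P$ depends only on $f|_{f^{-1}(U)}$ for a Zariski-open $U\subset Y$ containing $P$; it says nothing about base change along the closed immersion $V\cap U\hookrightarrow U$, which is what passing from $f$ to $f_W$ is. Preservation of the coefficient under such slicing is exactly the point that needs proof, and it is false for special slices (the relevant log canonical threshold can drop on a special subvariety); it holds only because $V$ is very general. Likewise, your second case asserts without justification that $f_W^*Q$ has canonical singularities (or that $f_W$ is semistable near $\eta_Q$) whenever $Q$ is not a component of $B_Y|_V$; this is a sufficient condition from Proposition 3.2(2), not a necessary one, there is no reason it holds, and proving it would be harder than proving $s_Q=0$ directly. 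The paper's mechanism is the one you are missing: the coefficient is characterized by $1-s_P=\operatorname{lct}$ of $f^*P$ with respect to $(X,-D_X/b)$ over the generic point of $P$ (and similarly for $f_W$ with $D_W$), one checks $D_X|_W=D_W$ for very general $W$, and then uses that log canonical thresholds are preserved when restricting to a very general fiber (a log resolution restricts to a log resolution by Bertini). With that in place both of your cases follow; without it, the step $B_V=B_Y|_V$ is unproved.
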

\begin{proof} Since $(Y,B_Y)$ is log smooth and $V$ is a very general fiber of $\varphi:Y\to
T$, we may assume that $B_Y|_V$ has simple normal crossings support.
Let $B_Y=\sum_P r_PP$ and $B_V=\sum_Q r'_QQ$. Recall that $1-r_P$ is
the log canonical threshold of $f^*P$ with respect to $(X,-D_X/b)$
over the generic point of $P$ and $1-r'_Q$ is the log canonical
threshold of $f^*_WQ$ with respect to $(W,-D_W/b)$ over the generic
point of $Q$, where $D_X=bK_X-f^*(bK_Y+L_{X/Y})$ and
$D_W=bK_W-f^*_W(bK_V+L_{W/V})$ (see \cite[Defintion 3.4]{fuj03}).
Since $W$ is a very general fiber, we have $D_X|_W=D_W$. Hence
$r'_Q=0$ when $Q$ is not contained in the support of $B_Y|_V$ and
$r'_Q=r_P$ when $Q$ is the restriction of some component $P$ of
$B_Y$. Therefore $B_V=B_Y|_V$. On the other hand, we have
$K_V+M_V+B_V\sim_{\mathbb{Q}}(K_Y+M_Y+B_Y)|_V$. Hence
$M_V\sim_{\mathbb{Q}}M_Y|_V$.
\end{proof}

\subsection*{Variation}Let $f:X\to Y$ be an algebraic fiber space. Let $K\supset
\mathbb{C}$ be an algebraically closed field contained in
$\overline{\mathbb{C}(Y)}$ such that there is a finitely generated
extension $L$ of $K$ such that
$Q(L\otimes_{K}\overline{\mathbb{C}(Y)})\cong
Q(\mathbb{C}(X)\otimes_{\mathbb{C}(Y)}\overline{\mathbb{C}(Y)})$
over $\overline{\mathbb{C}(Y)}$, where $Q$ denotes the fraction
field. The minimum of tr.deg$_{\mathbb{C}}K$ for all such $K$ is
called the $\mathbf{variation}$ of $f$ and denoted by Var$(f)$.
\begin{theorem}\label{tfae} Let $f: X\to Y$ be the Iitaka fibration as in
\cite[4.4]{fm00}. If the generic fiber $F$ of $f$ has a good minimal
model, then the following are equivalent:
\begin{enumerate}
\item $M_Y$ is numerically trivial.
\item $M_Y\sim_{\mathbb{Q}}0.$
\item $\kappa(Y,M_Y)=0.$
\item $\mbox{\rm{Var}}(f)=0.$
\end{enumerate}
\end{theorem}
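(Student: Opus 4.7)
The plan is to close the cycle of implications by proving $(2)\Rightarrow(1)$, $(2)\Rightarrow(3)$, $(1)\Rightarrow(4)$, $(3)\Rightarrow(4)$, and $(4)\Rightarrow(2)$. The two implications out of $(2)$ are immediate: $(2)\Rightarrow(1)$ holds since $\mathbb{Q}$-linear triviality implies numerical triviality, and $(2)\Rightarrow(3)$ because $mM_Y\sim 0$ for some $m>0$ yields $h^0(Y,kmM_Y)=1$ for every $k\geq 0$ and hence $\kappa(Y,M_Y)=0$.

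For $(4)\Rightarrow(2)$, I would exploit that $\mathrm{Var}(f)=0$ forces $f$ to be birationally isotrivial. Thus, shrinking to a Zariski open subset of $Y$, taking a finite \'etale cover there, and then compactifying, one produces a finite surjection $\pi:\widetilde{Y}\to Y$ together with a birational model $\widetilde{f}:\widetilde{X}\to\widetilde{Y}$ such that $\widetilde{f}$ is birational over $\widetilde{Y}$ to the trivial fibration $F\times\widetilde{Y}\to\widetilde{Y}$. For a trivial product, the double dual of $\widetilde{f}_{*}\mathcal{O}_{\widetilde{X}}(bK_{\widetilde{X}/\widetilde{Y}})$ is the trivial line bundle, so $L^{ss}_{\widetilde{X}/\widetilde{Y}}\sim_{\mathbb{Q}}0$ and hence $M_{\widetilde{Y}}\sim_{\mathbb{Q}}0$. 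By the base-change functoriality of the moduli part (see \cite{fm00}, \cite{fuj03}), $M_{\widetilde{Y}}$ and $\pi^{*}M_Y$ agree up to $\mathbb{Q}$-linear equivalence on a common birational model, so $\pi^{*}M_Y\sim_{\mathbb{Q}}0$; applying $\pi_{*}$ then yields $\deg(\pi)\cdot M_Y\sim_{\mathbb{Q}}0$, giving $M_Y\sim_{\mathbb{Q}}0$.

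For $(1)\Rightarrow(4)$ and $(3)\Rightarrow(4)$, I would invoke a Kawamata-type positivity theorem for moduli parts in the good-minimal-model setting, which asserts $\mathrm{Var}(f)\leq\nu(Y,M_Y)$ and $\mathrm{Var}(f)\leq\kappa(Y,M_Y)$. Hence $M_Y\equiv 0$ forces $\nu(Y,M_Y)=0$ and so $\mathrm{Var}(f)=0$; likewise $\kappa(Y,M_Y)=0$ forces $\mathrm{Var}(f)=0$. Together with $(4)\Rightarrow(2)\Rightarrow(1),(3)$ this completes the cycle.

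The main obstacle will be invoking the positivity theorem in the required generality: it is here that the good-minimal-model assumption on $F$ enters decisively, since without it the moduli part need not detect $\mathrm{Var}(f)$. A secondary technicality is making the comparison $\pi^{*}M_Y\sim_{\mathbb{Q}}M_{\widetilde{Y}}$ in $(4)\Rightarrow(2)$ rigorous, because the moduli part is only defined up to $\mathbb{Q}$-linear equivalence after passing to a suitable smooth birational model of the base, and one must choose such models compatibly on both sides of the finite cover $\pi$.
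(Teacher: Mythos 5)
Your cycle of implications does yield all four equivalences, but your route is genuinely different from the paper's, whose proof is purely citation-based: there $(1)\Leftrightarrow(2)$ is Ambro's theorem \cite[Theorem 3.5]{amb05}, $(2)\Rightarrow(3)$ is trivial, $(3)\Leftrightarrow(4)$ is Kawamata \cite[Theorem 1.1]{kaw85} via \cite[Remark 3.9]{fuj03} (this is exactly where the good-minimal-model hypothesis enters, as you predicted), and $(4)\Rightarrow(2)$ is \cite[Theorem 3.11]{fuj03}. Your treatment of $(4)\Rightarrow(2)$ --- birational isotriviality from $\mathrm{Var}(f)=0$, base change to a finite cover over which the family becomes birationally trivial, triviality of the moduli part for a product, pullback compatibility of the moduli b-divisor under generically finite base change (Ambro), and the norm/pushforward step --- is essentially a re-proof of Fujino's theorem and is fine in outline; the technicalities you flag (compatible birational models of the base on both sides of $\pi$) are real but standard, and note the cover need not be \'etale, generically finite suffices.

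The thin spot is $(1)\Rightarrow(4)$. The inequality $\mathrm{Var}(f)\leq\nu(Y,M_Y)$ is not an off-the-shelf theorem you can invoke; what the literature provides (and what the paper cites) is the $\kappa$-statement: under the good-minimal-model hypothesis, $\kappa(Y,M_Y)=0$ if and only if $\mathrm{Var}(f)=0$. To feed $(1)$ into that statement you must first pass from $M_Y\equiv 0$ to $\kappa(Y,M_Y)=0$, and numerical triviality alone only gives $\kappa(Y,M_Y)\in\{-\infty,0\}$; you need to know that $M_Y$ is $\mathbb{Q}$-linearly equivalent to an effective divisor, which is Ambro's result that the moduli b-divisor is b-nef and good (Theorem~\ref{ambro}). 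But once you have that, you may as well conclude $(1)\Rightarrow(2)$ directly: if $M_Y\sim_{\mathbb{Q}}E\geq 0$ and $M_Y\equiv 0$, then $E=0$ --- which is precisely what the paper's citation of \cite[Theorem 3.5]{amb05} accomplishes in one stroke, making the detour $(1)\Rightarrow(4)\Rightarrow(2)$ unnecessary. So either justify your $\nu$-inequality by combining Ambro's abundance of the moduli part with Kawamata's $\kappa$-statement, or replace that step by Ambro's theorem; as written, the phrase ``Kawamata-type positivity theorem asserting $\mathrm{Var}(f)\leq\nu(Y,M_Y)$'' conceals the one ingredient your argument actually needs.
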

\begin{proof}  (1)$\Longleftrightarrow$(2) is followed by
\cite[Theorem 3.5]{amb05}. The implication (2)$\Longrightarrow$(3)
is trivial. Since $F$ has a good minimal model, following
\cite[Theorem 1.1]{kaw85}, we have (3)$\Longleftrightarrow$(4) (cf.
\cite[Remark 3.9]{fuj03}). Finally, Fujino \cite[Theorem
3.11]{fuj03} proves the implication (4)$\Longrightarrow$(2).
\end{proof}

\section{Birational covering families of pure log canonical centers}
In this section, we construct a birational covering family of pure
log canonical centers.

 Recall that a subset $P$ of a variety $Y$ is called $\mathbf{countably}$ $\mathbf{dense}$ if it is
not contained in the union of countably many closed subsets of $Y$.
\begin{lemma}\label{family}
Let $(Y,\Delta)$ be a log pair, where $Y$ is projective and let $D$
be a big $\mathbb{Q}$-Cartier divisor on $Y$. Suppose that for every
point $y\in P$, where $P$ is a countably dense subset of \;$Y$, we
can find a pair $(\Delta_y,W_y)$ such that $W_y$ is a pure log
canonical center for $K_Y+\Delta+\Delta_y$ at $y$ and
$\Delta_y\sim_{\mathbb{Q}}D/w_y$ for some positive rational number
$w_y$. Then there exists a diagram
$$\xymatrix{Y' \ar[r]^{\pi}  \ar[d]^{\varphi} & Y\\
T}$$ such that $\varphi$ is a dominant morphism of normal projective
varieties with connected fibers and for a general fiber $V_t$ of
$\varphi$ there exists $y\in \varphi(V_t)$ so that $\varphi(V_t)$ is
a pure log canonical center for $K_Y+\Delta+\Delta_t$ with
$\Delta_t\sim_{\mathbb{Q}}D/w$ at $y$, for some weight $w$. Also
$\pi$ is a generically finite and dominant morphism of normal
varieties.
\end{lemma}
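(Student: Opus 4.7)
The plan is to organize the uncountable family of data $\{(W_y,\Delta_y)\}_{y\in P}$ into a single algebraic family, combining a Hilbert scheme construction with the pigeonhole principle that a countable union of non-countably-dense subsets of $Y$ is again not countably dense. Consequently, splitting $P$ into countably many pieces forces at least one piece to remain countably dense, which is the source of all the uniformity we need.

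First I would stratify $P$ to force all relevant divisors into one fixed linear system. For each $y$ choose a positive integer $N_y$ with $N_y/w_y\in\mathbb{Z}_{>0}$ and $N_y\Delta_y$ an integral Weil divisor, so $N_y\Delta_y\in|(N_y/w_y)D|$. The pair $(w_y,N_y)$ lies in a countable set, so some $(w,N)$ occurs on a countably dense sub-subset of $P$; after replacing $P$ by this subset and setting $L=(N/w)D$, every $N\Delta_y$ lives in the fixed finite-dimensional linear system $|L|$. A second pigeonhole application, applied to the countably many irreducible components of $\mathrm{Hilb}(Y)$, produces a single component $H$ hit by a countably dense set of $[W_y]$'s. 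Let $T_0$ be an irreducible component of the Zariski closure in $H\times|L|$ of the points $\{([W_y],N\Delta_y)\}_y$, chosen so that a countably dense set of $y$'s still corresponds to points of $T_0$. Pulling back the universal subscheme over $H$ yields $U_{T_0}\subset Y\times T_0$ with projections $\pi_0:U_{T_0}\to Y$ and $\varphi_0:U_{T_0}\to T_0$, and since $y\in W_y$ for the chosen $y$'s the map $\pi_0$ is dominant.

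The key technical step is to promote the pure log canonical center property from the chosen countably dense subset of $T_0$ to a Zariski open dense subset. I would argue that the condition
\[
\text{``$W_s$ is a pure log canonical center of $\bigl(Y,\Delta+E_s/N\bigr)$''}
\]
is constructible in $(s,E_s)\in T_0$, by producing a simultaneous log resolution over a finite stratification of $T_0$ and tracking the loci where some discrepancy equals $-1$ while all others are $\geq-1$. Constructibility together with validity on a countably dense subset then forces the condition on a Zariski open dense subset $T_0^{\circ}\subset T_0$.

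To finish, if the general $W_y$ has dimension $d$ and $\dim T_0>\dim Y-d$, I intersect $T_0$ with general very ample hypersurfaces meeting $T_0^{\circ}$ to obtain an irreducible $T_1\subset T_0$ of dimension $\dim Y-d$ over which the evaluation is generically finite and dominant. Normalizing $U_{T_1}$ and taking the Stein factorization produces $\varphi:Y'\to T$ with connected fibers between normal projective varieties, and $\pi:Y'\to Y$ is the composition. The main obstacle I anticipate is the constructibility step: rigorously establishing that being a \emph{pure} log canonical center of the parametrized pair varies constructibly in $(W,E)\in T_0$; once this is settled, the remaining Hilbert scheme bookkeeping, cutting down to generic finiteness, and Stein factorization are standard.
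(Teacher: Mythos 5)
The paper offers no argument of its own for this lemma---it simply cites \cite[Lemma 3.2]{mck02} and \cite[Lemma 3.2]{tod08}---and your proposal is essentially a reconstruction of that standard proof: parametrize the data by countably many Hilbert-scheme components and linear systems, pigeonhole the countably dense set onto one irreducible family, spread the pure log canonical center condition from a dense set of parameters to a dense open subset via simultaneous log resolutions over a stratification, cut the base down to make the evaluation map generically finite, and finish with normalization and Stein factorization. The constructibility step you flag is indeed the crux and is handled exactly this way in the cited sources, so your route is correct and essentially the same as the one the paper relies on.
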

\begin{proof}
See \cite[Lemma 3.2]{mck02} or \cite[Lemma 3.2]{tod08}.
\end{proof}

\begin{lemma}[M$^{\rm{c}}$Kernan]\label{mck}
Let $(Y,\Delta)$ be a log pair, where $Y$ is a normal projective
variety of dimension $n'$. Let $D$ be a nef and big
 \;$\mathbb{Q}$-Cartier divisor. Let $(\Delta_t,V_t)$ be a covering
family of weight less than $w$ and dimension $k$.

If $(\Delta_t, V_t)$ is not birational then we may find a covering
family of $(\Gamma_s, W_s)$ of weight $w/(n'-k)$ and dimension $l$,
where either
\begin{enumerate}
\item $l>k$, or
\item $l<k$ and $(\Gamma_s,W_s)$ is a birational family.
\end{enumerate}
\end{lemma}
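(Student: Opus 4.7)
The plan is to apply the standard tie-breaking and dimension-cutting machinery of \cite{mck02}. First, using Lemma \ref{family}, I would encode $(\Delta_t, V_t)$ as a family diagram $\varphi: Y' \to T$, $\pi: Y' \to Y$ with $Y'$ smooth, so that a general fiber $V'_t$ maps generically finitely onto $V_t$ and $V_t$ is a pure log canonical center for $(Y, \Delta + \Delta_t)$ at every very general $y \in V_t$. After a further log resolution we may assume all relevant divisors are log smooth and $\Delta_t \sim_{\mathbb{Q}} D/w_t$ with $w_t < w$.

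Next, at such a very general $y$, I would construct an effective auxiliary $\mathbb{Q}$-divisor $\Gamma_y \sim_{\mathbb{Q}} D/w'$ with $w' = w/(n'-k)$ so that $(Y, \Delta + \Delta_t + \Gamma_y)$ has a new minimal log canonical center $W_y \neq V_t$ passing through $y$. The mechanism is familiar: since $V_t$ covers $Y$ in a family and $D$ is big and nef, the restriction $D|_{V_t}$ is big and nef on $V_t$; picking a second very general $y' \in V_t$ and applying the multiplicity criterion (\cite[Proposition~9.3.2]{laz2}) together with Nadel vanishing produces, after a tie-breaking perturbation, an effective divisor on $V_t$ whose minimal non-klt center at $y$ is strictly smaller than $V_t$. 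The non-birationality hypothesis on $(\Delta_t, V_t)$ is precisely what guarantees that the sections of an appropriate multiple of $D|_{V_t}$ fail to separate the pair $y, y'$ with a controlled coefficient, and this is what supplies the needed high multiplicity at $y$. Letting $y$ range over a countably dense subset and invoking Lemma \ref{family} once more on the resulting data packages these centers into the required covering family $(\Gamma_s, W_s)$.

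I would then split the conclusion by $l = \dim W_y$. If $l < k$, then through a very general point of $Y$ there are only finitely many minimal centers of the new family, and hence the resulting family is birational in the sense of the lemma, giving alternative (2). Otherwise the tie-breaking construction must force the minimal log canonical center at $y$ to jump above dimension $k$, giving alternative (1).

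The principal obstacle is the quantitative weight bookkeeping, namely proving that $w' = w/(n'-k)$ is achievable. The factor $n'-k$ reflects the codimension of $V_t$ in $Y$: to promote a divisor of class a multiple of $D|_{V_t}$ on the $k$-dimensional $V_t$ to an ambient $\mathbb{Q}$-divisor on $Y$ via the incidence correspondence $Y'$, one must pay for the codimension by absorbing sections of $D$ that vanish along $V_t$, and the linear bookkeeping introduces exactly a factor of $(n'-k)$. A secondary subtlety is verifying that the constructed $(\Gamma_s, W_s)$ is genuinely a covering family of pure log canonical centers; this follows from the fact that both $V_t$ and the auxiliary point $y' \in V_t$ vary in countably dense subsets, together with the generic finiteness of $\pi$ on the new incidence variety.
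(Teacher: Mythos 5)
The paper does not actually prove this lemma: its ``proof'' is the citation to \cite[Lemma 4.2]{mck02}, so the only question is whether your sketch would stand on its own. It does not, because the central construction is the wrong mechanism for this statement. You propose to produce the new centers by taking a very general $y\in V_t$, restricting $D$ to $V_t$, and using the multiplicity criterion plus Nadel vanishing (after tie-breaking) to cut $V_t$ down. But to impose multiplicity $>\dim V_t$ at a point with a divisor $\mathbb{Q}$-linearly equivalent to $\tfrac{n'-k}{w}\,D|_{V_t}$ you need a lower bound on $\mbox{\rm{vol}}(V_t,D|_{V_t})$ in terms of $w$ and $n'-k$; ``nef and big'' gives no such bound, and the target weight $w/(n'-k)$ is a purely combinatorial quantity independent of any volume. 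Your claim that non-birationality of the family ``guarantees that sections of a multiple of $D|_{V_t}$ fail to separate $y,y'$ with a controlled coefficient'' is not an argument: non-birationality means the incidence map $Y'\to Y$ is not generically finite, i.e.\ infinitely many members of the family pass through a very general point of $Y$; it says nothing about linear series of $D|_{V_t}$. In effect you have conflated Lemma \ref{mck} (= \cite[Lemma 4.2]{mck02}) with Lemma \ref{mckernan} (= \cite[Lemma 5.3]{mck02}): the latter is the statement where one builds an auxiliary family on $V_t$ with respect to $A_t$, and there the weight $w'$ on the fiber is an \emph{input}, obtained in this paper only via subadjunction and induction (Sections 5--6), not from bigness alone.

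The actual proof of Lemma \ref{mck} uses the non-birationality hypothesis directly: through a very general point there pass infinitely many members $V_{t_i}$, and the new divisor is assembled from at most $n'-k$ of the already given $\Delta_{t_i}$ (each of weight close to $w$), which is exactly where the factor $n'-k$ comes from with no volume input; the dichotomy $l>k$ versus $l<k$ comes from whether the members through a general point sweep out something of larger dimension or force an intersection of centers of smaller dimension. Your treatment of the dichotomy is also only an assertion: that $l<k$ implies the new family is birational (``only finitely many minimal centers through a very general point'') is precisely one of the things that must be proved, not assumed. Finally, you yourself flag the weight bookkeeping for $w/(n'-k)$ as ``the principal obstacle'' and offer only a heuristic about absorbing sections vanishing along $V_t$; since that is the quantitative content of the lemma, the proposal as written has a genuine gap. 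If you want a complete argument, reproduce the sum-of-members construction from \cite[Lemma 4.2]{mck02} (or the account in \cite{tod08}), rather than the Nadel-vanishing production of divisors, which belongs to a different step of the paper.
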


\begin{remark}
Lemma \ref{mck} still holds if we only assume that $D$ is big
instead of nef and big.
\end{remark}
\begin{proof}
See \cite[Lemma 4.2]{mck02}.
\end{proof}

\begin{corollary}\label{birational family} Let $(Y,\Delta)$ be a log pair, where $Y$ is a normal projective
variety of dimension $n'$. Let $D$ be a big $\mathbb{Q}$-Cartier
divisor. Let $(\Delta_t,V_t)$ be a covering family of weight $w$ and
dimension $k$. Then there exists a birational covering family of
$(\Gamma_s,W_s)$ of weight $w'\geq w/(n'-1)!$.

\end{corollary}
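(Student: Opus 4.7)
I would iterate Lemma \ref{mck} (in the form permitted by the Remark, where $D$ is only assumed big) until a birational covering family is produced, and then track how the weight has decayed along the way. Starting from $(\Delta_t,V_t)$ with dimension $k_0:=k$ and weight $w_0:=w$, if this family is already birational there is nothing to prove. Otherwise Lemma \ref{mck} yields a covering family of dimension $k_1$ and weight $w_1=w_0/(n'-k_0)$, where either $k_1>k_0$ (continue) or $k_1<k_0$ and the new family is birational (stop).

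The non-terminating steps produce a strictly increasing sequence of dimensions bounded above by $n'-1$, because the members of a nontrivial covering family are proper subvarieties of $Y$. Hence the procedure halts after at most $n'-k$ applications of Lemma \ref{mck}. The slowest decay of the weight happens when the iteration visits every dimension from $k$ up to $n'-1$ before becoming birational; each application at dimension $d$ divides the weight by $n'-d$, so in that worst case the final weight satisfies
\[
w' \;\geq\; \frac{w}{\prod_{d=k}^{n'-1}(n'-d)} \;=\; \frac{w}{(n'-k)!}.
\]
Since $(n'-k)!\leq (n'-1)!$ whenever $k\geq 1$, this yields $w'\geq w/(n'-1)!$; the case $k=0$ is immediate because a zero-dimensional covering family is already birational.

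The only thing needing care is verifying that no other pattern of dimension jumps produces a worse weight bound, which is clear: skipping a dimension $d$ only removes a factor $n'-d\geq 1$ from the denominator, so the worst case really is the one described above. Consequently the proof reduces to a careful bookkeeping on top of Lemma \ref{mck}, and I do not foresee any substantive obstacle.
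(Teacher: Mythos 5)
Your proposal is correct and is exactly the argument the paper intends: the paper's proof is just the one-line "immediate from Lemma \ref{mck}," and your iteration with the bookkeeping that the dimensions strictly increase, are bounded by $n'-1$, and contribute factors $n'-d$ whose worst-case product is $(n'-k)!\leq(n'-1)!$ is the content being left implicit. Your treatment of the $k=0$ case (a zero-dimensional covering family is already birational, since the parameter space of point-centers embeds in $Y$) is a reasonable way to close the only edge case the paper glosses over.
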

\begin{proof} This is immediate from Lemma \ref{mck}.
\end{proof}

By Lemma \ref{vz}, $K_Y+M_Y+B_Y$ is a big $\mathbb{Q}$-divisor on
$Y$, where $Y$ is a smooth projective variety of dimension $n'$, so
for each point $y\in Y$, we can find a pair $(D_y,V_y)$ such that
\begin{enumerate}
\item $D_y\sim_{\mathbb{Q}}\lambda(K_Y+M_Y+B_Y)$, for some rational
number $\lambda>0$,
\item $V_y$ is a pure log canonical center of $(Y,D_y)$ at $y$.
\end {enumerate}
Note that we can take the same $\lambda$ for every point in a
countably dense subset of $Y$ with dim$(V_y)=k$.
 Then by the previous corollary we obtain
a diagram
$$\xymatrix{Y' \ar[r]^{\pi} \ar[d]^{\varphi} & Y\\
T}
$$
such that
\begin{enumerate}
\item $\pi$ is birational and $\varphi$ is dominant.
\item Let $V_t=\pi(V'_t)$, where $V'_t$ is a general fiber of $\varphi$. Then there exists a $\mathbb{Q}$-divisor
$D_t\sim_{\mathbb{Q}}\lambda'(K_Y+M_Y+B_Y)$ on $Y$ such that $V_t$
is a pure log canonical center of $(Y,D_t)$ and $\lambda'\leq
\lambda (n'-1)!$.
\end{enumerate}

\begin{proposition}\label{fiber}
Let $f:X\rightarrow Y$ be the Iitaka fibration satisfying the
hypotheses of Theorem \ref{main thm}. Suppose that for
 any $y$ in a countably dense subset of \;$Y$, there is an effective
$\mathbb{Q}$-divisor $D_y\sim_{\mathbb{Q}}\lambda(K_Y+M_Y+B_Y)$ such
that $y\in \mbox{\rm{Non-klt}}(Y,D_y)$. Then there exists a diagram

$$\xymatrix{X' \ar[d]_{f'} \ar[r] & X \ar[d]^f\\
Y' \ar[d]_{\varphi} \ar[r]^{\pi} & Y\\
T }
$$

such that
\begin{enumerate}
\item $X'$ and $Y'$ are smooth projective varieties.
\item $\pi$ is birational, $\varphi$ is dominant with \mbox{\rm{dim}}$T\geq 0$ and $f'$ satisfies
the hypotheses of Theorem \ref{main thm}.
\item For any very general fiber $V'_t$ of $\varphi$, there exists
an effecitive $\mathbb{Q}$-divisor
$D'_t\sim_{\mathbb{Q}}\lambda'(K_{Y'}+M_{Y'}+B_{Y'})$ on $Y'$ such
that $V'_t$ is a pure log canonical center of $(Y',D'_t)$ and
$\lambda'\leq \lambda (n'-1)!$, where $n'=\mbox{\rm{dim}}Y$.
\end{enumerate}
\end{proposition}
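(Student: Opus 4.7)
The plan is to combine Corollary~\ref{birational family} with a standard resolution of the fiber product $X\times_Y Y'$, and then transfer the Canonical Bundle Formula from $f$ to $f'$ under the birational morphism $\pi$.

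First, using the bigness of $K_Y+M_Y+B_Y$ (Lemma~\ref{vz}(3)), I would apply standard tie-breaking to each $D_y$: writing $K_Y+M_Y+B_Y\sim_{\mathbb{Q}}A+E$ with $A$ ample and $E$ effective, a small general perturbation of $D_y$ by a multiple of $A$ produces $\tilde D_y\sim_{\mathbb{Q}}(\lambda+\varepsilon)(K_Y+M_Y+B_Y)$ through which the minimal non-klt center $V_y$ at $y$ is a pure log canonical center. By countable decomposition of $Y$ we may assume $\mbox{\rm{dim}}V_y=k$ is constant on a countably dense subset, and we relabel $\lambda+\varepsilon$ as $\lambda$. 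Applying Lemma~\ref{family} with $\Delta=0$, $D=K_Y+M_Y+B_Y$, $w_y=1/\lambda$, and then Corollary~\ref{birational family} to upgrade the resulting family, we obtain a diagram with $\pi:Y'\to Y$ birational, $\varphi:Y'\to T$ dominant, and for each very general $t$ an effective $\mathbb{Q}$-divisor $D_t\sim_{\mathbb{Q}}\lambda''(K_Y+M_Y+B_Y)$ on $Y$ making $V_t=\pi(V'_t)$ a pure log canonical center of $(Y,D_t)$, with $\lambda''\leq\lambda(n'-1)!$.

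Next, replacing $Y'$ by a log resolution compatible with the family $V'_t$, and taking $X'$ to be a nonsingular model of the main component of $X\times_Y Y'$, we may assume $X'$ and $Y'$ are smooth projective and $f':X'\to Y'$ is a morphism. Since $\pi$ is birational, the very general fiber of $f'$ is birational to $F$, so the invariants $b$ and $\mbox{\rm{Betti}}_{\mbox{\rm{dim}}E'}(E')$ are unchanged (hypothesis~(3) preserved); existence of a good minimal model of the generic fiber (hypothesis~(2)) is a birational invariant of smooth projective varieties of Kodaira dimension zero; and hypothesis~(1), $\mbox{\rm{Var}}(f')=0$, is preserved under birational base change.

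Finally, to produce the divisor $D'_t$ on $Y'$, I would invoke the birational invariance of the moduli $\mathbb{Q}$-b-divisor (Ambro~\cite{amb05}, Fujino~\cite{fuj03}): combined with the log pullback formula for $K_Y+B_Y$ along $\pi$, this gives
\[
\pi^*(K_Y+M_Y+B_Y)\sim_{\mathbb{Q}}K_{Y'}+M_{Y'}+B_{Y'}+G
\]
for some $\pi$-exceptional $\mathbb{Q}$-divisor $G$. Setting $V'_t$ to be the strict transform of $V_t$ and defining $D'_t$ as an effective modification of $\pi^*D_t-\lambda''G$ obtained by adding a suitable $\pi$-exceptional divisor (supported away from $V'_t$) to absorb any negative coefficients, we arrive at $D'_t\sim_{\mathbb{Q}}\lambda'(K_{Y'}+M_{Y'}+B_{Y'})$ with $V'_t$ a pure log canonical center and $\lambda'\leq\lambda(n'-1)!$. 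The main obstacle is this Canonical Bundle Formula compatibility under the birational $\pi$, together with the exceptional bookkeeping needed to keep $D'_t$ both effective and log canonical at $V'_t$; the remaining steps are routine combinations of the tools developed in Sections~3 and~4.
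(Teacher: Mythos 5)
Your construction of the covering family via Lemma \ref{family} and Corollary \ref{birational family}, and your choice of $X'$ as a resolution of the main component of $X\times_Y Y'$ with the observation that the generic fiber (hence hypotheses (1)--(3) of Theorem \ref{main thm}) is unchanged, coincide with the paper's argument for parts (1) and (2). The gap is in your production of $D'_t$ for part (3). You write $\pi^*(K_Y+M_Y+B_Y)\sim_{\mathbb{Q}}K_{Y'}+M_{Y'}+B_{Y'}+G$ with $G$ $\pi$-exceptional and then propose to take $D'_t$ to be $\pi^*D_t-\lambda''G$ "modified by adding a suitable $\pi$-exceptional divisor to absorb any negative coefficients." That modification is not available: if $E_0\neq 0$ is effective and $\pi$-exceptional, then $\pi^*D_t-\lambda''G+E_0\sim_{\mathbb{Q}}\lambda''(K_{Y'}+M_{Y'}+B_{Y'})+E_0$, and $E_0$ cannot be $\mathbb{Q}$-linearly equivalent to any multiple $c(K_{Y'}+M_{Y'}+B_{Y'})$ (pushing forward to $Y$ forces $c=0$, and a nonzero effective divisor is never $\mathbb{Q}$-linearly trivial on a projective variety). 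So you can restore effectivity or stay in the ray spanned by $K_{Y'}+M_{Y'}+B_{Y'}$, but not both; and negative coefficients genuinely occur, namely on exceptional divisors of negative (generalized) discrepancy over $(Y,B_Y+M_Y)$, so the problem cannot be waved away. Since the proposition (and its later use in Proposition \ref{isolated}) requires $D'_t$ effective and exactly $\mathbb{Q}$-linearly equivalent to $\lambda'(K_{Y'}+M_{Y'}+B_{Y'})$, this step fails as written.

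The paper sidesteps the exceptional bookkeeping entirely: since $\pi$ is birational, for sufficiently divisible $m$ one has $H^0(Y',\mathcal{O}_{Y'}(m(K_{Y'}+M_{Y'}+B_{Y'})))\cong H^0(X,\mathcal{O}_X(mK_X))\cong H^0(Y,\mathcal{O}_Y(m(K_Y+M_Y+B_Y)))$, so the effective divisor $D_t\sim_{\mathbb{Q}}\lambda'(K_Y+M_Y+B_Y)$ corresponds directly to an effective $D'_t\sim_{\mathbb{Q}}\lambda'(K_{Y'}+M_{Y'}+B_{Y'})$ with $\pi(D'_t)=D_t$; then, because $V'_t$ is a very general fiber and hence not contained in the exceptional locus, $(Y',D'_t,V'_t)$ and $(Y,D_t,V_t)$ agree at the generic point of $V'_t$, which is all that is needed for the pure log canonical center condition (your own localization argument at the generic point is fine and is essentially this last step). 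Replacing your exceptional-correction step by this transport through the canonical bundle formula repairs the proof. A minor additional slip: your tie-breaking perturbs $D_y$ by $\varepsilon A$, which increases $\lambda$ and only yields $\lambda'\leq(\lambda+\varepsilon)(n'-1)!$; to keep the stated bound one should instead scale $D_y$ down by the log canonical threshold (multiplication by a rational number $\leq 1$), as the paper does.
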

\begin{proof}
By our discussions above, there exists a covering family
$Y'\overset{\varphi}{\rightarrow} T$ such that $Y'
\overset{\pi}{\rightarrow} Y$ is birational. Now replace $Y'$ by a
smooth model and let $X'$ be the resolution of the main component of
$X\times_{Y}Y'$. It is easy to see that $f'$ and $f$ have the same
generic fiber. Hence,
 (1) and (2) are satisfied. We only need to show (3).

Let $V_t=\pi(V'_t)$. By our assumptions and previous discussions,
there is an effective $\mathbb{Q}$-divisor
$D_t\sim_{\mathbb{Q}}\lambda'(K_Y+M_Y+B_Y)$ on $Y$ such that $V_t$
is a pure log canonical center of $(Y,D_t)$ and $\lambda' \leq
\lambda(n'-1)!$. Since $\pi$ is birational, for all $
m\in\mathbb{Z}_{>0}$ sufficiently divisible, we have
\begin{eqnarray*}
H^0(Y',\mathcal{O}_{Y'}(m(K_{Y'}+M_{Y'}+B_{Y'})))&\cong &
H^0(X',\mathcal{O}_{X'}(mK_{X'}))\\
&\cong& H^0(X,\mathcal{O}_X(mK_X))\\
&\cong& H^0(Y,\mathcal{O}_Y(m(K_Y+M_Y+B_Y))).
\end{eqnarray*}

So there is an effective $\mathbb{Q}$-divisor
$D'_t\sim_{\mathbb{Q}}\lambda'(K_{Y'}+M_{Y'}+B_{Y'})$ on $Y'$ such
that $\pi(D'_t)=D_t$. Since $V'_t$ is a very general fiber of
$\varphi$, $(Y',D'_t,V'_t)$ and $(Y,D_t,V_t)$ are isomorphic at the
generic point of $V'_t$. Therefore, $V'_t$ is a pure log canonical
center of $(Y',D'_t)$.
\end{proof}

\begin{lemma}[\mbox{\cite[Lemma 5.3]{mck02}}]\label{mckernan}
Let $(Y,\Delta)$ be a log pair and let $D$ be a $\mathbb{Q}$-divisor
of the form $A+E$ where $A$ is ample and $E$ is effective. Let
$(\Delta_t,V_t)$ be a covering family of weight greater than $w$ and
dimension $k$. Let $A_t$ be the restriction of $A$ to $V_t$. Suppose
that for all very general points $t\in U$ we may find a covering
family of $(\Gamma_{t,s},W_{t,s})$ on $V_t$ of weight, with respect
to $A_t$, greater than $w'$.

Then we may find a covering family of $(\Gamma_{s},W_s)$ of
dimension less than $k$ and weight
$$
w''=\frac{ww'}{w+w'}.$$ Further if both $(\Delta_t,V_t)$ and
$(\Gamma_{t,s},W_{t,s})$ are birational families then so is
$(\Gamma_s,W_s)$.
\end{lemma}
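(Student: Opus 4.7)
The plan is to combine the two covering families via a tie-breaking argument which lifts the inner family from $V_t$ to $Y$ and then cuts down the dimension of the pure log canonical center $V_t$. I choose representatives $\Delta_t\sim_{\mathbb{Q}}D/w_t$ with $w_t>w$ making $V_t$ a pure log canonical center of $(Y,\Delta+\Delta_t)$, and $\Gamma_{t,s}\sim_{\mathbb{Q}}A_t/w'_{t,s}$ with $w'_{t,s}>w'$ making $W_{t,s}$ a pure log canonical center on $V_t$ with respect to the boundary produced on $V_t$ by subadjunction.

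The first step is to lift $\Gamma_{t,s}$ to an effective divisor $\tilde{\Gamma}_{t,s}$ on $Y$. Since $A$ is ample, Serre vanishing gives that, for $m\gg 0$, the restriction map $H^0(Y,\mathcal{O}_Y(mA))\to H^0(V_t,\mathcal{O}_{V_t}(mA_t))$ is surjective. Choosing $m$ divisible enough that $m\Gamma_{t,s}$ is an integral divisor on $V_t$, I lift a section cutting out $m\Gamma_{t,s}$ to a global section of $\mathcal{O}_Y(mA)$, producing an effective $\mathbb{Q}$-divisor $\tilde{\Gamma}_{t,s}\sim_{\mathbb{Q}}A/w'_{t,s}$ with $\tilde{\Gamma}_{t,s}|_{V_t}\geq\Gamma_{t,s}$. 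Writing $A=D-E$ with $E\geq 0$, the divisor $\tilde{\Gamma}_{t,s}+(1/w'_{t,s})E$ is effective and $\mathbb{Q}$-linearly equivalent to $D/w'_{t,s}$, so the contribution we just added is controlled by the class of $D$.

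Next I perform tie-breaking on the pair $(Y,\,\Delta+a\Delta_t+\tilde{\Gamma}_{t,s}+(1/w'_{t,s})E)$ for $a\in[0,1]$. At $a=1$, inversion of adjunction at the generic point of $V_t$ shows that $W_{t,s}$ lies in the non-klt locus and that $V_t$ itself is still a non-klt center. Decreasing $a$ to the largest value $a_0\le 1$ at which the pair fails to be klt at the generic point of some proper subvariety $W_s\subsetneq V_t$, and then applying the standard tie-breaking perturbation by a small multiple of a general ample divisor through $W_s$ (as in \cite{mck02}), I obtain a pure log canonical center $W_s$ of dimension strictly less than $k$ and meeting $W_{t,s}$. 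The total coefficient of $D$ used is at most $a_0/w_t+1/w'_{t,s}\le 1/w+1/w'=1/w''$, giving the required weight $w''=ww'/(w+w')$. Applying Lemma~\ref{family} to the resulting family of pairs parametrised by $(t,s)$ yields the desired covering family $(\Gamma_s,W_s)$.

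For the birationality statement, if both input families are birational then a very general point $y\in Y$ lies on a unique $V_t$ and, inside that $V_t$, on a unique $W_{t,s}$; since $W_s$ is determined by these, the output family is birational as well. The main obstacle will be the extension step combined with uniform control of the tie-breaking constant $a_0$ across the family: one must verify that the lifted divisor $\tilde{\Gamma}_{t,s}$ does not produce spurious non-klt centers dominating $T$ and that $a_0$ is generically constant in $t$, which follows from semicontinuity of log canonical thresholds in families. The ampleness of $A$ (rather than mere bigness) is used essentially in the extension via Serre vanishing, and this is the one place where the hypothesis that $D=A+E$ with $A$ ample cannot be relaxed.
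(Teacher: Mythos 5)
The paper does not actually prove this lemma --- it is quoted verbatim from \cite[Lemma 5.3]{mck02} with no proof given --- so your argument has to stand on its own, and compared with McKernan's actual proof it has the right overall shape (restrict to the center, use the inner family, lift back using the ampleness of $A$, tie-break, and add the coefficients to get $1/w''=1/w+1/w'$). The weight arithmetic and the birationality discussion are fine.

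The genuine gap is the central transfer step. You lift the divisor $\Gamma_{t,s}$ itself to $\tilde{\Gamma}_{t,s}\sim_{\mathbb{Q}}A/w'_{t,s}$ via Serre vanishing and then assert that ``inversion of adjunction at the generic point of $V_t$'' makes $W_{t,s}$ a non-klt center of $(Y,\Delta+\Delta_t+\tilde{\Gamma}_{t,s}+\cdots)$. For a center $V_t$ of codimension $\geq 2$ there is no such naive inversion of adjunction: knowing $\tilde{\Gamma}_{t,s}|_{V_t}=\Gamma_{t,s}$ is non-klt along $W_{t,s}$ as a pair on $V_t$ does not by itself force the ambient pair to be non-klt there. (Already for $V$ a line in $\mathbb{P}^2$ and $x\in V$, the divisor $2x$ on $V$ lifts to a smooth conic tangent to $V$ at $x$, whose pair on $\mathbb{P}^2$ with small coefficient is klt at $x$; the restriction theorem for multiplier ideals goes in the opposite direction.) What makes the cutting-down work is not the lift of the divisor but the lift of the \emph{singularity}: one must first use tie-breaking to make $V_t$ an exceptional (minimal) pure lc center at the very general point, invoke Kawamata subadjunction (Proposition \ref{sub}, Theorem \ref{ambro}, \cite{kaw98}) to write $(K_Y+\Delta+\Delta_t)|_{V_t^{\nu}}$ as $K_{V_t^{\nu}}$ plus a controlled boundary, and then apply an extension theorem for non-klt centers of the type \cite[Theorem 5.4]{hm06} (this is also the content of McKernan's own auxiliary lemmas in Section 5 of \cite{mck02}), which produces, for every $\epsilon>0$, a divisor on $Y$ of class $(1/w'_{t,s}+\epsilon)A$ whose non-klt locus near $y$ is contained in $V_t$ and meets it in a center inside $W_{t,s}$; the $\epsilon$-loss is exactly why the hypotheses are stated with strict inequalities on the weights. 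Your appeal to Nadel vanishing and semicontinuity of log canonical thresholds does not substitute for this step, so as written the proof does not establish that the new non-klt center of dimension $<k$ exists.
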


\section{Subadjunction}

In his fundamental paper \cite{kaw98}, Kawamata proves a remarkable
subadjunction theorem. An immediate consequence of this theorem is
that if $(X,D)$ is a log canonical pair, $V$ is a non-klt center of
$(X,D)$, then we have $(K_X+D)|_V \sim _{\mathbb{Q}} K_V+\Delta_V$,
where $\Delta_V$ is a pseudoeffective divisor on $V$. Actually, one
can prove a more precise result.

\begin{proposition}[Subadjunction]\label{sub}
Let $X$ be a normal variety and $D$ an effective
$\mathbb{Q}$-divisor on $X$ such that $(X,D)$ is a log pair. If
\:$V$ is a pure log canonical center of $(X,D)$ and $\nu :
V^{\nu}\rightarrow V $ is the normalization, then we have
$$(K_X+D)|_{V^{\nu}}\sim _\mathbb{Q}K_{V^{\nu}}+\Delta_{V^{\nu}},$$
where $\Delta_{V^{\nu}}$ is an effective $\mathbb{Q}$-divisor.
\end{proposition}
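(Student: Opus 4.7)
The plan is to pass to a dlt modification of $(X, D)$, find a stratum mapping birationally to the normalized log canonical center $V^\nu$, and then apply iterated divisorial adjunction along the chain of strata, at each step exploiting the effectivity of the different in the dlt setting.

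By \cite{bchm06} there exists a crepant dlt modification $\mu: X' \to X$, i.e.\ a projective birational morphism with $K_{X'} + D' = \mu^*(K_X + D)$ such that $(X', D')$ is dlt, where $D' = \mu^{-1}_* D + \sum E_i$ is the strict transform together with all $\mu$-exceptional prime divisors of discrepancy $-1$. Since $V$ is pure, $(X, D)$ is lc at the generic point of $V$, which forces $V$ to be a minimal lc center at its generic point: any lc center strictly contained in $V$ cannot contain the generic point $\eta_V$, since otherwise its closure would equal $V$. Under the crepant correspondence of lc centers, one can then produce a stratum $W$ of the dlt pair $(X', D')$ --- that is, a component of an iterated intersection of coefficient-$1$ components of $D'$ --- such that $\mu(W) = V$ and the composition $W \to V \to V^\nu$ is birational.

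Next, divisorial adjunction applied iteratively along a chain $W = S_r \subsetneq S_{r-1} \subsetneq \cdots \subsetneq S_0 = X'$, where each $S_{i+1}$ is a coefficient-$1$ component of the boundary on $S_i$, yields at each step
$$(K_{S_i} + D_{S_i})|_{S_{i+1}} = K_{S_{i+1}} + D_{S_{i+1}}$$
with $D_{S_{i+1}}$ effective, because the different along a coefficient-$1$ component of a dlt pair is effective by standard dlt adjunction. Iterating $r$ times produces
$$(K_{X'} + D')|_W \sim_\mathbb{Q} K_W + D_W$$
with $D_W$ effective. Combining with the crepant relation $K_{X'} + D' = \mu^*(K_X + D)$ and restricting along the birational morphism $W \to V^\nu$ gives $(K_X + D)|_{V^\nu} \sim_\mathbb{Q} K_{V^\nu} + \Delta_{V^\nu}$, with $\Delta_{V^\nu}$ the push-forward of $D_W$, which remains effective.

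The main obstacle is the first step: arranging a dlt stratum $W$ that maps \emph{birationally}, and not merely surjectively with possibly non-trivial cover structure, onto $V^\nu$. This is where the ``pure'' hypothesis is essential, since it rules out strictly smaller lc centers through $\eta_V$ and ensures that the minimal stratum of $(X', D')$ lying over $\eta_V$ has the same residue field as $V^\nu$ at that point; once such $W$ is in place the iterated adjunction runs mechanically. As an alternative to this choice of $W$, one could instead invoke Ambro's canonical bundle formula \cite{amb05} for $W \to V^\nu$, obtaining $(K_X + D)|_{V^\nu} \sim_\mathbb{Q} K_{V^\nu} + M_{V^\nu} + B_{V^\nu}$ with $B_{V^\nu}$ effective, after which effectivity of the moduli part $M_{V^\nu}$ becomes the remaining delicate point.
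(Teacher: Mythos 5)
Your argument has a genuine gap at exactly the point you flag as the ``main obstacle'': purity of $V$ does \emph{not} produce a dlt stratum $W$ mapping birationally onto $V^{\nu}$, and no amount of care in choosing $W$ will fix this. Purity only says that $(X,D)$ is lc at the generic point $\eta_V$; it says nothing about the function field of the minimal stratum lying over $\eta_V$. Concretely, take $X=\mathbb{C}^3$ with coordinates $(x,y,t)$, $D=\{y^2=x^2(x+t)\}$ with coefficient one, and $V=\{x=y=0\}$ the $t$-axis. Near a general point of $V$ the pair is analytically a nodal curve times a disc, so $V$ is a pure log canonical center. Blowing up $V$, the exceptional divisor $E$ has discrepancy $-1$ and the pair becomes snc near the general fiber over $V$; the unique minimal stratum over $\eta_V$ is the curve $E\cap\widetilde{D}$, which is \emph{irreducible} and maps $2\colon\!1$ onto $V$, because the two nodal branches $y=\pm x\sqrt{x+t}$ are exchanged by monodromy around $t=0$. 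So no stratum maps birationally to $V^{\nu}$, and your iterated adjunction cannot terminate in the claimed way; in general the stratum-to-center map also has positive-dimensional fibers. (A secondary point: your crepant relation $K_{X'}+D'=\mu^*(K_X+D)$ presupposes that $(X,D)$ is lc globally, which is not assumed; the correct statement, as in Lemma \ref{hacon} and Remark \ref{hacon'}, has an extra effective correction supported on divisors of discrepancy $<-1$, and one must use purity to see that these do not dominate $V$, so that effectivity survives restriction to the stratum.)

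The paper's proof is essentially the route you relegate to your final sentence, and the two points you leave open there are exactly its content. After passing to the dlt modification, one takes a minimal non-klt center $W$ of $(Y,D_Y)$ over $\eta_V$, forms the Stein factorization $W\xrightarrow{s}U\xrightarrow{t}V^{\nu}$, and observes that $(W,\Delta_W)\to U$ is an lc-trivial fibration because the non-klt centers of $(W,\Delta_W)$ are vertical over $V^{\nu}$ (dlt adjunction as in Lemma \ref{dlt} gives $\Delta_W\geq 0$). Effectivity of the moduli part is then not a ``remaining delicate point'' to be hoped for: it is supplied by Ambro's theorem (Theorem \ref{ambro}), which says the moduli b-divisor is b-nef and good, hence $\mathbb{Q}$-linearly equivalent to an effective divisor. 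Finally one descends along the finite map $t\colon U\to V^{\nu}$, using $K_U+\Delta_U\sim_{\mathbb{Q}}t^*((K_X+D)|_{V^{\nu}})$, to produce $\Delta_{V^{\nu}}\geq 0$; this finite-part step is also absent from your sketch. As written, the proposal does not constitute a proof, and its primary mechanism (birational stratum plus iterated adjunction) fails already in the example above.
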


\begin{remark} Recently, Fujino and Gongyo \cite{fg10} prove the much
stronger result that if $(X,D)$ is an lc pair and $V$ is a minimal
non-klt center of $(X,D)$, then there exists an effective
$\mathbb{Q}$-divisor $\Delta_V$ on $V$ such that
$(K_X+D)|_V\sim_{\mathbb{Q}}K_V+\Delta_V$ and $(V,\Delta_V)$ is klt.
\end{remark}

This result depends on Ambro's results on the moduli
($\mathbf{b}$-)divisor associated to an lc-trivial fibration .

%\begin{definition}
%A $\mathbb{Q}$-b-divisor $\mathbf{M}$ of $Y$ is called $b$-$nef$
%$and$ $good$ if there exists a proper birational model of $Y$,
%endowed with a proper contraction $h:Y'\rightarrow Z$, such that
%\begin{enumerate}
%\item $\mathbf{M}_{Y'}\sim_{\mathbb{Q}}h^*H$, for some nef and big
%$\mathbb{Q}$-divisor $H$ of $Z$.
%\item $\mathbf{M}=\overline{\mathbf{M}_{Y'}}$.
%\end{enumerate}
%\end{definition}

\begin{theorem}[Ambro]\label{ambro}
Let $f:(X,B)\rightarrow Y$ be an lc-trivial fibration such that the
generic geometric fiber $X_{\bar{\eta}}=X \times_Y
\mbox{\rm{Spec}}(\overline{k(Y)})$ is a projective variety and
$B_{\bar{\eta}}$ is effective. Then there exists a diagram\\

$$\begin{CD}
 (X,B) @.  @. (X^!,B^!)\\
 @VfVV   @.   @Vf^!VV\\
 Y @<\tau<<  \bar{Y} @>\varrho>> Y^!
\end{CD}$$\\\\
satisfying the following properties:

\begin{itemize}
\item $f^!:(X^!,B^!)\rightarrow Y^!$ is an lc-trivial fibration.
\item $\tau$ is generically finite and surjective and $\varrho$ is
surjective.
\item There exists a nonempty open subset $U\subset\bar{Y}$ and an
isomorphism

$$\xymatrix@1{
(X,B)\times_Y \bar{Y}|_U   \ar[rr]^{\simeq}  \ar[dr]& &  (X^!,B^!)
\times_{Y^!}\bar{Y}|_U \ar[dl] \\
 & U & }$$

\item Let $\mathbf{M}$ and $\mathbf{M}^!$ be the corresponding moduli
$\mathbb{Q}$-b-divisors. Then $\mathbf{M}^!$ is b-nef and big and
$\tau^*\mathbf{M}=\varrho^*(\mathbf{M}^!)$, which implies
$\mathbf{M}$ is b-nef and good. In particular, $\mathbf{M}$ is
$\mathbb{Q}$-linearly equivalent to an effective divisor.
\end{itemize}
\end{theorem}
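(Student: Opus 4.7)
The plan is to follow Ambro's Hodge-theoretic strategy, which realizes the moduli b-divisor $\mathbf{M}$ as (the first Chern class of) a line bundle coming from a variation of Hodge structures, so that the desired positivity reduces to positivity theorems for Hodge bundles plus properties of the associated period map.

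First I would set up the Hodge-theoretic picture. After replacing $f$ by a sufficiently high birational model (which by definition does not alter the moduli b-divisor) I may assume that $(X,B)$ is log smooth, the discriminant locus $\Sigma\subset Y$ is a simple normal crossings divisor, and $f$ is semistable in codimension one. Let $b$ be the smallest positive integer with $b(K_{X_{\bar\eta}}+B_{\bar\eta})\sim 0$. The associated cyclic $\mathbb{Z}/b\mathbb{Z}$-covering $\tilde X\to X$ (as in Kawamata's covering trick), followed by a log resolution, yields an algebraic fiber space $\tilde f\colon\tilde X\to Y$ whose geometric generic fiber has $\kappa=0$ and trivial log-canonical sheaf. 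On $\tilde X$ sits a polarized $\mathbb{Q}$-variation of Hodge structures on an appropriate direct summand of $R^{\dim X_{\bar\eta}}\tilde f_*\mathbb{Q}$, and Ambro's key identification is that $\mathbf{M}$ agrees, up to $\mathbb{Q}$-linear equivalence on every sufficiently high smooth model, with the Deligne canonical extension of the bottom Hodge bundle of this VHS.

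Next I would establish the two positivity statements together with the moduli diagram. By Schmid's nilpotent orbit theorem combined with Fujita-Kawamata-Viehweg semipositivity, the Deligne extension of the bottom Hodge bundle is nef on every sufficiently high smooth model, which is precisely b-nefness of $\mathbf{M}$. To construct $Y^!$, take the multivalued period map $Y\dashrightarrow \mathcal{D}/\Gamma$, let $Y^!$ be a smooth projective birational model of the closure of its image (possibly after a generically finite modification to ensure algebraicity), and let $\bar Y$ be a resolution of the graph closure of the induced rational map $Y\dashrightarrow Y\times Y^!$; define $(X^!,B^!)\to Y^!$ as the corresponding universal family followed by a log resolution. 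Over a dense open $U\subset\bar Y$ on which the classifying map is already regular, the universal property identifies the two pull-back families, yielding the isomorphism of the third bullet, and the equality $\tau^*\mathbf{M}=\varrho^*\mathbf{M}^!$ is then functoriality of the Hodge-theoretic construction under base change. By construction $Y^!\to \mathcal{D}/\Gamma$ is generically finite onto its image, so Griffiths' curvature computation for the Hodge metric gives strict positivity along horizontal directions, which combined with nefness forces $\mathbf{M}^!$ to be big. Pulling back through $\varrho$ and descending through the generically finite surjective $\tau$ shows that $\mathbf{M}$ is b-nef and abundant (equivalently, good); abundance for nef-and-good $\mathbb{Q}$-divisors then produces a positive multiple linearly equivalent to an effective divisor, as claimed.

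The main obstacle is the Hodge-theoretic identification $\mathbf{M}\sim_{\mathbb{Q}} c_1(\text{bottom Hodge bundle})$ as b-divisors on $Y$: one must match the boundary contributions to $\mathbf{M}$ coming from the log canonical thresholds of $f^*P$ along prime divisors $P\subset Y$ with the local monodromy contributions to the Deligne extension near $\Sigma$, and this requires a delicate case-by-case analysis depending on the type of degeneration of the VHS. A secondary but nontrivial technical point is algebraicity of the image of the period map, needed so that $Y^!$ can be taken projective; this ultimately relies on Griffiths-type algebraicity theorems for the image of the period map (or, in modern form, on o-minimal methods).
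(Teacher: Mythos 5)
First, a point of framing: the paper does not prove this statement at all --- it is quoted as Ambro's theorem and its ``proof'' is the citation \cite[Theorem 3.3]{amb05}. So the only meaningful comparison is with Ambro's published argument, and your outline does follow the same general route (cyclic covering trick associated to $b(K_{X_{\bar\eta}}+B_{\bar\eta})\sim 0$, the variation of Hodge structures on the cover, semipositivity of canonical extensions for b-nefness, and a classifying/period map to produce $Y^!$). As a proof, however, it has genuine gaps rather than merely deferred routine details.

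Concretely: (i) the identification of $\mathbf{M}$ with the Deligne canonical extension of the bottom Hodge bundle, including the matching of the boundary coefficients (log canonical thresholds of $f^*P$) with the local monodromy contributions along the discriminant, is the technical heart of the theorem, and you explicitly leave it as ``the main obstacle'' instead of establishing it; without it nothing downstream is available. (ii) Your construction of $f^!$ by taking ``the corresponding universal family'' over a model of the image of the period map does not work as stated: $\mathcal{D}/\Gamma$ is not a fine moduli space (and in general not even an algebraic coarse moduli space for the fibers with their boundaries), so there is no universal family to pull back; in Ambro's argument the existence of the lc-trivial fibration $(X^!,B^!)\rightarrow Y^!$ together with the isomorphism over an open subset of $\bar{Y}$ is obtained by a descent/isotriviality argument after the generically finite base change $\tau$, and this step is not optional, since the third bullet of the statement is precisely about that family. (iii) The bigness of $\mathbf{M}^!$ is not a consequence of ``Griffiths curvature positivity along horizontal directions plus nefness'': you would additionally need that the classifying map is generically finite on $Y^!$ (a Torelli-type input that must be arranged by the construction, not assumed) and an algebraic positivity statement on the chosen projective model, which is exactly the new content of the theorem (b-nef \emph{and big}) beyond classical semipositivity. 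The final ``in particular'' (effectivity of a multiple of $\mathbf{M}$, via $\tau^*\mathbf{M}=\varrho^*\mathbf{M}^!$, bigness of $\mathbf{M}^!$, and pushforward along the generically finite $\tau$) is fine once the preceding bullets are known, but as it stands your text is a roadmap of Ambro's strategy rather than a proof; for the purposes of this paper the correct move is simply to invoke \cite[Theorem 3.3]{amb05}.
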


\begin{proof}
See \cite[Theorem 3.3]{amb05}.
\end{proof}

Before giving the proof of \ref{sub}, we need the following useful
lemmas.

\begin{lemma}[Hacon]\label{hacon}
Let $X$ be a normal quasi-projective variety and $B$ a boundary
$\mathbb{R}$-divisor on $X$ such that $K_X + B$ is
$\mathbb{R}$-Cartier. Then, there exists a projective birational
morphism $f: Y\rightarrow X$ from a normal quasi-projective variety
$Y$ with the following properties.

\begin{enumerate}
\item $Y$ is $\mathbb{Q}$-factorial.

\item $a(E,X,B)\leq-1$ for every $f$-exceptional divisor $E$ on $Y$.

\item We put
$$B_Y=f^{-1}_*B+\sum_{E:\subset Ex(f)}E.$$
Then $(Y,B_Y)$ is dlt and
$$K_Y+B_Y = f^*(K_X+B)+\sum_{a(E,X,B)<-1}(a(E,X,B)+1)E.$$
In particular, if $(X,B)$ is lc, then $K_Y+B_Y = f^*(K_X+B).$
Moreover, if $(X,B)$ is dlt, then we can assume that $f$ is small,
that is, $f$ is an isomorphism in codimension one.
\end{enumerate}
\end{lemma}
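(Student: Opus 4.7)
My plan is to construct $Y$ by running a relative MMP on a suitable log resolution of $(X,B)$ over $X$. First I would pick a log resolution $g:W\to X$ of $(X,B)$ with $g$-exceptional prime divisors $E_1,\ldots,E_s$, and set
$$\Gamma_W = g^{-1}_*B + \sum_{i=1}^s E_i,$$
so that $(W,\Gamma_W)$ is log smooth, in particular dlt. By the definition of discrepancy,
$$K_W + \Gamma_W = g^*(K_X+B) + \sum_{i=1}^s (1+a(E_i,X,B))E_i = g^*(K_X+B) + F - G,$$
where $F=\sum_{a(E_i,X,B)>-1}(1+a(E_i,X,B))E_i$ and $G=-\sum_{a(E_i,X,B)\leq -1}(1+a(E_i,X,B))E_i$ are effective $g$-exceptional $\mathbb{R}$-divisors. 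The goal is to use a minimal model program to contract exactly the components of $F$ while preserving those of $G$.

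Next I would run a $(K_W+\Gamma_W)$-MMP over $X$. Since $(W,\Gamma_W)$ is log smooth, a small ample perturbation brings the pair into the klt setting where BCHM applies, guaranteeing termination (with scaling); taking limits, or applying a dlt variant of BCHM, one obtains a projective birational morphism $f:Y\to X$ from a $\mathbb{Q}$-factorial variety $Y$ together with the strict transform $B_Y$ of $\Gamma_W$. Because $K_W+\Gamma_W - g^*(K_X+B) = F - G$ is $g$-exceptional with positive part $F$, repeated application of the negativity lemma along the steps of the MMP forces the divisors contracted by $W\dashrightarrow Y$ to be exactly the components of $\mathrm{Supp}(F)$ — i.e.\ those $E_i$ with $a(E_i,X,B)>-1$ — while the components of $G$ survive. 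Thus the $f$-exceptional divisors on $Y$ are precisely the $E_i$ with $a(E_i,X,B)\leq -1$, each appearing in $B_Y$ with coefficient $1$, so comparison of coefficients yields
$$K_Y+B_Y = f^*(K_X+B)+\sum_{a(E,X,B)<-1}(a(E,X,B)+1)E,$$
and the dlt-ness of $(Y,B_Y)$ is inherited from $(W,\Gamma_W)$ through the MMP.

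For the refinement when $(X,B)$ is itself dlt, I would choose $g$ to be an isomorphism over the log-smooth locus of $(X,B)$, which is possible by the definition of dlt and ensures that every $g$-exceptional divisor has $a(E_i,X,B)>-1$. Then $G=0$, $F$ is supported on all of $\mathrm{Ex}(g)$, and the MMP above contracts every $g$-exceptional divisor, producing the claimed small modification $f:Y\to X$. The principal obstacle is arranging for the MMP to terminate with precisely the intended set of contracted divisors; this amounts to combining BCHM (for existence and termination of the relative MMP, after a suitable perturbation) with the negativity lemma (to identify the contracted locus with $\mathrm{Supp}(F)$ and verify that no component of $G$ is ever affected).
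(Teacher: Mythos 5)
The paper offers no argument of its own here: its ``proof'' is the citation to [Fuj09, Theorem 10.4], and your sketch is essentially the proof of that cited result (Hacon's construction): log resolution, relative $(K_W+\Gamma_W)$-MMP over $X$, and the negativity lemma to see that the pushforward of $F$ vanishes on the final model, which gives (2) and the discrepancy formula in (3). So the skeleton is the right one; the issue is the step you wave through.

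The genuine gap is the justification that the MMP can be run and terminates while still delivering conclusion (3). The pair $(W,\Gamma_W)$ is dlt but not klt, since the exceptional divisors (and possibly components of $B$) appear with coefficient one, and a ``small ample perturbation'' does not ``bring the pair into the klt setting'': adding an ample divisor leaves the coefficient-one components untouched, while lowering those coefficients produces a different pair whose MMP is not a $(K_W+\Gamma_W)$-MMP. That substitution causes two problems. The harmless one is the bookkeeping of which divisors get contracted (fixable by taking the perturbation smaller than $\min\{1+a(E_i,X,B): a(E_i,X,B)>-1\}$, so the positive/negative decomposition keeps the same support). The serious one is that the dlt-ness of $(Y,B_Y)$ with its full coefficient-one boundary is obtained in the standard argument precisely because each step is a step of the MMP for the dlt pair itself, and dlt is preserved under its own flips and divisorial contractions; if you instead run the MMP of a klt perturbation, dlt-ness of the unperturbed pair on the end product does not follow, and ``taking limits'' is not a substitute. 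The correct route is the one you mention only parenthetically: run the $(K_W+\Gamma_W)$-MMP with scaling of an ample divisor and invoke the extension of BCHM to $\mathbb{Q}$-factorial dlt pairs whose boundary is big over the (here birational) base; but that extension is exactly the nontrivial input supplied by [Fuj09, Theorem 10.4], so it must be stated and justified rather than treated as a black box. Finally, note that negativity only shows that every component of $F$ is contracted ($F_Y-G_Y$ is $f$-nef and $f$-exceptional, hence $G_Y\geq F_Y$, hence $F_Y=0$); it does not show that ``no component of $G$ is ever affected.'' Fortunately only the former is needed: (2) and (3) follow from $F_Y=0$ alone, and in the dlt case Szab\'o's resolution gives $G=0$, so smallness of $f$ also follows.
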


\begin{proof}
See e.g. \cite[Theorem 10.4]{fuj09}.
\end{proof}

\begin{remark}\label{hacon'}
Lemma \ref{hacon} still holds if the coefficients of some components
of $B$ are greater than $1$. But we need to replace (3) by
\begin{enumerate}
\item[(3')]Let $$B_Y=f^{-1}_*B^{\leq 1}+\mbox{Supp}f^{-1}_*B^{>1}+\sum_{E:\subset
Ex(f)}E.$$ Then $(Y,B_Y)$ is dlt and
$$K_Y+B_Y=f^*(K_X+B)+\sum_{a(F,X,B)<-1}(a(F,X,B)+1)F.$$
\end{enumerate}

\end{remark}

\begin{lemma}[Adjunction for dlt pairs]\label{dlt}
Let $(X,D)$ be a dlt pair. We put $S=\lfloor D\rfloor$ and let
$S=\sum_{i\in I}S_i$ be the irreducible decompostion of $S$. Then,
$W$ is a non-klt centre for the pair $(X,D)$ with
$\mbox{\rm{codim}}_X W=k$ if and only if \ $W$ is an irreducible
component of \ $S_{i_1}\cap S_{i_2}\cap \cdots \cap S_{i_k} $ for
some $\{ i_1,i_2,\cdots ,i_k\}\subset I$. By adjunction, we obtain
$$K_{S_{i_1}}+\mbox{\rm{Diff}}(D-S_{i_1})=(K_X+D)|_{S_{i_1}},$$
and $(S_{i_1},\mbox{\rm{Diff}}(D-S_{i_1}))$ is dlt. Note that
$S_{i_1}$ is normal, $W$ is a non-klt center for the pair $(S_{i_1},
\mbox{\rm{Diff}}(D-S_{i_1}))$, $S_{i_j}|_{S_{i_1}}$ is a reduced
component of $\mbox{\rm{Diff}}(D-S_{i_1})$ for $2\leq j \leq k$, and
$W$ is an irreducible component of $(S_{i_2}|_{S_{i_1}})\cap
(S_{i_3}|_{S_{i_1}})\cap \cdots \cap (S_{i_k}|_{S_{i_1}}).$ By
applying adjunction $k$ times, we obtain a $\mathbb{Q}$-divisor
$\Delta \geq 0$ on $W$ such that
$$(K_X+D)|_{W}= K_W+\Delta$$
and $(W,\Delta)$ is dlt.
\end{lemma}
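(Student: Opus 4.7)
My plan is to work locally on a carefully chosen open subset and then invoke normality of components of the dlt boundary. By the definition of dlt, there is a log resolution $\mu : \widetilde X \to X$ and an open set $U \subseteq X$ such that $\mu$ is an isomorphism over $U$, the pair $(U, D|_U)$ is log smooth, and every divisor $E$ over $X$ with $a(E,X,D) = -1$ has image meeting $U$. Over $U$ the non-klt centers are precisely the strata of the simple normal crossings divisor $S|_U$, so since each non-klt center $W$ of $(X,D)$ meets $U$ in a dense open subset, $W$ is a non-klt center of codimension $k$ if and only if $W$ is an irreducible component of some intersection $S_{i_1} \cap \cdots \cap S_{i_k}$.

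Next I would invoke normality of each component $S_i$ of $\lfloor D \rfloor$. This is the standard result of Koll\'ar--Shokurov; alternatively, one can deduce it from Lemma \ref{hacon} by taking a small $\mathbb{Q}$-factorial dlt modification of $(X,D)$, observing that components of the boundary of a $\mathbb{Q}$-factorial dlt pair are normal, and pushing down through the small birational morphism. Once $S_{i_1}$ is known to be normal, one defines the \emph{different} $\mathrm{Diff}_{S_{i_1}}(D-S_{i_1})$ by the relation $(K_X+D)|_{S_{i_1}} = K_{S_{i_1}} + \mathrm{Diff}_{S_{i_1}}(D-S_{i_1})$. Its effectiveness is a local computation at codimension one points of $S_{i_1}$: at a point where $(X,D)$ is log smooth a transverse calculation shows the coefficient is nonnegative and equals $1$ exactly along $S_{i_j}|_{S_{i_1}}$ for $j \neq 1$; at other codimension one points one appeals to the classification of log canonical surface singularities.

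To see that $(S_{i_1}, \mathrm{Diff}_{S_{i_1}}(D-S_{i_1}))$ is again dlt, I would pull back $\mu$ to the strict transform of $S_{i_1}$, which produces a log resolution of the restricted pair over the neighborhood of $S_{i_1}$ in which $(X,D)$ is log smooth. Every non-klt center of the restricted pair is also a non-klt center of $(X,D)$ contained in $S_{i_1}$, hence meets the log smooth locus, verifying the dlt condition. Transversality of the $S_{i_j}$'s along their common intersection forces $S_{i_j}|_{S_{i_1}}$ to be a reduced component of $\mathrm{Diff}$ for $2 \leq j \leq k$, and $W$ remains a non-klt center of the restricted pair lying in $\bigcap_{j \geq 2}(S_{i_j}|_{S_{i_1}})$.

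Finally I would iterate: at each stage the next component is normal by the dlt property of the pair produced in the previous stage, and the same adjunction applies. After $k$ steps one reaches the normal variety $W$ and obtains $(K_X+D)|_W = K_W + \Delta$ with $\Delta$ effective and $(W,\Delta)$ dlt. The main technical obstacle is preserving the dlt property, and hence the normality of the next component, at every stage; this reduces to the elementary fact that a simple normal crossings divisor restricts to a simple normal crossings divisor on any of its strata, which guarantees that log smoothness propagates through the induction.
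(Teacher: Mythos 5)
The paper does not actually prove this lemma: its ``proof'' is the single line ``See \cite[Proposition 3.9.2]{cor07}'', so your sketch should be measured against the standard argument behind that citation (Corti's book, or Koll\'ar--Mori 5.50--5.52), and in outline that is exactly the route you take: reduce to the log smooth open set $U$ from the dlt definition, invoke normality of the components of $\lfloor D\rfloor$, define $\mathrm{Diff}$ by adjunction, check it is effective and that the restricted pair is dlt, and iterate $k$ times.

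Two points in your sketch are weaker than they look. First, you only justify the ``only if'' half of the equivalence: from the dlt definition a non-klt center is not contained in $X\setminus U$, so its trace on $U$ is a stratum of $S|_U$. The converse --- that \emph{every} irreducible component $W$ of $S_{i_1}\cap\cdots\cap S_{i_k}$ is a non-klt center of codimension $k$ --- is not a formal consequence of the definition; one must rule out that such a component is contained in the non-snc locus, and this requires a discrepancy computation (a divisor whose center lies in two distinct coefficient-one components has discrepancy $\le -1$) together with a connectedness-type argument, or the inductive adjunction that the cited proof actually runs. This is precisely where the content of the lemma sits, so asserting the ``if and only if'' from the behaviour over $U$ alone is a genuine gap in the sketch. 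Second, your ``alternatively'' argument for normality of $S_{i_1}$ --- pass to a small $\mathbb{Q}$-factorial dlt modification and push the normal strict transform back down --- does not work as stated: the image of a normal variety under a proper birational morphism need not be normal, so nothing descends for free. Normality of the boundary components of a dlt pair really does rest on the Koll\'ar--Shokurov connectedness theorem (or the equivalent argument via rational connectedness of the fibers of a resolution), which is the primary reference you give; keep that route and drop the shortcut.
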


\begin{proof}
See \cite[Proposition 3.9.2]{cor07}.
\end{proof}

\begin{proof}[Proof of Proposition \ref{sub}]
Applying Lemma \ref{hacon} and Remark \ref{hacon'}, we may get a
morphism $f:Y \rightarrow X$ satisfying the properties of Lemma
\ref{hacon}. Let $D_Y=f^{-1}_*D^{\leq
1}+\mbox{Supp}f^{-1}_*D^{>1}+\sum_{E:\subset Ex(f)}E$. Then we have
$$f^*(K_X+D)=K_Y+D_Y-\sum_{a(F,X,D)<-1}(a(F,X,D)+1)F,$$  and the pair
$(Y,D_Y)$ is dlt. Since $V$ is a pure log canonical center of
$(X,D)$, $F$ is vertical over $V$ if $a(F,X,D)<-1$.

 Let $W$ be a minimal non-klt center of $(Y,D_Y)$ over the generic
point of $V$ and $\nu: V^{\nu} \rightarrow V$ the normalization of
$V$. We obtain the following diagram
$$\xymatrix{ &W  \ar[d]^g \ar[ld]_s \ar @{^{(}->}[rr]&  & Y \ar[d]^{f} \\
U \ar[r]^t& V^{\nu} \ar[r]^{\nu}& V \ar @{^{(}->}[r] & X }$$ where
$g: W\to V^{\nu}$ is the induced morphism and
$W\overset{s}{\rightarrow} U \overset{t}{\rightarrow} V^{\nu}$ is
the Stein factorization of $g$.

 By
Lemma \ref{dlt}, there exists a log pair $(W, \Delta _W)$, where
$\Delta_W\geq 0$, such that
$$K_W+\Delta_W\sim_{\mathbb{Q}}(K_Y+D_Y-\sum_{a(F,X,D)<-1}(a(F,X,D)+1)F)|_W \sim_{\mathbb{Q}}f^*(K_X+D)|_W,$$
and the non-klt centers of $(W, \Delta _W)$ are vertical over
$V^{\nu}$, so $(W,\Delta_W)$ has klt singularities over the generic
point of $V^{\nu}$. It follows that $(W,\Delta_{W})$ is klt over the
generic point of $U$. Moreover,
$$K_W+ \Delta_W \sim_{\mathbb{Q}} g^*((K_X+D)|_{V^{\nu}})\sim_{\mathbb{Q}}s^*((K_X+D)|_U).$$
Therefore, $s:(W,\Delta_W)\rightarrow U$ is an lc-trivial fibration
as defined in \cite[Definition 2.1]{amb04}.

We may write $(K_X+D)|_U\sim_{\mathbb{Q}}K_{U}+M+B$, where $M$ is
the moduli part and $B$ is the boundary part of this lc-trivial
fibration. Since $\Delta_W\geq 0$, $B\geq 0$. By Theorem
\ref{ambro}, we may assume that $M$ is effective. Let
$\Delta_{U}=M+B$, then,
$$(K_X+D)|_U \sim_{\mathbb{Q}} K_{U}+\Delta_{U}$$
and $\Delta_{U}\geq 0.$ Since $t:U\rightarrow V^{\nu}$ is finite and
$K_U+\Delta_{U}\sim_{\mathbb{Q}} t^*((K_X+D)|_{V^{\nu}})$, it is
easy to see that there exists an effective $\mathbb{Q}$-divisor
$\Delta_{V^{\nu}}$ on $V^{\nu}$ such that
$$(K_X+D)|_{V^{\nu}}\sim_{\mathbb{Q}}K_{V^{\nu}}+\Delta_{V^{\nu}}.
$$ \end{proof}

\section{Creating isolated non-klt centers}
\begin{proposition}\label{isolated}
Assume that Theorem \ref{main thm} holds for varieties of dimensions
$<n$. Let $f:X\rightarrow Y$ be the Iitaka fibration satisfying the
hypotheses of Theorem \ref{main thm} with $\mbox{\rm{dim}}X=n$ and \
$\mbox{\rm{dim}}Y=n'$. Then there exist positive constants $\alpha$
and $\beta$ depending on $n,b$ and $k$, such that for any very
general point $y\in Y$ there is an effective $\mathbb{Q}$-divisor
$D_y$ such that
\begin{enumerate}
\item $D_y\sim_{\mathbb{Q}}\lambda(K_Y+M_Y+B_Y)$, where $\lambda<
\dfrac{\alpha}{\mbox{\rm{vol}}(Y,K_Y+M_Y+B_Y)^{1/n'}}+\beta$;
\item $y$ is an isolated point of \;\mbox{\rm{Non-klt}}$(Y,D_y)$.
\end{enumerate}
\end{proposition}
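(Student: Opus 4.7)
The plan is to produce $D_y$ by an iterative descent on the dimension of the minimal non-klt center through $y$, combining Subadjunction (Proposition~\ref{sub}), the covering family construction (Proposition~\ref{fiber}), and the cutting-down Lemma~\ref{mckernan}, with an induction on $n$ coming from Theorem~\ref{main thm} in lower dimensions. By Lemma~\ref{canonical} we may replace $Y$ by its log canonical model and assume $K_Y+M_Y+B_Y$ is ample; set $v=\mbox{\rm vol}(Y,K_Y+M_Y+B_Y)$.

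The initial step is a standard high-multiplicity argument using the multiplier-ideal criterion together with Nadel vanishing: at any very general $y$, there is an effective $\Gamma_0\sim_{\mathbb{Q}}\lambda_0(K_Y+M_Y+B_Y)$ with $\mbox{\rm mult}_y(\Gamma_0)>n'$ and $\lambda_0$ of order $n'/v^{1/n'}$. After tie-breaking by an ample perturbation and taking the log canonical threshold at $y$, one obtains a pure log canonical center $V$ through $y$ of some dimension $k<n'$, and hence a covering family of pure log canonical centers of weight $w$ of order $v^{1/n'}/n'$.

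For the inductive descent, suppose we have produced a covering family $(\Delta_t,V_t)$ of pure log canonical centers of dimension $k\geq 1$ and weight $w$ at a very general point. Apply Proposition~\ref{fiber} to realise this family as fibres of $\varphi\colon Y'\to T$ on a birational model $Y'\to Y$, with a compatible Iitaka fibration $f'\colon X'\to Y'$. By Lemma~\ref{iitaka} and Lemma~\ref{restriction}, for a very general $t$ the restriction $f'|_{W'_t}\colon W'_t\to V'_t$ is an Iitaka fibration satisfying the hypotheses of Theorem~\ref{main thm} in dimension $<n$: the generic fibre is unchanged, so $b$ and the Betti bound are preserved, and the variation remains zero because $M_{V'_t}\sim_{\mathbb{Q}}M_{Y'}|_{V'_t}$ and $M_{Y'}\sim_{\mathbb{Q}}0$ by Theorem~\ref{tfae}. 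By Subadjunction (Proposition~\ref{sub}) on $Y'$ one has
\[
(K_{Y'}+M_{Y'}+B_{Y'})|_{(V'_t)^{\nu}}\sim_{\mathbb{Q}} K_{(V'_t)^{\nu}}+\Delta_{(V'_t)^{\nu}},
\]
with $\Delta_{(V'_t)^{\nu}}$ effective and dominating $M_{(V'_t)^{\nu}}+B_{(V'_t)^{\nu}}$. By the inductive hypothesis there is a uniform $N=N(n,b,k)$ such that $|N(K_{(V'_t)^{\nu}}+\Delta_{(V'_t)^{\nu}})|$ gives a birational map, whence by Lemma~\ref{vol} a birational covering family on $(V'_t)^{\nu}$ of weight at least $1/N$ with respect to the restriction of the ample divisor $K_{Y'}+M_{Y'}+B_{Y'}$. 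Combining these two families via Lemma~\ref{mckernan} produces a covering family on $Y'$ of strictly smaller dimension and weight $w/(wN+1)$, which can be pushed back to $Y$. Iterating at most $n'-1$ times yields an isolated non-klt center at $y$; tracking the weights through the iterations gives a bound of the form $\lambda<\alpha/v^{1/n'}+\beta$, with $\alpha,\beta$ depending only on $n,b,k$.

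The main obstacle is verifying that the restricted Iitaka fibration $f'|_{W'_t}$ really satisfies the hypotheses of Theorem~\ref{main thm} with the same invariants $b$ and $k$, so that the inductive bound $N(n,b,k)$ applies uniformly across all $t$ and across each step of the descent; this rests on Lemmas~\ref{iitaka} and~\ref{restriction} together with Theorem~\ref{tfae}. A secondary subtlety is translating the inductive birational bound on $(V'_t)^{\nu}$, phrased in terms of $K_{(V'_t)^{\nu}}+\Delta_{(V'_t)^{\nu}}$, into a bound with respect to $(K_{Y'}+M_{Y'}+B_{Y'})|_{(V'_t)^{\nu}}$; this uses the ampleness arranged by Lemma~\ref{canonical} together with the Subadjunction identity, which guarantees that $(K_{Y'}+M_{Y'}+B_{Y'})|_{(V'_t)^{\nu}}$ is itself big, so that Lemma~\ref{d-bir} can convert birationality of one into birationality of the other.
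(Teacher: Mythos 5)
Your outline follows the paper's general skeleton (initial multiplicity construction at $y$, covering families of pure lc centers via Proposition \ref{fiber}, an inductive lower bound on the volumes of the centers, cutting down the dimension with Lemma \ref{mckernan}, and tie-breaking at the end), but the decisive step is not justified. You assert that Subadjunction on $Y'$ gives $(K_{Y'}+M_{Y'}+B_{Y'})|_{(V'_t)^{\nu}}\sim_{\mathbb{Q}}K_{(V'_t)^{\nu}}+\Delta_{(V'_t)^{\nu}}$ with $\Delta_{(V'_t)^{\nu}}$ effective \emph{and dominating} $M_{(V'_t)^{\nu}}+B_{(V'_t)^{\nu}}$. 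Proposition \ref{sub} only gives effectivity of the subadjunction divisor for the pair whose lc center is $V'_t$ (and its left-hand side must include the divisor $\Delta_t$ cutting out the center); it says nothing relating $\Delta_{(V'_t)^{\nu}}$ to the moduli and boundary parts of the restricted Iitaka fibration $W'_t\to V'_t$. Without that domination you cannot transfer the inductive birationality bound for $K_{V_t}+M_{V_t}+B_{V_t}$ into a uniform lower bound for $\mathrm{vol}\bigl(V'_t,(K_{Y'}+M_{Y'}+B_{Y'})|_{V'_t}\bigr)$, and Lemma \ref{d-bir} cannot bridge two divisors neither of which is known to dominate the other. This is precisely the ``main difficulty'' the paper singles out in the introduction, so the gap sits at the heart of the argument.

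The paper's resolution uses hypothesis (2) of Theorem \ref{main thm} in an essential way that your argument never invokes: by Theorem \ref{lai} one takes a good minimal model $X'$ of $X$, proves (Lemma \ref{pullback}) that the log canonical model $Y'$ of $(Y,M_Y+B_Y)$ is the weak canonical model of $X'$ with $K_{X'}\sim_{\mathbb{Q}}f'^*(K_{Y'}+M_{Y'}+B_{Y'})$, and then applies Subadjunction on the \emph{total space} $X'$ to the pure lc center $W'_1=f'^{-1}(V'_1)$ of $(X',f'^*D'_1)$. Since $K_{W_1^{\nu}}+\Delta_{W_1^{\nu}}$ is then $\mathbb{Q}$-linearly a pullback of $(K_{Y'}+M_{Y'}+B_{Y'}+D'_1)|_{V_1^{\nu}}$, comparing sections via the projection formula and the canonical bundle formula for $W_1\to V_1$ gives $\mathrm{vol}\bigl(V'_1,(K_{Y'}+M_{Y'}+B_{Y'}+D'_1)|_{V'_1}\bigr)\geq \mathrm{vol}(V_1,K_{V_1}+M_{V_1}+B_{V_1})\geq\delta(n-1,b,k)$ (Lemma \ref{vol-lamma}), which is what keeps the weights bounded at every step of the descent. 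To repair your proof you would either have to prove your domination claim (it is not a formal consequence of Kawamata--Ambro subadjunction) or reproduce this passage through the good minimal model; as written, the descent step fails. A smaller point: replacing $Y$ outright by its log canonical model is delicate because that model may be singular, which is why the paper keeps a smooth $Y$ together with the morphism $\phi:Y\to Y'$.
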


\begin{proof}
 Take a very general point $y\in Y$. Since $K_Y+M_Y+B_Y$ is big, by the argument in the proof of
\cite[Theorem 6.2]{pac07b}, we can pick an effective
$\mathbb{Q}$-divisor $D_0\sim_{\mathbb{Q}}\lambda_0(K_Y+M_Y+B_Y)$
which has multiplicity $>n_0$ at $y$, where $n_0=n'$ and $\lambda_0<
n_0(\mbox{\rm{vol}}(Y,K_Y+M_Y+B_Y))^{-1/n_0}+\varepsilon_0$ with
$1\gg \varepsilon_0>0$. Hence there is a component $V_0$ of
Non-klt$(Y,D_0)$ passing through $y$. Multiplying $D_0$ by a
positive rational number $\leq 1$, we can assume that $V_0$ is a
pure log canonical center of $(Y,D_0)$.

By Proposition \ref{fiber}, we may replace $Y$ with a higher smooth
birational model such that there exists a morphism
$\varphi:Y\rightarrow T$ satisfying the properties of \ref{fiber}.
Therefore, the point $y$ is contained in a very general fiber $V_1$
of $\varphi$ and there is an effective $\mathbb{Q}$-divisor
$D_1\sim_{\mathbb{Q}}\lambda_1(K_Y+M_Y+B_Y)$ on $Y$ with
$\lambda_1\leq \lambda_0(n_0-1)!<
n_0!(\mbox{vol}(Y,K_Y+M_Y+B_Y))^{-1/n_0}+\varepsilon_0(n_0-1)!$,
such that $V_1$ is a pure log canonical center of $(Y,D_1)$.

 By Lemma \ref{canonical}, there is a log canonical
model $Y'$ of $(Y, M_Y+B_Y)$. Replacing $Y$ with a higher smooth
birational model, we may assume that there is a morphism $\phi:
Y\rightarrow Y'$. Let $M_{Y'}=\phi_*M_Y$ and $B_{Y'}=\phi_*B_Y$.
Then $K_{Y'}+M_{Y'}+B_{Y'}$ is $\mathbb{Q}$-Cartier and ample on
$Y'$.

By our assumption, the generic fiber of $f$ has a good minimal
model. Applying Theorem \ref{lai}, there exists a good minimal model
$X'$ of $X$. Replacing $X$ with a higher smooth birational model, we
may assume that there is a morphism $\psi: X\rightarrow X'$. Hence,
we obtain a diagram
$$\xymatrix{X \ar[d]_f \ar[r]^{\psi} & X' \ar@{-->}[d]^{f'}\\
Y\ar[d]_{\varphi} \ar[r]^{\phi} & Y'\\
T }
$$
where  $f'$ is the induced rational map.

\begin{remark}\label{rmk1}
The generic fiber of $f$ may have changed after running the Minimal
Model Program, so $f$ may not satisfy the hypotheses of Theorem
\ref{main thm}. But since our new $X$ is a higher birational model
of the original one, we do not change either $M_Y$ or $B_Y$ by the
Canonical Bundle Formula.
\end{remark}

\begin{lemma}\label{pullback}
We have the following:
\begin{enumerate}
\item $Y'$ is isomorphic to the weak canonical model $(X')^w$ of \;$X'$ in the sense that $$(X')^w=\mbox{\rm{Proj}}\bigoplus_{m\geq0}H^0(X',\mathcal{O}_{X'}(mK_{X'})).$$
\item $f'$ is a morphism and $K_{X'}\sim_{\mathbb{Q}}f'^*(K_{Y'}+M_{Y'}+B_{Y'})$.
\end{enumerate}
\end{lemma}
\begin{proof}

%\xymatrix{X' \ar[rr]^{\psi_{\mbox{can}}} \ar[dr]_{f'} & & X'_{can} \ar[dl]^{f'_{can}}\\
%& Y' &
% }$$

$X'$ is a good minimal model, so $X'$ admits a morphism to its weak
canonical model $(X')^w$. On the other hand, $K_{Y'}+M_{Y'}+B_{Y'}$
is ample on $Y'$, so
$$Y'=\mbox{\rm{Proj}}\bigoplus_{m\geq0}H^0(Y',\mathcal{O}_{Y'}(\lfloor
m(K_{Y'}+M_{Y'}+B_{Y'})\rfloor)).$$
 If $m\in
\mathbb{Z}_{>0}$ is sufficiently divisible, by the Canonical Bundle
Formula we have
\begin{eqnarray*}
H^0(X',\mathcal{O}_{X'}(mK_{X'}))&\cong &H^0(X,\mathcal{O}_X(mK_X))\\
 &\cong &H^0(Y,\mathcal{O}_Y(\lfloor
m(K_Y+M_Y+B_Y)\rfloor))\\
&\cong &H^0(Y',\mathcal{O}_{Y'}(\lfloor
m(K_{Y'}+M_{Y'}+B_{Y'})\rfloor)).
\end{eqnarray*}
Hence $Y'$ is the weak canonical model of $X'$ and (2) follows from
(1).
\end{proof}
Now let $y'=\phi(y)$, $V'_1=\phi(V_1)$, and $D'_1=\phi_*(D_1)$ and
let $n_1=\mbox{dim}V_1=\mbox{dim}V'_1$. Since $V_1$ is a pure log
canonical center of $(Y,D_1)$ and $y'$ is very general, it follows
that $V'_1$ is a pure log canonical center of
$(Y',M_{Y'}+B_{Y'}+D'_1)$ at $y'$. Let $W_1=f^{-1}(V_1)$,
$W'_1=f'^{-1}(V'_1)$, $V^{\nu}_1$ the normalization of $V'_1$,
$W^{\nu}_1$ the normalization of $W'_1$ and
$\gamma:W^{\nu}_1\rightarrow V^{\nu}_1$ the induced morphism. We
have the following diagram
$$ \xymatrix@!0{ W_1 \ar [rr] \ar [dd]_{f_{W_1}} \ar@{^{(}->}[dr] &
&W_1^{\nu} \ar [rr] \ar '[d]^{\gamma}[dd]&    &
W_1'\ar@{^{(}->}[dr]  \ar '[d][dd] \\
& X \ar [rrrr] \ar [dd]&    &   &   &X' \ar [dd] \\
V_1 \ar '[r][rr] \ar@{^{(}->}[dr]\ar[dd] &  & V_1^{\nu} \ar [rr] &
&V_1'
\ar@{^{(}->}[dr]\\
&   Y \ar [rrrr] \ar [dd] & & &  &Y'\\
t_1 \ar @{} [dr]|\in\\
& T }
$$
By Lemma \ref{iitaka} and Lemma \ref{restriction}, the morphism
$f_{W_1}: W_1\rightarrow V_1$ is the Iitaka fibration of
$(W_1,K_{W_1})$ and  the moduli part $M_{V_1}$  of $f_{W_1}$ is
$\mathbb{Q}$-linearly equivalent to the restriction of $M_Y$ to
$V_1$. Thus we can assume that $f_{W_1}$ satisfies the hypotheses of
Theorem \ref{main thm}.

\begin{remark}
As in Remark \ref{rmk1}, the generic fiber of $f_{W_1}$ may be
different from the original one. However this does not affect the
computation of $M_{V_1}$ and $B_{V_1}$.

%By Remark \ref{rmk1}, the generic fiber of $f_{W_1}$ might be different
%from the original one. But since $t$ is very general and $X$ is a
%higher birational model of the original $X$, $W_1$ is a higher
%birational model of the one who has the original generic fiber
%satisfying our assumptions. Hence the inductive step still works.
\end{remark}

\begin{lemma} There exists a constant $\delta>0$
depending on $n-1,b$ and $k$, such that
$\mbox{\rm{vol}}(V_1,K_{V_1}+M_{V_1}+B_{V_1})\geq \delta.$
\end{lemma}
\begin{proof}
Since $\mbox{dim}W_1<n$, by our assumptions in Proposition
\ref{isolated}, there exists a positive integer $m_1$ depending on
$n-1,b$ and $k$, such that $\phi_{m_1(K_{V_1}+M_{V_1}+B_{V_1})}$
gives a birational map. Then
$\mbox{\rm{vol}}(V_1,m_1(K_{V_1}+M_{V_1}+B_{V_1}))\geq 1$ by Lemma
\ref{vol}. Therefore,
\begin{eqnarray*}
\mbox{\rm{vol}}(V_1,K_{V_1}+M_{V_1}+B_{V_1})&=&
\frac{1}{m_1^{n_1}}\mbox{\rm{vol}}(V_1,m_1(K_{V_1}+M_{V_1}+B_{V_1}))\\
&\geq& \frac{1}{m_1^{n_1}}\\
&\geq& \frac{1}{m_1^{n-1}}.
\end{eqnarray*}
Now let $\delta$ be $1/m_1^{n-1}$.
\end{proof}

We have the following fact.
\begin{lemma}\label{vol-lamma}
$\mbox{\rm{vol}}(V'_1,(K_{Y'}+M_{Y'}+B_{Y'}+D'_1)|_{V'_1})\geq
\delta.$
\end{lemma}
\begin{proof}
By Lemma \ref{pullback}, we have
$K_{X'}\sim_{\mathbb{Q}}f'^*(K_{Y'}+M_{Y'}+B_{Y'})$. $V_1'$ is a
pure log canonical center of $(Y',M_{Y'}+B_{Y'}+D'_1)$ and $y'$ is a
very general point of $Y'$, so $W'_1$ is a pure log canonical center
of $(X',f'^*D'_1)$.

 By Proposition \ref{sub}, there exists an effective
 $\mathbb{Q}$-divisor $\Delta_{W^{\nu}_1}$ on $W^{\nu}_1$, such that
$$(K_{X'}+f'^*D'_1)|_{W^{\nu}_1}\sim_{\mathbb{Q}}K_{W^{\nu}_1}+\Delta_{W^{\nu}_1}.$$
On the other hand,
$$(K_{X'}+f'^*D'_1)|_{W^{\nu}_1}\sim_{\mathbb{Q}}\gamma^*((K_{Y'}+M_{Y'}+B_{Y'}+D_1')|_{V_1^{\nu}}).$$
For all $  m \in \mathbb{Z}_{>0}$ sufficiently divisible, by the
Projection Formula we have
$$h^0(W^{\nu}_1,
\mathcal{O}_{W^{\nu}_1}(m(K_{W^{\nu}_1}+\Delta_{W_1^{\nu}})))=h^0(V^{\nu}_1,\mathcal{O}_{V^{\nu}_1}(m(K_{Y'}+M_{Y'}+B_{Y'}+D'_1)|_{V^{\nu}_1})).\eqno{(*)}$$
By the Canonical Bundle Formula,
$$
h^0(W_1,\mathcal{O}_{W_1}(mK_{W_1}))
=h^0(V_1,\mathcal{O}_{V_1}(m(K_{V_1}+M_{V_1}+B_{V_1}))). \eqno{(**)}
$$
Since $W_1$ is smooth and $\Delta_{W^{\nu}_1}\geq 0$, it follows
that
$$h^0(W^{\nu}_1,\mathcal{O}_{W^{\nu}_1}(m(K_{W^{\nu}_1}+\Delta_{W_1^{\nu}})))\geq
h^0(W_1,\mathcal{O}_{W_1}(mK_{W_1})).$$ Therefore, by equations
$(*)$ and $(**)$,
$$h^0(V^{\nu}_1,\mathcal{O}_{V^{\nu}_1}(m(K_{Y'}+M_{Y'}+B_{Y'}+D'_1)|_{V_1^{\nu}}))\geq h^0(V_1,\mathcal{O}_{V_1}(m(K_{V_1}+M_{V_1}+B_{V_1}))),$$
which implies
$$\mbox{\rm{vol}}(V^{\nu}_1,(K_{Y'}+M_{Y'}+B_{Y'}+D'_1)|_{V^{\nu}_1})\geq \mbox{\rm{vol}}(V_1,K_{V_1}+M_{V_1}+B_{V_1}).$$
Note that the normalization $\nu:V^{\nu}_1\rightarrow V'_1$ is
birational. Thus we have
\begin{eqnarray*}
\mbox{\rm{vol}}(V'_1,(K_{Y'}+M_{Y'}+B_{Y'}+D'_1)|_{V'_1}) &=&
\mbox{\rm{vol}}(V^{\nu}_1,(K_{Y'}+M_{Y'}+B_{Y'}+D'_1)|_{V^{\nu}_1})\\
& \geq & \mbox{\rm{vol}}(V_1,K_{V_1}+M_{V_1}+B_{V_1})\\
&\geq & \delta.
\end{eqnarray*}
\end{proof}

Let $\phi_{V_1}:V_1\to V_1'$ be the restriction of $\phi$ to $V_1$.
We have
$$\phi^*(K_{Y'}+M_{Y'}+B_{Y'})|_{V_1}\sim_{\mathbb{Q}}\phi^*_{V_1}((K_{Y'}+M_{Y'}+B_{Y'})|_{V_1'}).$$
Recall that $D'_1\sim_{\mathbb{Q}}\lambda_1(K_{Y'}+M_{Y'}+B_{Y'})$,
so by Lemma \ref{vol-lamma} it follows that
\begin{eqnarray*}
\mbox{vol}(V_1,\phi^*(K_{Y'}+M_{Y'}+B_{Y'})|_{V_1})&=&\mbox{vol}(V_1',(K_{Y'}+M_{Y'}+B_{Y'})|_{V'_1})\\
&=&\frac{\mbox{vol}(V_1',(K_{Y'}+M_{Y'}+B_{Y'}+D_1')|_{V'_1})}{(1+\lambda_1)^{n_1}}\\
&\geq& \frac{\delta}{(1+\lambda_1)^{n_1}}.
\end{eqnarray*}
Hence for any very general fiber $V_t$ of $\varphi$, we always have
$$\mbox{vol}(V_t,\phi^*(K_{Y'}+M_{Y'}+B_{Y'})|_{V_t})\geq
\delta
 (1+\lambda_1)^{-n_1}.$$ Then for any point $p\in V_t$, there
exists an effective $\mathbb{Q}$-divisor
$E_{t,p}\sim_{\mathbb{Q}}\lambda_{t,p}(\phi^*(K_{Y'}+M_{Y'}+B_{Y'})|_{V_t})$
on $V_t$ such that $\mbox{mult}_pE_{t,p}>n_1$ and
\begin{eqnarray*}
\lambda_{t,p}&<&
\frac{n_1}{\mbox{vol}(V_t,\phi^*(K_{Y'}+M_{Y'}+B_{Y'})|_{V_t})^{1/n_1}}+\varepsilon_1\\
&<&\frac{n_0!n_1}{\delta^{1/n_1}
\mbox{vol}(Y,K_Y+M_Y+B_Y)^{1/n_0}}+(1+\varepsilon_0(n_0-1)!)\frac{n_1}{\delta^{1/n_1}}+\varepsilon_1
\end{eqnarray*}
where $0<\varepsilon_1\ll 1$. This implies that there is a component
of $\mbox{Non-klt}(V_t, E_{t,p})$ passing through $p$\,. Multiplying
$E_{t,p}$ by a positive rational number $\leq 1$, we can assume that
$p$ is contained in a pure log canonical center of $(V_t,E_{t,p})$.

Applying Lemma \ref{family} and Corollary \ref{birational family},
there exists a birational covering family of
$(\Gamma_{t,s},W_{t,s})$ on $V_t$ of weight $w'$ with respect to
$\phi^*(K_{Y'}+M_{Y'}+B_{Y'})|_{V_t}$ such that
$\Gamma_{t,s}\sim_{\mathbb{Q}}(1/w')\phi^*(K_{Y'}+M_{Y'}+B_{Y'})|_{V_t}$
and the image of $W_{t,s}$ on $V_t$ is a pure log canonical center
of $(V_t,\Gamma_{t,s})$, where
$$\frac{1}{w'}<\frac{n_0!n_1!}{\delta^{1/n_1}
\mbox{vol}(Y,K_Y+M_Y+B_Y)^{1/n_0}}+(1+\varepsilon_0(n_0-1)!)\frac{n_1!}{\delta^{1/n_1}}+\varepsilon_1(n_1-1)!.$$

By Lemma \ref{mckernan}, we can find a new birational covering
family of $(D'_s,V''_s)$ on $Y'$ of dimension less than $n_1$ and
weight $w''$ such that
\begin{eqnarray*}
\frac{1}{w''}&=&\lambda_1+\frac{1}{w'}\\
&<&\frac{n_0!n_1!\delta^{-1/n_1}+n_0!}{\mbox{vol}(Y,K_Y+M_Y+B_Y)^{1/n_0}}\\
&&+(1+\varepsilon_0(n_0-1)!)\frac{n_1!}{\delta^{1/n_1}}+\varepsilon_1(n_1-1)!+\varepsilon_0(n_0-1)!.
\end{eqnarray*}
Therefore, we obtain the following diagram
$$
\xymatrix{Y'' \ar[r]^{\phi''} \ar[d]_{\varphi''} & Y'\\
S }$$ where $\phi''$ is birational and $\varphi''$ is surjective.
For the very general point $y'\in Y'$, there are an effective
$\mathbb{Q}$-divisor
$D'_s\sim_{\mathbb{Q}}\lambda_2(K_{Y'}+M_{Y'}+B_{Y'})$ on $Y'$ with
$\lambda_2=1/w''$ and a very general fiber $V''_s$ of $\varphi''$
such that $V'_2=\phi''(V''_s)$ is a pure log canonical center of
$(Y',M_{Y'}+B_{Y'}+D'_s)$ at $y'$ with dim$V'_2<\mbox{dim}V'_1=n_1$.
Replacing $Y''$ with the common higher smooth model of $Y,Y'$ and
$Y''$, we can assume that $Y''$ is smooth and the dimension of any
very general fiber of $\varphi'': Y''\to S$ is strictly less than
that of $\varphi: Y\to T$. The moduli part $M_{Y''}$ on $Y''$ is
still $\mathbb{Q}$-linearly trivial, since it is the pullback of
$M_Y$.

Repeating above procedure at most $n'-1$ times, there exists an
effective $\mathbb{Q}$-divisor
$D'\sim_{\mathbb{Q}}\lambda(K_{Y'}+M_{Y'}+B_{Y'})$ on $Y'$ with
$\lambda< \alpha(\mbox{\rm{vol}}(Y,K_Y+M_Y+B_Y))^{-1/n'}+\beta$,
where $\alpha$ and $\beta$ depend only on $n,k$ and $b$, such that
$y'$ is a pure log canonical center of $(Y',M_{Y'}+B_{Y'}+D')$. By
the standard tie-breaking technique, we can assume that $y'$ is the
unique non-klt center of $(Y',M_{Y'}+B_{Y'}+D')$ on a neighborhood
of $y'$, i.e. $y'$ is an isolated point of
Non-klt$(Y',M_{Y'}+B_{Y'}+D')$. Since $Y'$ and $Y$ are birational,
there is a unique effective $\mathbb{Q}$-divisor
$D_y\sim_{\mathbb{Q}}\lambda(K_Y+M_Y+B_Y)$ on $Y$ such that
$\phi_*(D_y)=D'$. Then $D_y$ satisfies the requirements in
Proposition \ref{isolated}. This completes the proof.
\end{proof}

\begin{remark}\label{no1} If we assume Theorem \ref{main thm} without the
hypothesis (1) holds for varieties of dimension $<n$ (i.e. we do not
assume that $M_Y \sim_{\mathbb{Q}}0$), then for any Iitaka fibration
$f: X\to Y$ satisfying the hypotheses (2) and (3) of Theorem
\ref{main thm} with $\mbox{dim}X=n$ and $\mbox{dim}Y=n'$, the
conclusion of Proposition \ref{isolated} still holds. Therefore, if
Theorem \ref{n-1 thm} holds for varieties of dimension $<n$, then
for any $n$-dimensional variety $X$ of Kodaira dimension $n-1$ with
Iitaka fibration $f:X\to Y$, there exist positive constants $\alpha$
and $\beta$ depending only on $n$ such that for any very general
point $y\in Y$ there is an effective $\mathbb{Q}$-divisor $D_y$
satisfying (1) and (2) of Proposition \ref{isolated}.
\end{remark}

\section{Proof of \ref{main thm} and \ref{n-1 thm}}
\begin{lemma}\label{global sections}
Let $f:X\rightarrow Y$ be the Iitaka fibration satisfying the
hypotheses of Theorem \ref{main thm}. Let $m_0$ be a positive
integer and assume that for any very general point $y\in Y$, there
exists an effective $\mathbb{Q}$-divisor
$D_y\sim_{\mathbb{Q}}\lambda(K_Y+M_Y+B_Y)$ where $\lambda\leq
m_0-1$, such that $y$ is an isolated point in
\mbox{\rm{Non-klt}}$(Y,D_y)$. Then for all $m\geq m_0$ such that
$mM_Y$ is an integral divisor, i.e. $m$ is divisible by $bN$, we
have $h^0(X,\mathcal{O}_X(mK_X))>0$ and moreover, if $m\geq 2m_0$,
then $h^0(X,\mathcal{O}_X(mK_X))\geq 2$.
\end{lemma}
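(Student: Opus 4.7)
The plan is to reduce to producing global sections on $Y$ via the Canonical Bundle Formula, and then to apply Nadel vanishing with a carefully chosen multiplier ideal so that a skyscraper contribution at $y$ (or at a pair $y_1,y_2$) lifts to a global section of $\mathcal{O}_Y(\lfloor m(K_Y+M_Y+B_Y)\rfloor)$. For any $m$ divisible by $bN$, Lemma~\ref{vz}(4)--(5) give an isomorphism $H^0(X,\mathcal{O}_X(mK_X))\cong H^0(Y,\mathcal{O}_Y(L))$ with $L:=\lfloor m(K_Y+M_Y+B_Y)\rfloor=mK_Y+mM_Y+\lfloor mB_Y\rfloor$ Cartier; writing $L=K_Y+L'$ and using Lemma~\ref{vz}(6) I obtain
\[
L'-D_y\;\sim_{\mathbb{Q}}\;(m-1-\lambda)(K_Y+M_Y+B_Y)+M_Y+\bigl(\lfloor mB_Y\rfloor-(m-1)B_Y\bigr),
\]
in which the parenthesised tail is effective.

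Since $K_Y+M_Y+B_Y$ is big (Lemma~\ref{vz}(3)), fix once and for all a decomposition $c(K_Y+M_Y+B_Y)\sim_{\mathbb{Q}} A+E$ with $A$ ample, $E$ effective and $c>0$, and take the very general point $y$ to avoid $\mathrm{Supp}(B_Y)\cup\mathrm{Supp}(E)$. Set
\[
D\;:=\;D_y+\tfrac{m-1-\lambda}{c}E+\bigl(\lfloor mB_Y\rfloor-(m-1)B_Y\bigr),
\]
an effective $\mathbb{Q}$-divisor whose added terms vanish on a neighbourhood of $y$; hence $y$ remains an isolated point of the non-klt locus of $(Y,D)$, while
\[
L'-D\;\sim_{\mathbb{Q}}\;\tfrac{m-1-\lambda}{c}A+M_Y
\]
is ample plus nef by Lemma~\ref{vz}(5), hence nef and big for $m\ge m_0$. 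Nadel vanishing then gives $H^1\bigl(Y,\mathcal{J}(Y,D)\otimes\mathcal{O}_Y(K_Y+L')\bigr)=0$, and the sequence $0\to\mathcal{J}(Y,D)\to\mathcal{O}_Y\to\mathcal{O}_Y/\mathcal{J}(Y,D)\to 0$ twisted by $\mathcal{O}_Y(L)$ yields a surjection
\[
H^0(\mathcal{O}_Y(L))\twoheadrightarrow H^0\bigl((\mathcal{O}_Y/\mathcal{J}(Y,D))\otimes\mathcal{O}_Y(L)\bigr).
\]
Because $y$ is an isolated point of the cosupport of $\mathcal{J}(Y,D)$, the target has a nonzero skyscraper direct summand at $y$, which lifts to a nonzero global section; applying the Canonical Bundle Formula isomorphism yields $h^0(X,\mathcal{O}_X(mK_X))\ge 1$.

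For the bound $h^0\ge 2$ when $m\ge 2m_0$, I run the same construction with two distinct very general points $y_1,y_2$ and with $D_y$ replaced by $D_{y_1}+D_{y_2}$; the total weight $\lambda_1+\lambda_2\le 2(m_0-1)\le m-1$ is exactly what keeps the analogous decomposition of $L'-D$ nef and big. The cosupport of the resulting multiplier ideal then contains two disjoint isolated points, giving two independent skyscraper summands in the quotient, whose lifts are two linearly independent sections of $\mathcal{O}_Y(L)$, and hence of $\mathcal{O}_X(mK_X)$.

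The principal technical hurdle is ensuring that $L'-D$ is genuinely nef and big: the divisor $K_Y+M_Y+B_Y$ is only big on $Y$ (ampleness holds only on the log canonical model), and the fractional-part correction $\lfloor mB_Y\rfloor-(m-1)B_Y$ is effective but not nef. Both obstructions get absorbed into the multiplier-ideal divisor $D$ precisely because their supports lie inside $\mathrm{Supp}(B_Y)\cup\mathrm{Supp}(E)$, which the very-general choice of $y$ (or of $y_1,y_2$) keeps disjoint from the point(s) where the isolated non-klt structure must be preserved---this is the sole role of the ``very general'' quantifier. The remaining delicate edge case $\lambda=m_0-1,\;m=m_0$, in which the $A$-coefficient collapses, can be rescued by inserting into $D$ a small effective representative of a tiny ample class chosen to miss $y$, at the cost of a harmless perturbation of the absorbed effective contribution.
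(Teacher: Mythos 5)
Your proof follows essentially the same route as the paper's: reduce to $Y$ via the canonical bundle formula, absorb an effective piece of the big divisor $K_Y+M_Y+B_Y$ (the paper writes $K_Y+M_Y+B_Y\sim_{\mathbb{Q}}H+G$ and uses $D'_y=D_y+(m-1-\lambda)G+\lfloor mB_Y\rfloor-(m-1)B_Y$) together with the rounding term into the multiplier-ideal divisor at a very general $y$ off its support, apply Nadel vanishing to get surjectivity onto the skyscraper at the isolated non-klt point, and use two points $y_1,y_2$ with $D_{y_1}+D_{y_2}$ for $h^0\geq 2$, so it is correct in the same sense and to the same extent as the paper's argument. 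The only divergence is your closing remark on the boundary case $m-1-\lambda=0$: the rescue you propose (inserting an extra effective ample representative into $D$) goes the wrong way, since enlarging $D$ only decreases $L'-D$; the paper simply assumes, as you do in the main argument, that $(m-1-\lambda)H+M_Y$ is ample, i.e.\ implicitly that $\lambda<m-1$.
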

\begin{proof}
Since $K_Y+M_Y+B_Y$ is big, there exist an ample
$\mathbb{Q}$-divisor $H$ and an effective $\mathbb{Q}$-divisor $G$
on $Y$ such that $K_Y+M_Y+B_Y\sim_{\mathbb{Q}}H+G$. Pick a very
general point $y\in Y$ not contained in the support of $G+B_Y$. By
Lemma \ref{vz}, the divisor $(\lfloor mB_Y\rfloor-(m-1)B_Y)$ is
effective. Let $D'_y=D_y+(m-1-\lambda)G+\lfloor
mB_Y\rfloor-(m-1)B_Y$. Then
$$
\lfloor m(K_Y+M_Y+B_Y) \rfloor - K_Y-D'_y\sim_{\mathbb{Q}}
(m-1-\lambda)H+M_Y$$ is ample so that $H^1(Y,\mathcal{O}_Y(\lfloor
m(K_Y+M_Y+B_Y) \rfloor)\otimes \mathcal{J}(Y,D'_y))=0$.

Consider the short exact sequence of coherent sheaves on $Y$
$$0\rightarrow \mathcal{O}_Y(\lfloor
m(K_Y+M_Y+B_Y) \rfloor)\otimes \mathcal{J}(Y,D'_y) \rightarrow
\mathcal{O}_Y(\lfloor m(K_Y+M_Y+B_Y) \rfloor) \rightarrow
\mathcal{Q} \rightarrow 0$$ where $\mathcal{Q}$ denotes the
corresponding quotient. By the discussion above, the map
$$H^0(Y,\mathcal{O}_Y(\lfloor m(K_Y+M_Y+B_Y) \rfloor))\rightarrow H^0(Y,\mathcal{Q})$$
is surjective. Since $y$ is an isolated point in Non-klt$(Y,D'_y)$,
$\mathbb{C}_y$ is a direct summand of $H^0(Y,\mathcal{Q})$. Thus, we
have
$$
h^0(X,\mathcal{O}_X(mK_X))=h^0(Y,\mathcal{O}_Y(\lfloor
m(K_Y+M_Y+B_Y) \rfloor))>0.$$

Pick a very general point $y_1\in Y$. Then there is an effective
$\mathbb{Q}$-divisor $D_{y_1}\sim_{\mathbb{Q}}\lambda(K_Y+M_Y+B_Y)$
such that $y_1$ is an isolated point in Non-klt$(Y,D_{y_1})$. Now we
may pick a very general point $y_2\in Y$ not contained in the
support of $D_{y_1}$, and pick a very general divisor
$D_{y_2}\sim_{\mathbb{Q}}\lambda(K_Y+M_Y+B_Y)$ such that $y_2$ is an
isolated point in Non-klt$(Y,D_{y_2})$ and $y_1$ is not contained in
the support of $D_{y_2}$. Hence $y_1$ and $y_2$ are isolated points
in Non-klt$(Y,D_{y_1}+D_{y_2})$. Then
$h^0(X,\mathcal{O}_X(mK_X))\geq 2$ by an argument similar to the
discussion above.
\end{proof}

\begin{lemma}\label{birational map}
Let $f:X\rightarrow Y$ be the Iitaka fibration satisfying the
hypotheses of Theorem \ref{main thm}. Let $m'_0$ be a positive
integer divisible by $bN$. Assume that $h^0(X,mK_X)\geq 2$ for all
$m \geq m'_0$ such that $m$ is divisible by $bN$. Let $X'\rightarrow
Y'\rightarrow \mathbb{P}^1$ be any morphism induced by sections of
$\mathcal{O}_X(m_0'K_X)$ on an appropriate birational model
$f':X'\rightarrow Y'$ of $f:X\rightarrow Y$. Let $p\in \mathbb{P}^1$
be a very general point. $f_W:W\rightarrow V$ denotes the
restriction of $f'$ to the fiber over $p$. If there is a positive
integer $s$ divisible by $bN$ such that $|sK_W|$ induces the Iitaka
fibration for any very general point $p$, then $|tK_X|$ induces the
Iitaka fibration for all $t\geq m'_0(2s+2)+s$ such that $t$ is
divisible by $bN$.
$$
\xymatrix{ W \ar[d]^{f_W} \ar @{^{(}->}[r] &X' \ar[d]^{f'} \ar[r] & X \ar[d]^f\\
V \ar[d] \ar @{^{(}->} [r] &Y' \ar[d] \ar [r] &  Y\\
p \ar @{} [r]|{\in} & \mathbb{P}^1 }$$
\end{lemma}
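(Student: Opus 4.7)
The plan is to translate the statement to a question on $Y$ via the canonical bundle formula and then handle it by a pencil argument. By Lemma \ref{vz} it suffices to show that $|tD|$ is birational on $Y$, where $D:=K_Y+M_Y+B_Y$. Since $H^0(X,m_0'K_X)\cong H^0(Y,m_0'D)$, the morphism $X'\to Y'\to\mathbb{P}^1$ is, birationally, induced by the pencil $|m_0'D|$ on $Y$; its very general fiber $V=f'(W)$ satisfies $D|_V\sim_{\mathbb{Q}}K_V+M_V+B_V$ by Lemma \ref{restriction}, and by Lemma \ref{vz} applied to $f_W:W\to V$ the hypothesis that $|sK_W|$ induces the Iitaka fibration on $W$ translates into: $|sD|_V$ is birational on $V$.

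I will then show that $|tD|$ separates two very general points $y_1,y_2\in Y$ by splitting into cases. In the first case, when $y_1$ and $y_2$ lie in distinct fibers $V_1,V_2$ of the pencil, I will take $\sigma\in H^0(Y,m_0'D)$ vanishing along $V_1$ but not at $y_2$ and multiply by a general $\eta\in H^0(Y,(t-m_0')D)$ nonzero at $y_2$. Such an $\eta$ exists because, for $t-m_0'\geq m_0'$, the hypothesis gives $h^0(X,(t-m_0')K_X)\geq 2$, hence $h^0(Y,(t-m_0')D)\geq 2$, and a generic section of a line bundle is nonzero at a very general point.

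In the second case, when $y_1,y_2$ both lie in a common very general fiber $V$, I will choose $\theta\in H^0(V,sD|_V)$ separating them (by the translated hypothesis) and then lift a suitable product $\theta\cdot\eta$, with $\eta\in H^0(V,(t-s)D|_V)$ nonzero at $y_2$, to a section of $tD$ on $Y$ via the exact sequence
\[
0 \to \mathcal{O}_Y((t-m_0')D) \to \mathcal{O}_Y(tD) \to \mathcal{O}_V(tD|_V) \to 0
\]
arising from $V\sim m_0'D$. Surjectivity of restriction will reduce to vanishing of $H^1(Y,(t-m_0')D)$, which I will obtain by passing to the log canonical model $Y^c$ of $(Y,M_Y+B_Y)$ furnished by Lemma \ref{canonical}, on which $K_{Y^c}+M_{Y^c}+B_{Y^c}$ is ample and $(Y^c,B_{Y^c})$ is klt, and invoking Kawamata--Viehweg. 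The factor $2s+2$ in the bound will emerge from bookkeeping the tie-breaking needed to keep the lifted section vanishing at $y_1$ but not at $y_2$, the divisibility constraint $bN\mid t$, and the threshold at which vanishing on $Y^c$ applies.

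The main obstacle is the lifting step in the second case. I must carry out Kawamata--Viehweg cleanly on $Y^c$ and then descend back to $Y$ to obtain the surjectivity of restriction on the original model, while tracking how the only-nefness of $M_Y$ and the fractional coefficients of $B_Y$ affect the threshold. Once that vanishing is established, the remaining combinatorics are routine, and I expect the explicit bound $m_0'(2s+2)+s$ to drop out by choosing minimal admissible parameters in each of Case A, the lifting, and the multiplication by $\eta$.
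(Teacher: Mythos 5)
Your plan diverges from the paper's proof, which is a one--line reduction: the paper invokes Koll\'ar's theorem on higher direct images of dualizing sheaves ([Kol86, Theorem 4.6] and its proof) applied to the pencil $X'\to\mathbb{P}^1$, obtaining that $|(m_0'(2s+1)+s)K_X|$ already gives the Iitaka fibration, and then adds effective pluricanonical divisors (available since $h^0(X,mK_X)>0$ for $bN\mid m$, $m\ge m_0'$) to reach every $t\ge m_0'(2s+2)+s$ divisible by $bN$. Your two--case separation argument on $Y$ is a reasonable alternative skeleton, but the step you yourself call ``the main obstacle'' --- the lifting in Case 2 --- has a genuine gap, and it is precisely the step that Koll\'ar's injectivity/torsion--freeness results for $f_*\omega_X^{\otimes m}$ over $\mathbb{P}^1$ are designed to handle.

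Concretely: the fiber $V$ is a member of the \emph{moving} part of $|\lfloor m_0'D\rfloor|$ on a model $Y'$ on which the pencil is free, so $\lfloor m_0'D_{Y'}\rfloor\sim V+F$ with $F\ge 0$ the fixed divisor; your sequence with ``$V\sim m_0'D$'' is already off by $F$. On $Y'$ the divisor $D=K_{Y'}+M_{Y'}+B_{Y'}$ is big but not nef, so Kawamata--Viehweg gives no vanishing of $H^1(Y',\lfloor tD\rfloor-V)$. Passing to the log canonical model $Y^c$ makes $D$ ample but reintroduces the base locus: there the relevant member is $\bar V+\bar F\in|\lfloor m_0'D_{Y^c}\rfloor|$ with $\bar F\ge 0$ uncontrolled, and in the needed twist $\lfloor tD_{Y^c}\rfloor-\bar V\sim\lfloor tD_{Y^c}\rfloor-\lfloor m_0'D_{Y^c}\rfloor+\bar F$ the divisor $\bar F$ can be absorbed neither into the nef-and-big part (ample plus effective is big but not nef) nor into the boundary (there is no bound on the multiplicities of $\bar F$, so $(Y^c,B_{Y^c}+\bar F+\{\,\cdot\,\})$ need not be klt); in addition $Y^c$ need not be $\mathbb{Q}$-factorial, so even writing the restriction sequence requires care. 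Hence the proposed Kawamata--Viehweg vanishing is not available by the stated means, and no substitute mechanism is given. The standard fix is exactly the one the paper uses: work on $X$ over $\mathbb{P}^1$, where $f_*\omega_{X'}^{\otimes m}$ splits into line bundles and Koll\'ar's results, after twisting by roughly $2s+1$ fibers (i.e.\ adding $m_0'(2s+1)$ to the pluricanonical multiple), give the surjectivity of restriction to a very general fiber; this is where the bound $m_0'(2s+1)+s$, and hence $m_0'(2s+2)+s$, comes from. Your Case 1 and the translation to $Y$ via Lemma \ref{vz} are fine, but as written the proof of the crucial lifting does not go through.
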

\begin{proof}
Following [Kol86, Theorem 4.6] and its proof, $|(m'_0(2s+1)+s)K_X|$
gives the Iitaka fibration. Since $mK_X$ is effective for all $m\geq
m_0'$ such that $m$ is divisible by $bN$, the assertion follows.
\end{proof}

\begin{proof}[Proof of Theorem \ref{main thm}]

 Since the moduli
part is $\mathbb{Q}$-linearly trivial by Theorem \ref{tfae}, we
always have
$\mbox{\rm{vol}}(Y,K_Y+M_Y+B_Y)=\mbox{\rm{vol}}(Y,K_Y+B_Y)$. The
proof is by induction on the dimension of $X$. It is well known that
the theorem holds for $n = 1$. Assume that the
theorem holds when $\mbox{\rm{dim}}X\leq n-1$. %Then there exists a
%positive number $\delta_{n-1}$, such that
%$$\mbox{\rm{vol}}(V,K_V+M_V+B_V)=\mbox{vol}(V,K_V + B_V )\geq \delta_{n-1}$$
% for all Iitaka firations $f_W : W \rightarrow V$
%satisfying the hypotheses of Theorem \ref{main thm} with dim$W \leq
%n -1$.
Let $f : X \rightarrow Y$ be the Iitaka firation satisfying the
hypotheses of Theorem \ref{main thm} with dim$X = n$ and dim$Y=n'$.
By Proposition \ref{isolated}, for any very general point $y\in Y$ ,
there exists an effective $\mathbb{Q}$-divisor $D_y
\sim_{\mathbb{Q}} \lambda (K_Y + M_Y+B_Y )$ with $\lambda<
\alpha(\mbox{vol}(Y,K_Y +M_Y+B_Y ))^{-1/n'} + \beta$, where $\alpha$
and $\beta$ are two positive constants depending only on $n,b$ and
$k$, such that $y$ is an isolated point in Non-klt$(Y,D_ y)$.

 If $\mbox{\rm{vol}}(Y,K_Y+M_Y+B_Y)=\mbox{\rm{vol}}(Y,K_Y  +B_Y) \geq 1$
, Proposition \ref{isolated}, Lemma \ref{global sections} and Lemma
\ref{birational map} imply that there exists an positive integer
$m_n$ only depending on $n,b$ and $k$ such that $mK_X$ gives the
Iitaka fibration if $m \geq m_n$ and divisible by $bN$.

Now we prove the case when
$\mbox{\rm{vol}}(Y,K_Y+M_Y+B_Y)=\mbox{\rm{vol}}(Y,K_Y  +B_Y)< 1$. By
induction, there exists a positive integer $s$ such that $|sK_W|$
gives the Iitaka fibration for all $W$ with dim$W\leq n-1$
satisfying the hypotheses of Theorem \ref{main thm}. By Proposition
\ref{isolated}, Lemma \ref{global sections} and Lemma
\ref{birational map}, \,$|mK_X|$ induces the Iitaka fibration, for
$$m=8bNs\lceil \frac{\alpha}{\mbox{\rm{vol}}(Y,K_Y+M_Y+B_Y)^{1/n'}}+\beta+1 \rceil,$$
so $\phi_{m(K_Y+M_Y+B_Y)}$ gives a birational map. As $mM_Y$ is a
$\mathbb{Q}$-linearly trivial Cartier divisor,
$\phi_{K_Y+(2n'+1)m(K_Y+B_Y)}$ is also birational by Lemma
\ref{d-bir}. We have
\begin{eqnarray*}
\mbox{\rm{vol}}(Y,(2n'+1)m(K_Y+B_Y)) &=&
(2n'+1)^{n'}m^{n'}\mbox{\rm{vol}}(Y,K_Y+B_Y)\\
&\leq& (2n'+1)^{n'}(8bNs)^{n'}(\alpha+\beta+2)^{n'}\\
&\leq& (2n+1)^{n}(8bNs)^{n}(\alpha+\beta+2)^{n}.
\end{eqnarray*}
It follows that there is a constant $A$ such that
vol$(Y,(2n'+1)m(K_Y+B_Y))\leq A$. Then Lemma \ref{vz} and Theorem
\ref{log-bound} imply that the set of such log pairs $(Y,B_Y)$ is
log birationally bounded.

By Theorem \ref{dcc}, there exists a constant $\delta_{n}>0$ such
that
$$\mbox{\rm{vol}}(Y,K_Y+B_Y)\geq \delta_n.$$
So we are done by applying Proposition \ref{isolated}, Lemma
\ref{global sections} and Lemma \ref{birational map} again.
\end{proof}

\begin{proof}[Proof of Theorem \ref{n-1 thm}] By Remark
\ref{no1}, Lemma \ref{global sections}, Lemma \ref{birational map}
and the argument in the proof of Theorem \ref{main thm}, we only
need to show that vol$(Y,K_Y+M_Y+B_Y)$ is bounded from below.

Since Kodaira dimension of $X$ is $n-1$, the general fiber of $f$ is
an elliptic curve. The $j$-invariant defines a rational map
$J:Y\dashrightarrow \mathbb{P}^1$. Replacing $Y$ by a higher model,
we may assume that $J$ is a morphism. Then by \cite[7.16]{ps07}, we
have $$
M_Y\sim_{\mathbb{Q}}\frac{1}{12}J^*(\mathcal{O}_{\mathbb{P}^1}(1)).$$
$J^*(\mathcal{O}_{\mathbb{P}^1}(1))$ is base point free on $Y$.
Picking a general member $D_Y\in
|J^*(\mathcal{O}_{\mathbb{P}^1}(1))|$, we can assume that
$\frac{1}{12}D_Y+B_Y$ is simple normal crossings and
$$K_Y+M_Y+B_Y\sim_{\mathbb{Q}}K_Y+\frac{1}{12}D_Y+B_Y.$$

If
$\mbox{vol}(Y,K_Y+\frac{1}{12}D_Y+B_Y)=\mbox{vol}(Y,K_Y+M_Y+B_Y)\geq
1$, we are done. So we can assume that
$\mbox{vol}(Y,K_Y+\frac{1}{12}D_Y+B_Y)=\mbox{vol}(Y,K_Y+M_Y+B_Y)<
1$. By the same argument in the proof of Theorem \ref{main thm}, it
is easy to see that the set
$\{\mbox{vol}(Y,K_Y+\frac{1}{12}D_Y+B_Y)\}$ satisfies the $DCC$.
Therefore, $\mbox{vol}(Y,K_Y+M_Y+B_Y)$ is bounded from below.
\end{proof}
\begin{remark} In \cite[Conjecture 7.13]{ps07}, Prokhorov and
Shokurov list a series of conjectures concerning the effective
Iitaka fibration problem. If one can prove the effective adjunction
conjecture (\cite[Conjecture 7.13(3)]{ps07}), i.e. there exists a
positive integer $I$ depending only on the dimension of $X$ such
that $IM_Y$ is linearly equivalent to a base point free divisor on
$Y$, then by Remark \ref{no1} and the argument in the proof of 1.3,
one can prove Theorem \ref{main thm} without the hypothesis (1).
\end{remark}

\bibliographystyle{amsalpha}

\end{document}